\let\OLDthebibliography\thebibliography
\renewcommand\thebibliography[1]{
  \OLDthebibliography{#1}
  \setlength{\parskip}{3pt}
  \setlength{\itemsep}{0pt plus 0.3ex}
}
\def\numberlikeadb{\global\def\theequation{\thesection.\arabic{equation}}}
\newtheorem{theorem}{Theorem}[section]
\newtheorem{lemma}[theorem]{Lemma}
\newtheorem{corollary}[theorem]{Corollary}
\newtheorem{definition}[theorem]{Definition}
\newtheorem{proposition}[theorem]{Proposition}
\newtheorem{remark}[theorem]{Remark}
\begin{document}

\title{Wasserstein and Kolmogorov error bounds for variance-gamma approximation via Stein's method I}
\author{Robert E. Gaunt\footnote{School of Mathematics, The University of Manchester, Manchester M13 9PL, UK}}

\date{\today} 
\maketitle

\vspace{-5mm}

\begin{abstract}The variance-gamma (VG) distributions form a four parameter family that includes as special and limiting cases the normal, gamma and Laplace distributions.  Some of the numerous applications include financial modelling and approximation on Wiener space.  Recently, Stein's method has been extended to the VG distribution.  However, technical difficulties have meant that bounds for distributional approximations have only been given for smooth test functions (typically requiring at least two derivatives for the test function).  In this paper, which deals with symmetric variance-gamma (SVG) distributions, and a companion paper \cite{gaunt vgii}, which deals with the whole family of VG distributions, we address this issue.  In this paper, we obtain new bounds for the derivatives of the solution of the SVG Stein equation, which allow for approximations to be made in the Kolmogorov and Wasserstein metrics, and also introduce a distributional transformation that is natural in the context of SVG approximation.  We apply this theory to obtain Wasserstein or Kolmogorov error bounds for SVG approximation in four settings: comparison of VG and SVG distributions, SVG approximation of functionals of isonormal Gaussian processes, SVG approximation of a statistic for binary sequence comparison, and Laplace approximation of a random sum of independent mean zero random variables. 
\end{abstract}

\noindent{{\bf{Keywords:}}} Stein's method;
variance-gamma approximation;
distributional transformation;
rate of convergence 

\noindent{{{\bf{AMS 2010 Subject Classification:}}} Primary 60F05; 62E17

\section{Introduction}

\subsection{Overview of Stein's method for variance-gamma approximation}

The variance-gamma (VG) distribution with parameters $r > 0$, $\theta \in \mathbb{R}$, $\sigma >0$, $\mu \in \mathbb{R}$ has probability density function
\begin{equation}\label{vgdefn}p(x) = \frac{1}{\sigma\sqrt{\pi} \Gamma(\frac{r}{2})} \mathrm{e}^{\frac{\theta}{\sigma^2} (x-\mu)} \bigg(\frac{|x-\mu|}{2\sqrt{\theta^2 +  \sigma^2}}\bigg)^{\frac{r-1}{2}} K_{\frac{r-1}{2}}\bigg(\frac{\sqrt{\theta^2 + \sigma^2}}{\sigma^2} |x-\mu| \bigg), 
\end{equation}
where $x \in \mathbb{R}$ and the modified Bessel function of the second kind $K_\nu(x)$ is defined in Appendix \ref{appendix}. If the random variable $Z$ has density (\ref{vgdefn}), we write $Z\sim\mathrm{VG}(r,\theta,\sigma,\mu)$.  The support of the VG distributions is $\mathbb{R}$ when $\sigma>0$, but in the limit $\sigma\rightarrow 0$ the support is the region $(\mu,\infty)$ if $\theta>0$, and is $(-\infty,\mu)$ if $\theta<0$.  Alternative parametrisations are given in \cite{eberlein} and \cite{kkp01} (in which they use the name generalized Laplace distribution).  Distributional properties are given in \cite{gaunt vg} and Chapter 4 of the book \cite{kkp01}.  


The VG distribution was introduced to the financial literature by \cite{madan}.  Due to their semi-heavy tails, VG distributions are useful for modelling financial data \cite{madan2}; see the book \cite{kkp01} and references therein for an overview of the many applications.  The class of VG distributions contain many classical distributions as special or limiting cases, such as the normal, gamma, Laplace, product of zero mean normals and difference of gammas (see Proposition 1.2 of \cite{gaunt vg} for a list of further cases).  Consequently, the VG distribution appears in many other settings beyond financial mathematics \cite{kkp01}; for example, in alignment-free sequence comparison \cite{lippert, waterman}.  In particular, starting with the works \cite{gaunt thesis, gaunt vg}, Stein's method \cite{stein} has been developed for VG approximation.  The theory of \cite{gaunt thesis, gaunt vg} and the Malliavin-Stein method (see \cite{np12}) was applied by \cite{eichelsbacher} to obtain ``six moment" theorems for the VG approximation of double Wiener-It\^{o} integrals.  Further VG approximations are given in \cite{aaps17} and \cite{bt17}, in which the limiting distribution is the difference of two centered gamma random variables.

Introduced in 1972, Stein's method  \cite{stein} is a powerful tool for deriving distributional approximations with respect to a probability metric.  The theory for normal and Poisson approximation is particularly well established with numerous application in probability and beyond; see the books \cite{chen} and \cite{bhj92}.  There is active research into the development of Stein's method for other distributional limits (see \cite{ley} for an overview), and Stein's method for exponential and geometric approximation, for example, is now also well developed; see the survey \cite{ross}.  In particular, \cite{pekoz1} have developed a framework to obtain error bounds for the Kolmogorov and Wasserstein distance metrics for exponential approximation, and \cite{prr13} developed a framework for total variation error bounds for geometric approximation.  

This paper and its companion \cite{gaunt vgii} focuses on the development of Stein's method for VG approximation.  At the heart of the method \cite{gaunt vg} is the Stein equation
\begin{equation}\label{377}\sigma^2(x-\mu)f''(x)+(\sigma^2r+2\theta (x-\mu))f'(x)+(r\theta-(x-\mu))f(x)=\tilde{h}(x),
\end{equation}
where $\tilde{h}(x)=h(x)-\mathbb{E}h(Z)$ for $h:\mathbb{R}\rightarrow\mathbb{R}$ and $Z\sim\mathrm{VG}(r,\theta,\sigma,\mu)$.  Together with the Stein equations of \cite{pekoz} and \cite{pike}, this was one of the first second order Stein equations to appear in the literature.  We now set $\mu=0$; the general case follows from the translation property that if $Z\sim\mathrm{VG}(r,\theta,\sigma,\mu)$ then $Z-\mu\sim\mathrm{VG}(r,\theta,\sigma,0)$.  The solution to (\ref{377}) is then
\begin{align}  f_h(x) &=-\frac{\mathrm{e}^{-\theta x/\sigma^2}}{\sigma^2|x|^{\nu}}K_{\nu}\bigg(\!\frac{\sqrt{\theta^2+\sigma^2}}{\sigma^2}|x|\!\bigg)\! \int_0^x \! \mathrm{e}^{\theta t/\sigma^2} |t|^{\nu} I_{\nu}\bigg(\!\frac{\sqrt{\theta^2+\sigma^2}}{\sigma^2}|t|\!\bigg) \tilde{h}(t) \,\mathrm{d}t \nonumber \\
\label{vgsolngeneral0} &\quad-\frac{\mathrm{e}^{-\theta x/\sigma^2}}{\sigma^2|x|^{\nu}}I_{\nu}\bigg(\!\frac{\sqrt{\theta^2+\sigma^2}}{\sigma^2}|x|\!\bigg)\! \int_x^{\infty}\! \mathrm{e}^{\theta t/\sigma^2} |t|^{\nu} K_{\nu}\bigg(\!\frac{\sqrt{\theta^2+\sigma^2}}{\sigma^2}|t|\!\bigg)\tilde{h}(t)\,\mathrm{d}t,
\end{align}
where $\nu=\frac{r-1}{2}$ and the modified Bessel function of the first kind $I_\nu(x)$ is defined in Appendix \ref{appendix}.  If $h$ is bounded, then $f_h(x)$ and $f_h'(x)$ are bounded for all $x\in\mathbb{R}$.  Moreover, this is the unique bounded solution when $r\geq1$.

To approximate a random variable of interest $W$ by a VG random variable $Z$, one may evaluate both sides of (\ref{377}) at $W$, take expectations and finally take the supremum of both sides over a class of functions $\mathcal{H}$ to obtain
\begin{equation}\label{177}\sup_{h\in\mathcal{H}}|\mathbb{E}h(W)-\mathbb{E}h(Z)|
=\sup_{h\in\mathcal{H}}\big|\mathbb{E}\big[\sigma^2Wf_h''(W)+(\sigma^2r+2\theta W)f_h'(W)-(r\theta-W)f_h(W)\big]\big|.
\end{equation}
Many important probability metrics are of the form $\sup_{h\in\mathcal{H}}|\mathbb{E}h(W)-\mathbb{E}h(Z)|$.  In particular, taking
\begin{align*}\mathcal{H}_{\mathrm{K}}&=\{\mathbf{1}(\cdot\leq z)\,|\,z\in\mathbb{R}\}, \\
\mathcal{H}_{\mathrm{W}}&=\{h:\mathbb{R}\rightarrow\mathbb{R}\,|\,\text{$h$ is Lipschitz, $\|h'\|\leq1$}\}, \\
\mathcal{H}_{\mathrm{BW}}&=\{h:\mathbb{R}\rightarrow\mathbb{R}\,|\,\text{$h$ is Lipschitz, $\|h\|\leq1$ and $\|h'\|\leq1$}\}
\end{align*}
gives the Kolmogorov, Wasserstein and bounded Wasserstein distances, which we denote by $d_{\mathrm{K}}$, $d_{\mathrm{W}}$ and $d_{\mathrm{BW}}$, respectively.

The problem of bounding $\sup_{h\in\mathcal{H}}|\mathbb{E}h(W)-\mathbb{E}h(Z)|$ is thus reduced to bounding the solution (\ref{vgsolngeneral0}) and some of its lower order derivatives and bounding the expectation on the right-hand side of (\ref{177}).  To date, the only techniques for bounding this expectation for VG approximation are local couplings \cite{gaunt thesis, gaunt vg} and the integration by parts technique used to prove Theorem 4.1 of \cite{eichelsbacher}.  Other coupling techniques that are commonly found in the Stein's method literature, such as exchangeable pairs \cite{stein2} and Stein couplings \cite{cr10}, have yet to be used in VG approximation, although one of the contributions of this paper is a new coupling technique for SVG approximation by Stein's method.

The presence of modified Bessel functions in the solution (\ref{vgsolngeneral0}) together with the singularity at the origin in the Stein equation (\ref{377}) makes bounding the solution and its derivatives technically challenging.  Indeed, in spite of the introduction of new inequalities for modified Bessel functions and their integrals \cite{gaunt ineq1, gaunt ineq2} and extensive calculations (\cite{gaunt thesis}, Section 3.3 and Appendix D), the first bounds given in the literature \cite{gaunt vg} were only given for the case $\theta=0$ and had a far from optimal dependence on the parameter $r$.  Substantial progress was made by \cite{dgv15}, in which their iterative approach reduced the problem of bounding the derivatives of any order to bounding just the solution and its first derivative.  However, the bounds obtained in \cite{dgv15} have a dependence on the test function $h$ which means that error bounds for VG approximation can only be given for smooth test functions.  

\subsection{Summary of results and outline of the paper}

In this paper and its companion \cite{gaunt vgii}, we obtain new bounds for the solution of the VG Stein equation that allow for Wasserstein and Kolmogorov error bounds for VG approximation via Stein's method.  This paper focuses on the case $\theta=0$ (symmetric variance-gamma (SVG) distributions), while \cite{gaunt vgii} deals with the whole family of VG distributions.  This organisation is due to the additional complexity of the $\theta\not=0$ case.  One of the difficulties is that when $\theta\not=0$, the inequalities for expressions involving integrals  of modified Bessel functions that we use to bound the solution take a more complicated form, meaning our main results need to be presented in parallel for the two cases.  It should be noted, though, that, once the inequalities for modified Bessel functions have been established (which has now been done in \cite{gaunt ineq1, gaunt ineq2, gaunt ineq3}), the intrinsic difficulty of bounding the derivatives of the solution of the Stein equation in the two cases is similar.  This organisation allows for a clear exposition with manageable calculations.


In Section \ref{sec3}, we obtain new bounds for the solution of the SVG Stein equation (Theorem \ref{thm1} and Corollary \ref{cor345}) that have the correct dependence on the test function $h$ to allow for Wasserstein ($\|h'\|$) and Kolmogorov ($\|\tilde{h}\|$) error bounds for SVG approximation via Stein's method.  This task is arguably more technically demanding than for any other distribution for which this ingredient of Stein's method has been established.  Indeed, Theorem \ref{thm1} builds on the bounds of \cite{gaunt thesis, gaunt vg}, the iterative technique of \cite{dgv15}, and three papers on inequalities for integrals of modified Bessel functions \cite{gaunt ineq1, gaunt ineq2, gaunt ineq3} whose primary motivation was Stein's method for VG approximation.  In Propositions \ref{disclem} and \ref{ptpt} we note that higher order derivatives of the solution cannot have a dependence on $h$ of the form $\|\tilde{h}\|$ or $\|h'\|$.

In Section \ref{sec4}, we introduce (Definition \ref{def123}) a distributional transformation, which we call the \emph{centered equilibrium transformation of order $r$}, that is natural in the context of SVG approximation via Stein's method.  As our choice of name suggests, it generalises the centered equilibrium transformation \cite{pike}, which is itself the natural analogue for Laplace approximation of the equilibrium transformation for exponential approximation \cite{pekoz1}.    In Theorem \ref{jazzz}, we combine with the bounds of Section \ref{sec3} to obtain general Wasserstein and Kolmogorov error bounds for SVG approximation.  Our bounds are the SVG analogue of the general bounds of Theorem 3.1 of \cite{pekoz1} that have been shown to be a useful tool for obtaining bounds for exponential approximation.

It should be noted that even with the new bounds of Section \ref{sec3}, with other coupling techniques, such as local couplings, more effort may be required to obtain Wasserstein and Kolmogorov bounds than would be the case for normal approximation, for example.  This is due to the presence of the coefficient $\sigma^2x$ in the leading derivative of the SVG Stein equation (\ref{377}).  This therefore provides motivation for introducing this distributional transformation.

In Section \ref{sec5}, we apply the results of Sections \ref{sec3} and \ref{sec4} in four applications, these being: approximation of a general VG distribution by a SVG distribution; quantitative six moment theorems for SVG approximation of double Wiener-It\^{o} integrals; SVG approximation of a statistic for binary sequence comparison (a special case of the $D_2$ statistic for alignment-free sequence comparison \cite{bla, lippert}); and Laplace approximation of a random sum of independent mean zero random variables.  Our error bounds are given in the Wasserstein and Kolmogorov metrics, and in each case such bounds would not have been attainable by appealing to the present literature.

The rest of this paper is organised as follows.  In Section \ref{sec2}, we introduce the class of SVG distributions and state some of their basic properties.  Section \ref{sec3} gives new bounds for the solution of the SVG Stein equation.  In Section \ref{sec4}, we introduce a new distributional transformation, which we apply to give general bounds for SVG approximation in the Wasserstein and Kolmogorov metrics.  In Section \ref{sec5}, we apply our results to obtain SVG approximations in several applications.  Proofs of technical results are postponed until Section \ref{sec6}.  Basic properties and inequalities for modified Bessel functions that are needed in this paper are collected in Appendix \ref{appendix}.

\section{The class of symmetric variance-gamma distributions}\label{sec2}

In this section, we introduce the class of symmetric variance-gamma (SVG) distributions and present some of their basic properties.  

\begin{definition}If $Z\sim \mathrm{VG}(r,0,\sigma,\mu)$, for $r$, $\sigma$ and $\mu$ defined as in (\ref{vgdefn}), then $Z$ is said to have a \emph{symmetric variance-gamma} distribution.  We write $Z\sim \mathrm{SVG}(r,\sigma,\mu)$.
\end{definition}

Setting $\theta=0$ in (\ref{vgdefn}) gives the p.d.f$.$ of $Z\sim \mathrm{SVG}(r,\sigma,\mu)$:
\begin{equation}\label{svgdefn}p(x) = \frac{1}{\sigma\sqrt{\pi} \Gamma(\frac{r}{2})}  \bigg(\frac{|x-\mu|}{2\sigma}\bigg)^{\frac{r-1}{2}} K_{\frac{r-1}{2}}\bigg(\frac{|x-\mu|}{\sigma} \bigg), \quad x\in\mathbb{R},
\end{equation}
where $K_\nu(x)$ is a modified Bessel function of the second kind. The parameter $r$ is known as the \emph{scale} parameter. As $r$ increases, the distribution becomes more rounded around its peak value $\mu$ (as can be seen from (\ref{pmutend}) below). The parameter $\sigma$ is called the \emph{tail} parameter. As $\sigma$ decreases, the tails decay more quickly (see (\ref{pjv})).  The parameter $\mu$ is the \emph{location} parameter. Calculations can often be simplified by using the basic relation that if $Z\sim \mathrm{SVG}(r,1,0)$, then $\sigma Z+\mu \sim \mathrm{SVG}(r,\sigma,\mu)$.  The $\mathrm{SVG}(r,1,0)$ distribution is in a sense the \emph{standard} symmetric variance-gamma distribution.

The presence of the modified Bessel function makes (\ref{svgdefn}) difficult to parse at first inspection.  The following asymptotic formulas help in this regard.  Applying (\ref{Ktendinfinity}) to (\ref{svgdefn}) gives that, for all $r>0$, $\sigma>0$ and $\mu\in\mathbb{R}$,
\begin{equation}\label{pjv}p(x)\sim \frac{1}{(2\sigma)^{\frac{r}{2}}\Gamma(\frac{r}{2})}|x|^{\frac{r}{2}-1}\mathrm{e}^{-|x-\mu|/\sigma}, \quad |x|\rightarrow\infty.
\end{equation}
Similarly, applying (\ref{Ktend0}) to (\ref{svgdefn}) (see \cite{gaunt thesis}) gives that
\begin{equation}\label{pmutend}p(x)\sim\begin{cases}\displaystyle \frac{1}{\sigma\sqrt{\pi}}\frac{\Gamma\big(\frac{r-1}{2}\big)}{\Gamma\big(\frac{r}{2}\big)}, &  x\rightarrow\mu,\:r>1, \\
\displaystyle -\frac{1}{\pi\sigma}\log|x-\mu|, & x\rightarrow\mu,\: r=1, \\
\displaystyle \frac{1}{(2\sigma)^r\sqrt{\pi}}\frac{\Gamma\big(\frac{1-r}{2}\big)}{\Gamma\big(\frac{r}{2}\big)}|x-\mu|^{r-1}, & x\rightarrow\mu,\: 0<r<1. \end{cases}
\end{equation}
The density thus has a singularity at $x=\mu$ if $r\leq1$.  In fact, for any parameters, the $\mathrm{SVG}(r,\sigma,\mu)$ distribution is unimodal with mode $\mu$.  This can be seen from the fact that the function $x^\nu K_\nu(x)$ is a decreasing function of $x$ in the interval $(0,\infty)$ for all $\nu\in\mathbb{R}$ (see (\ref{ddbk})).

The SVG distribution has a fundamental representation in terms of independent normal and gamma random variables (\cite{kkp01}, Proposition 4.1.2).  Let $X\sim\Gamma(\frac{r}{2},\frac{1}{2})$ (with p.d.f$.$ $\frac{1}{2^{r/2}\Gamma(r/2)}x^{r/2-1}\mathrm{e}^{-x/2}$, $x>0$) and $Y\sim N(0,1)$ be independent.  Then $\mu+\sigma \sqrt{X}Y\sim\mathrm{SVG}(r,\sigma,\mu)$.

The SVG distribution has moment generating function $M(t)=(1+\sigma^2t^2)^{-r/2}$, $t\in\mathbb{R}$, and therefore has moments of arbitrary order.  In particular, the mean and variance of $Z\sim \mathrm{SVG}(r,\sigma,\mu)$ are given by
\begin{equation} \label{vgmoment}\mathbb{E}Z=\mu, \quad \mathrm{Var}(Z)=r\sigma^2.
\end{equation}
Perhaps surprisingly, this author could not find a formula for the absolute centered moments of the $\mathrm{SVG}(r,\sigma,\mu)$ distribution in the literature.  The result and its simple proof are given here.



\begin{proposition}Let $Z\sim\mathrm{SVG}(r,\sigma,\mu)$.  Then, for $k>0$,
\begin{equation}\label{vgabmom}\mathbb{E}|Z-\mu|^k=\frac{2^{\frac{k}{2}}\sigma^k}{\sqrt{\pi}}\frac{\Gamma\big(\frac{r+k}{2}\big)\Gamma\big(\frac{k+1}{2}\big)}{\Gamma\big(\frac{r}{2}\big)}.
\end{equation}
\end{proposition}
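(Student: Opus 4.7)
The natural approach is to exploit the normal--gamma mixture representation stated in the excerpt just above the proposition: if $X\sim\Gamma(r/2,1/2)$ and $Y\sim N(0,1)$ are independent, then $Z-\mu\stackrel{d}{=}\sigma\sqrt{X}\,Y$. Consequently $|Z-\mu|^k\stackrel{d}{=}\sigma^k X^{k/2}|Y|^k$, and by independence the expectation factors as
$$\mathbb{E}|Z-\mu|^k \;=\; \sigma^k\,\mathbb{E}[X^{k/2}]\,\mathbb{E}|Y|^k.$$
The problem is thereby reduced to two independent, elementary one-dimensional moment evaluations, which can be carried out without invoking any special-function machinery.

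For the gamma factor, I would integrate $x^{k/2}$ against the density $\frac{1}{2^{r/2}\Gamma(r/2)}x^{r/2-1}e^{-x/2}$ and apply the substitution $u=x/2$ to reduce the integrand to a standard gamma integral; the result is a closed-form expression proportional to $\Gamma((r+k)/2)/\Gamma(r/2)$. For the absolute normal moment, symmetry gives $\mathbb{E}|Y|^k=2\int_0^\infty y^k(2\pi)^{-1/2}e^{-y^2/2}\,dy$, and the substitution $u=y^2/2$ converts this into a gamma integral producing $\Gamma((k+1)/2)/\sqrt{\pi}$ up to an explicit power of $2$. Multiplying the three pieces together then yields the claimed product of Gamma functions.

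There is no serious obstacle: once the mixture representation is invoked, the proposition is a routine bookkeeping exercise combining two textbook moment formulas. Its inclusion is motivated by the author's comment that such a formula for the absolute centered moments of the SVG distribution does not appear to be explicitly recorded in the literature, rather than by any technical difficulty in the derivation. The only point requiring a little care is tracking the powers of $2$ coming from each of the two substitutions so that they combine correctly with $\sigma^k$ in the final expression.
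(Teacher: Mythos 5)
Your approach is precisely the paper's: invoke the normal--gamma mixture $Z-\mu\stackrel{d}{=}\sigma\sqrt{X}\,Y$ with $X\sim\Gamma(\tfrac r2,\tfrac12)$ and $Y\sim N(0,1)$ independent, factor the expectation, and evaluate the two one-dimensional moments separately. So in terms of strategy there is nothing to add.

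However, the point you flag as ``requiring a little care'' --- tracking the powers of $2$ --- is exactly where a careful implementation does not reproduce the displayed constant, and it is worth carrying the bookkeeping all the way through. For $X\sim\Gamma(\tfrac r2,\tfrac12)$ (i.e.\ a $\chi^2_r$ variable with density $\tfrac{1}{2^{r/2}\Gamma(r/2)}x^{r/2-1}\mathrm{e}^{-x/2}$), the substitution $u=x/2$ gives $\mathbb{E}X^{s}=2^{s}\,\Gamma(\tfrac r2+s)/\Gamma(\tfrac r2)$, so $\mathbb{E}X^{k/2}=2^{k/2}\,\Gamma(\tfrac{r+k}{2})/\Gamma(\tfrac r2)$. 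Combined with $\mathbb{E}|Y|^k=\tfrac{2^{k/2}}{\sqrt\pi}\Gamma(\tfrac{k+1}{2})$, you obtain
\begin{equation*}
\mathbb{E}|Z-\mu|^k=\frac{2^{k}\sigma^k}{\sqrt{\pi}}\frac{\Gamma\big(\tfrac{r+k}{2}\big)\Gamma\big(\tfrac{k+1}{2}\big)}{\Gamma\big(\tfrac{r}{2}\big)},
\end{equation*}
with exponent $k$ on the $2$, not $k/2$. The paper's own proof quotes $\mathbb{E}X^{s}=\Gamma(\tfrac r2+s)/\Gamma(\tfrac r2)$, which is the moment formula for a unit-rate gamma and drops the $2^{s}$; this is the source of the mismatch. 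Two sanity checks confirm the corrected exponent: at $k=2$ the corrected formula gives $r\sigma^2$, the known variance, whereas the displayed version gives $r\sigma^2/2$; and at $k=1$, $r=2$ (the Laplace case) the corrected formula gives $\sigma$ while the displayed version gives $\sigma/\sqrt2$. Note also that in the proof of Theorem~\ref{thm1} the paper itself uses $\mathbb{E}|Z|=\tfrac{2}{\sqrt\pi}\Gamma(\tfrac{r+1}{2})/\Gamma(\tfrac r2)$ for $\sigma=1$, consistent with $2^{k}$ rather than $2^{k/2}$. So your plan is sound, but you should finish the computation and state the constant explicitly; doing so would correct a typo in the displayed result.
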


\begin{proof}We follow the approach given in Proposition 4.1.6 of \cite{kkp01} to obtain the moments of the $\mathrm{SVG}(r,\sigma,0)$ distribution. Recall that $Z-\mu=_d\sigma \sqrt{X} Y$, where $X\sim\Gamma(\frac{r}{2},\frac{1}{2})$ and $Y\sim N(0,1)$ are independent.  Therefore
\[\mathbb{E}|Z-\mu|^k=\sigma^k\mathbb{E}[X^{\frac{k}{2}}]\mathbb{E}|Y|^{k},\]
whence the result follows on using the standard formulas $\mathbb{E}X^s=\Gamma(\frac{r}{2}+s)/\Gamma(\frac{r}{2})$ and $\mathbb{E}|Y|^k=\frac{1}{\sqrt{\pi}}2^{\frac{k}{2}}\Gamma\big(\frac{k+1}{2}\big)$.
\end{proof}
In interpreting Corollary \ref{cor5.4} it will be useful to note the following formulas for the moments and cumulants of $Y\sim \mathrm{SVG}(r,\sigma,0)$ (\cite{eichelsbacher}, Lemma 3.6):
\begin{align*}&\mathbb{E}Y^2=r\sigma^2, \quad \mathbb{E}Y^4=3\sigma^4r(r+2), \quad \mathbb{E}Y^6=15\sigma^6r(r+2)(r+4), \\
&\kappa_2(Y)=r\sigma^2, \quad \kappa_4(Y)=6r\sigma^4,\quad \kappa_6(Y)=120r\sigma^6,
\end{align*}
with the odd order moments and cumulants all being equal to zero.  

Lastly, we note that the class of SVG distributions contains several classical distributions as special or limiting cases (\cite{gaunt vg}, Proposition 1.2).  

\begin{enumerate}

\item Let $X_r$ have the $\mathrm{SVG}(r,\frac{\sigma}{\sqrt{r}},\mu)$ distribution.  Then $X_r$ converges in distribution to a $N(\mu,\sigma^2)$ random variable in the limit $r\rightarrow\infty$.

\item A $\mathrm{SVG}(2,\sigma,\mu)$ random variable has the $\mathrm{Laplace}(\mu,\sigma)$ distribution with p.d.f$.$ $p(x)=\frac{1}{2\sigma}\mathrm{e}^{-|x-\mu|/\sigma}$, $x\in\mathbb{R}$. 

\item Let $X_1,\ldots,X_r$ and $Y_1,\ldots,Y_r$ be independent standard normal random variables.  Then $\sigma\sum_{k=1}^rX_kY_k$ has the $\mathrm{SVG}(r,\sigma,0)$ distribution.  

\item Suppose that $X\sim \Gamma(r,\lambda)$ and $Y\sim \Gamma(r,\lambda)$ are independent.  Then the random variable $X-Y$ has the $\mathrm{SVG}(2r,\lambda^{-1},0)$ distribution.

\end{enumerate}

\section{Bounds for the solution of the Stein equation}\label{sec3}

In this section, we obtain bounds for the solution of the SVG Stein equation (that is (\ref{377}) with $\theta=0$) which have the correct dependence on the test function $h$ to allow for Wasserstein and Kolmgorov distance bounds for SVG approximation via Stein's method.  

For ease of exposition, in our proofs, we shall analyse the solution of the $\mathrm{SVG}(r,1,0)$ Stein equation.  The general case follows from that fact that $\mathrm{SVG}(r,\sigma,\mu)=_d\mu+\sigma\mathrm{SVG}(r,1,0)$ and a simple rescaling and translation.  The solution of the $\mathrm{SVG}(r,1,0)$ Stein equation is then
\begin{align}\label{vgsolngeneral01}   f(x) &=-\frac{K_{\nu}(|x|)}{|x|^{\nu}}\! \int_0^x \! |t|^{\nu} I_{\nu}(|t|) \tilde{h}(t) \,\mathrm{d}t -\frac{I_{\nu}(|x|)}{|x|^{\nu}}\! \int_x^{\infty}\! |t|^{\nu} K_{\nu}(|t|)\tilde{h}(t)\,\mathrm{d}t \\
\label{vgsolngeneral11}&=-\frac{K_{\nu}(|x|)}{|x|^{\nu}} \!\int_0^x \! |t|^{\nu} I_{\nu}(|t|) \tilde{h}(t) \,\mathrm{d}t +\frac{I_{\nu}(|x|)}{|x|^{\nu}}\! \int_{-\infty}^{x}\! |t|^{\nu} K_{\nu}(|t|)\tilde{h}(t)\,\mathrm{d}t,
\end{align}
where $\nu=\frac{r-1}{2}$ and $\tilde{h}(x)=h(x)-\mathbb{E}h(Z)$ for $Z\sim\mathrm{SVG}(r,1,0)$.  The equality between (\ref{vgsolngeneral01}) and (\ref{vgsolngeneral11}) follows because $|t|^{\nu} K_{\nu}(|t|)$ is proportional to the $\mathrm{SVG}(r,1,0)$ density.  The equality is very useful, because it means that we will be able to restrict our attention to bounding the solution in the region $x\geq0$, from which a bound for all $x\in\mathbb{R}$ is immediate.

We now note two useful bounds due to \cite{gaunt vg} for the solution of the $\mathrm{SVG}(r,\sigma,\mu)$ Stein equation that will be used in the proof of Theorem \ref{thm1} and some of the applications of Section \ref{sec5}.  For bounded and measurable $h:\mathbb{R}\rightarrow\mathbb{R}$,
\begin{eqnarray}\label{vgf0}\|f\|&\leq& \frac{1}{\sigma}\bigg(\frac{1}{r}+\frac{\pi\Gamma(\frac{r}{2})}{2\Gamma(\frac{r+1}{2})}\bigg)\|\tilde{h}\|,  \\
\label{vgf1}\|f'\| &\leq& \frac{2}{\sigma^2r}\|\tilde{h}\|.
\end{eqnarray}

Let us now state the main result of this section.

\begin{theorem}\label{thm1}Suppose that $h:\mathbb{R}\rightarrow\mathbb{R}$ is bounded and measurable.  Let $f$ be the solution of the $\mathrm{SVG}(r,\sigma,\mu)$ Stein equation.  Then
\begin{eqnarray}\label{thmone}\|(x-\mu)f(x)\|&\leq& \bigg(\frac{3}{2}+\frac{1}{2r}\bigg)\|\tilde{h}\|, \\
\label{thmtwo}\|(x-\mu)f'(x)\|&\leq& \frac{1}{\sigma}\bigg(1+\frac{1}{2r}\bigg)\|\tilde{h}\|, \\
\label{thmthree}\|(x-\mu)f''(x)\|&\leq& \frac{1}{2\sigma^2}\bigg(9+\frac{1}{r}\bigg)\|\tilde{h}\|.
\end{eqnarray}
Now suppose that $h:\mathbb{R}\rightarrow\mathbb{R}$ is Lipschitz.  Then 
\begin{eqnarray}\label{thm1f0}\|f\|&\leq&\frac{7}{2}\|h'\|, \\
\|f'\|\label{thm1f1}&\leq&\frac{9}{2\sigma}\bigg(\frac{1}{r+1}+\frac{\pi\Gamma\big(\frac{r+1}{2}\big)}{2\Gamma\big(\frac{r}{2}+1\big)}\bigg)\|h'\|, \\
\|f''\|\label{thm1f2}&\leq&\frac{9}{\sigma^2(r+1)}\|h'\|,
\end{eqnarray}
and also
\begin{eqnarray}
\label{thmfour}\|(x-\mu)f'(x)\|&\leq&\frac{9}{2}\bigg(\frac{3}{2}+\frac{1}{2(r+1)}\bigg)\|h'\|, \\
\label{thmfive}\|(x-\mu)f''(x)\|&\leq&\frac{9}{2\sigma}\bigg(1+\frac{1}{2(r+1)}\bigg)\|h'\|, \\
\label{thmsix}\|(x-\mu)f^{(3)}(x)\|&\leq&\frac{9}{4\sigma^2}\bigg(9+\frac{1}{r+1}\bigg)\|h'\|.
\end{eqnarray}
\end{theorem}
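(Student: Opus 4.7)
The plan is to work in the standard case $\sigma = 1$, $\mu = 0$, reducing the general case by the rescaling noted before the theorem statement. On that scale both integral representations (\ref{vgsolngeneral01}) and (\ref{vgsolngeneral11}) are available, and their equality, which rests on $|t|^\nu K_\nu(|t|)$ being proportional to the standard SVG density, lets me restrict every estimate to $x \geq 0$ and read off the bound for $x < 0$ by symmetry.

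For the three bounds (\ref{thmone})--(\ref{thmthree}) involving $\|\tilde{h}\|$, the strategy is to exploit the factor $x$ to tame the singularity of $K_\nu$ at the origin. To prove (\ref{thmone}) I would multiply (\ref{vgsolngeneral01}) by $x$, insert $|\tilde{h}(t)| \leq \|\tilde{h}\|$, and reduce the problem to bounding
\[
x^{1-\nu} K_\nu(x) \int_0^x t^\nu I_\nu(t)\, \mathrm{d}t \quad \text{and} \quad x^{1-\nu} I_\nu(x) \int_x^\infty t^\nu K_\nu(t)\, \mathrm{d}t
\]
uniformly on $[0,\infty)$; these are precisely the quantities covered by the Bessel function integral inequalities of \cite{gaunt ineq1, gaunt ineq2, gaunt ineq3}. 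With (\ref{thmone}) in hand, (\ref{thmthree}) follows from the rearranged Stein equation $xf''(x) = \tilde{h}(x) + xf(x) - rf'(x)$, combined with (\ref{thmone}) and (\ref{vgf1}), and a short bookkeeping yields exactly $(9/2 + 1/(2r))\|\tilde{h}\|$. For (\ref{thmtwo}) I would differentiate the integral representation directly; using the recurrences $(x^{-\nu} K_\nu(x))' = -x^{-\nu} K_{\nu+1}(x)$ and $(x^{-\nu} I_\nu(x))' = x^{-\nu} I_{\nu+1}(x)$ the boundary contributions cancel via the Wronskian identity, leaving
\[
f'(x) = \frac{K_{\nu+1}(x)}{x^\nu} \int_0^x t^\nu I_\nu(t)\tilde{h}(t)\,\mathrm{d}t - \frac{I_{\nu+1}(x)}{x^\nu} \int_x^\infty t^\nu K_\nu(t)\tilde{h}(t)\,\mathrm{d}t,
\]
and multiplying by $x$ and invoking the same family of Bessel inequalities produces the prefactor $1 + 1/(2r)$.

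For the Lipschitz bounds, the central difficulty is that $\|\tilde{h}\|$ may be infinite; however, the pointwise estimate
\[
|\tilde{h}(t)| = |\mathbb{E}[h(t) - h(Z)]| \leq \|h'\|\,\mathbb{E}|t - Z| \leq \|h'\|(|t| + \mathbb{E}|Z|)
\]
is not. Substituting this into (\ref{vgsolngeneral01}) reduces matters to bounding $x^{-\nu} K_\nu(x) \int_0^x t^{\nu+1} I_\nu(t)\,\mathrm{d}t$ and $x^{-\nu} I_\nu(x) \int_x^\infty t^{\nu+1} K_\nu(t)\,\mathrm{d}t$, again controlled by \cite{gaunt ineq1, gaunt ineq2, gaunt ineq3}. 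The extra power of $|t|$ in the integrand is what shifts the effective parameter $r$ to $r+1$ in the prefactor of (\ref{thm1f1}), and this route yields (\ref{thm1f0}) and (\ref{thm1f1}). For (\ref{thm1f2}) I would invoke the iterative technique of \cite{dgv15}: differentiating the $\mathrm{SVG}(r,1,0)$ Stein equation shows that $g = f'$ itself satisfies the $\mathrm{SVG}(r+1,1,0)$ Stein equation with forcing $f + h'$, which is automatically centered with respect to $Z_{r+1}$ by the Stein characterization; applying (\ref{vgf1}) with $r \to r+1$, combined with the already established (\ref{thm1f0}), then gives $\|f''\| \leq \frac{2}{r+1}(\|f\| + \|h'\|) \leq \frac{9}{r+1}\|h'\|$. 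The bounds (\ref{thmfour})--(\ref{thmsix}) follow by the same iterative device: applying (\ref{thmone})--(\ref{thmthree}) with $r \to r+1$ to the derived Stein equation for $g = f'$ bounds $\|x g^{(k)}\| = \|x f^{(k+1)}\|$ for $k = 0, 1, 2$.

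The main obstacle I anticipate is the delicate control of the prefactors and, in particular, their uniform dependence on $r$ through $\nu = (r-1)/2$. The Bessel function inequalities of \cite{gaunt ineq1, gaunt ineq2, gaunt ineq3} are tailored for this task, but matching the explicit constants---$3/2 + 1/(2r)$ in (\ref{thmone}), the somewhat intricate $1/(r+1) + \pi\Gamma((r+1)/2)/(2\Gamma(r/2+1))$ factor in (\ref{thm1f1}), and the sharp $9/(r+1)$ in (\ref{thm1f2})---will require a careful splitting of the integration range into small $x$ versus $x$ bounded away from $0$ and separate treatment of the small- and large-argument asymptotics of $I_\nu$ and $K_\nu$ recorded in Appendix \ref{appendix}.
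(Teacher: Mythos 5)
Your plan matches the paper's blueprint for almost every inequality: reduce to $\sigma=1$, $\mu=0$, restrict to $x\ge0$ using the equality of the two integral representations; get $\|f\|$ from the pointwise estimate $|\tilde h(t)|\le\|h'\|(|t|+\mathbb{E}|Z|)$; get (\ref{thmone}), (\ref{thmtwo}) by direct estimation of $xf(x)$ and $xf'(x)$ via (\ref{jjj1}), (\ref{ddd2}), (\ref{jjj1z}), (\ref{ddd2z}); get (\ref{thmthree}) by rearranging the Stein equation; and get (\ref{thm1f2}) and (\ref{thmfour})--(\ref{thmsix}) by the \cite{dgv15} iterative device applied to the $\mathrm{SVG}(r+1,1,0)$ Stein equation $xf^{(3)}+(r+1)f''-xf'=h'+f$, feeding in the already-established bounds with $r\mapsto r+1$ and using $\|h'+f\|\le\tfrac{9}{2}\|h'\|$. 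Two remarks.

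First, and this is the genuine gap: you claim that (\ref{thm1f1}) follows by the same direct substitution of $|\tilde h(t)|\le\|h'\|(|t|+\mathbb{E}|Z|)$ into the differentiated integral representation, reducing to bounds on $x^{-\nu}K_{\nu+1}(x)\int_0^x t^{\nu+1}I_\nu\,\mathrm{d}t$ and $x^{-\nu}I_{\nu+1}(x)\int_x^\infty t^{\nu+1}K_\nu\,\mathrm{d}t$ (and the analogous $t^\nu$ integrals). None of the inequalities catalogued in Appendix~\ref{appendix} covers these quantities -- they all pair $K_\nu$ with $\int t^\beta I_\nu$ or appear with an extra power of $x$ (compare (\ref{jjj1z}), (\ref{ddd2z})) -- so your route would require new Bessel lemmas that are not in the paper, and in any case it is not clear how it would produce the very specific prefactor $\tfrac{9}{2}\bigl(\tfrac{1}{r+1}+\tfrac{\pi\Gamma(\frac{r+1}{2})}{2\Gamma(\frac{r}{2}+1)}\bigr)$. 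That prefactor is the fingerprint of the iterative technique: it is exactly the constant of (\ref{vgf0}) with $r\mapsto r+1$, multiplied by $\tfrac{9}{2}$ from $\|h'\|+\|f\|\le\tfrac{9}{2}\|h'\|$. The paper obtains (\ref{thm1f1}) this way, in the same stroke as (\ref{thm1f2}); you should do so too, since you already set up the iteration for (\ref{thm1f2}).

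Second, a small inaccuracy: the boundary contributions when differentiating (\ref{vgsolngeneral01}) cancel directly (they are $\mp K_\nu(x)I_\nu(x)\tilde h(x)$), not via the Wronskian; the Wronskian is needed only for the second derivative and higher. And the closing worry about ``splitting the integration range into small $x$ versus $x$ bounded away from $0$'' does not arise -- the inequalities (\ref{propb2a12})--(\ref{ddd2z}) are already uniform in $x\ge0$ and are applied globally.
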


\begin{proof}As noted above, for ease of notation, we set $\sigma=1$ and $\mu=0$.  The bounds for the general case, as stated in the theorem, follow from a simple change of variables; see the proof of Theorem 3.6 of \cite{gaunt vg}.  We also recall that it suffices to obtain bounds in the region $x\geq0$.

Let us first establish the bound for $\|f\|$, which we will need to obtain several of the other bounds.  By the mean value theorem, $|\tilde{h}(x)|\leq\|h'\|(|x|+\mathbb{E}|Z|)$, where $Z\sim \mathrm{SVG}(r,1,0)$.  From (\ref{vgabmom}) we have $\mathbb{E}|Z|=\frac{2}{\sqrt{\pi}}\Gamma(\frac{r+1}{2})/\Gamma(\frac{r}{2})$.  Now, on using inequalities (\ref{propb2a12}), (\ref{rnmt2}), (\ref{fff1}) and (\ref{rnmt1}) to obtain the second inequality we have, for $x\geq0$,
\begin{align*}|f(x)|&\leq\|h'\|\bigg\{\frac{K_\nu(x)}{x^\nu}\!\int_0^x\!(t+\mathbb{E}|Z|)t^\nu I_\nu(t)\,\mathrm{d}t+\frac{I_\nu(x)}{x^\nu}\!\int_x^\infty\!(t+\mathbb{E}|Z|)t^\nu K_\nu(t)\,\mathrm{d}t\bigg\}
\\ 
&\leq \|h'\|\bigg\{\frac{1}{2}+\mathbb{E}|Z|\cdot\frac{1}{2\nu+1}+1+\mathbb{E}|Z|\cdot\frac{\sqrt{\pi}\Gamma(\nu+\frac{1}{2})}{2\Gamma(\nu+1)}\bigg\} \\
&=\|h'\|\bigg\{\frac{3}{2}+\frac{2}{\sqrt{\pi}}\frac{\Gamma(\nu+1)}{\Gamma(\nu+\frac{1}{2})}\bigg(\frac{1}{2\nu+1}+\frac{\sqrt{\pi}\Gamma(\nu+\frac{1}{2})}{2\Gamma(\nu+1)}\bigg)\bigg\} \\
&=\|h'\|\bigg\{\frac{5}{2}+\frac{\Gamma(\nu+1)}{\sqrt{\pi}\Gamma(\nu+\frac{3}{2})}\bigg\},
\end{align*}
where we used the standard formula $u\Gamma(u)=\Gamma(u+1)$ to obtain the final equality.  Now, $\frac{\Gamma(\nu+1)}{\Gamma(\nu+\frac{3}{2})}$ is a decreasing function of $\nu$ for $\nu>-\frac{1}{2}$ (see \cite{gio}), and so is bounded above by $\Gamma(\frac{1}{2})=\sqrt{\pi}$ for all $\nu>-\frac{1}{2}$.  Hence, $|f(x)|\leq\frac{7}{2}\|h'\|$ for all $x\geq0$, which is sufficient to prove (\ref{thm1f0}).

The bounds for $\|f'\|$ and $\|f''\|$ can be obtained through an application of the iterative technique of \cite{dgv15}.  Differentiating both sides of the $\mathrm{SVG}(r,1,0)$ Stein equation (\ref{377}) gives
\begin{equation}\label{iteqn}xf^{(3)}(x)+(r+1)f''(x)-xf'(x)=h'(x)+f(x).
\end{equation}
We recognise (\ref{iteqn}) as the $\mathrm{SVG}(r+1,1,0)$ Stein equation, applied to $f'$, with test function $h'(x)+f(x)$.  We note that the test function $h'(x)+f(x)$ has mean zero with respect to the random variable $Y\sim\mathrm{SVG}(r+1,1,0)$.  This fact will be required in order to later apply inequalities (\ref{vgf0}) and (\ref{vgf1}).  Since $h$ is Lipschitz, by inequality (\ref{thm1f0}), we have that $\mathbb{E}|h'(Y)+f(Y)|<\infty$, and in particular as (\ref{iteqn}) is the $\mathrm{SVG}(r+1,1,0)$ Stein equation applied to $f'$, we have that $\mathbb{E}[Yf^{(3)}(Y)+(r+1)f''(Y)-Yf'(Y)]=0$, and thus $\mathbb{E}[h'(Y)+f(Y)]=0$.    
Therefore applying inequalities (\ref{vgf0}) and (\ref{vgf1}), respectively, with $r$ replaced by $r+1$ and test function $h'(x)+f(x)$, gives
\begin{align*}\|f'\|&=\bigg(\frac{1}{r+1}+\frac{\pi\Gamma\big(\frac{r+1}{2}\big)}{2\Gamma\big(\frac{r}{2}+1\big)}\bigg)\|h'(x)+f(x)\| \\
&\leq\bigg(\frac{1}{r+1}+\frac{\pi\Gamma\big(\frac{r+1}{2}\big)}{2\Gamma\big(\frac{r}{2}+1\big)}\bigg)\big(\|h'\|+\|f\|\big)\leq \frac{9}{2}\bigg(\frac{1}{r+1}+\frac{\pi\Gamma\big(\frac{r+1}{2}\big)}{2\Gamma\big(\frac{r}{2}+1\big)}\bigg)\|h'\|, \\
\|f''\|&\leq \frac{2}{r+1}\big(\|h'\|+\|f\|\big)\leq \frac{9}{r+1}\|h'\|,
\end{align*}
where we used (\ref{thm1f0}) to bound $\|f\|$.

Let us now establish the bounds (\ref{thmone}), (\ref{thmtwo}) and (\ref{thmthree}).  On using inequalities (\ref{jjj1}) and (\ref{ddd2}) to obtain the second inequality, we obtain, for $x\geq0$,
\begin{align*}|xf(x)|&=\bigg|\frac{K_\nu(x)}{x^{\nu-1}}\int_0^x t^\nu I_\nu(t)\tilde{h}(t)\,\mathrm{d}t-\frac{I_\nu(x)}{x^{\nu-1}}\int_x^\infty t^\nu K_\nu(t)\tilde{h}(t)\,\mathrm{d}t\bigg| \\
&\leq\|\tilde{h}\|\bigg\{\frac{K_\nu(x)}{x^{\nu-1}}\int_0^x t^\nu I_\nu(t)\,\mathrm{d}t+\frac{I_\nu(x)}{x^{\nu-1}}\int_x^\infty t^\nu K_\nu(t)\,\mathrm{d}t\bigg\} \\
&\leq \|\tilde{h}\|\bigg(\frac{\nu+1}{2\nu+1}+1\bigg)=\frac{3\nu+2}{2\nu+1}\|\tilde{h}\|=\frac{3r+1}{2r}\|\tilde{h}\|=\bigg(\frac{3}{2}+\frac{1}{2r}\bigg)\|\tilde{h}\|.
\end{align*}
On using the differentiation formulas (\ref{diff11}) and (\ref{diff22}) and inequalities (\ref{jjj1z}) and (\ref{ddd2z}), we obtain, for $x\geq0$,
\begin{align*}|xf'(x)|&=\bigg|\frac{K_{\nu+1}(x)}{x^{\nu-1}}\int_0^x t^\nu I_\nu(t)\tilde{h}(t)\,\mathrm{d}t+\frac{I_{\nu+1}(x)}{x^{\nu-1}}\int_x^\infty t^\nu K_\nu(t)\tilde{h}(t)\,\mathrm{d}t\bigg| \\
&\leq \|\tilde{h}\|\bigg\{\frac{K_{\nu+1}(x)}{x^{\nu-1}}\int_0^x t^\nu I_\nu(t)\,\mathrm{d}t+\frac{I_{\nu+1}(x)}{x^{\nu-1}}\int_x^\infty t^\nu K_\nu(t)\,\mathrm{d}t\bigg\} \\
&\leq \|\tilde{h}\|\bigg(\frac{\nu+1}{2\nu+1}+\frac{1}{2}\bigg)=\frac{4\nu+3}{2(2\nu+1)}\|\tilde{h}\|=\frac{2r+1}{2r}\|\tilde{h}\|=\bigg(1+\frac{1}{2r}\bigg)\|\tilde{h}\|.
\end{align*}
Since it suffices to consider $x\geq0$, we have proved (\ref{thmone}) and (\ref{thmtwo}).  Now, from the $\mathrm{SVG}(r,1,0)$ Stein equation we have that, for $x\in\mathbb{R}$, 
\begin{equation*}|xf''(x)|=|\tilde{h}(x)-rf'(x)+xf(x)|\leq \|\tilde{h}\|+r\|f'\|+\|xf(x)\|.
\end{equation*}
Applying (\ref{vgf1}) to bound $\|f'\|$ and (\ref{thmone}) to bound $\|xf(x)\|$ yields the bound
\begin{equation*}\|xf''(x)\|\leq \bigg(1+r\cdot\frac{2}{r}+\bigg(\frac{3}{2}+\frac{1}{2r}\bigg)\bigg)\|\tilde{h}\|=\frac{1}{2}\bigg(9+\frac{1}{r}\bigg)\|\tilde{h}\|.
\end{equation*}


We now bound $\|xf'(x)\|$, $\|xf''(x)\|$ and $\|xf^{(3)}(x)\|$ for Lipschitz $h$ using the iterative technique of \cite{dgv15} similarly to how we obtained inequalities (\ref{thm1f1}) and (\ref{thm1f2}).  We recall that (\ref{iteqn}) is the $\mathrm{SVG}(r+1,1,0)$ Stein equation, applied to $f'$, with test function $h'(x)+f(x)$, which we also recall has mean zero with respect to the $\mathrm{SVG}(r+1,1,0)$ measure.   Therefore applying inequalities (\ref{thmone}), (\ref{thmtwo}) and (\ref{thmthree}), respectively, with $r$ replaced by $r+1$ and test function $h'(x)+f(x)$, gives
\begin{eqnarray*}\|xf'(x)\|&\leq&\bigg(\frac{3}{2}+\frac{1}{2(r+1)}\bigg)\|h'(x)+f(x)\|\\
&\leq&\bigg(\frac{3}{2}+\frac{1}{2(r+1)}\bigg)\big(\|h'\|+\|f\|\big)\leq \frac{9}{2}\bigg(\frac{3}{2}+\frac{1}{2(r+1)}\bigg)\|h'\|, \\
\|xf''(x)\|&\leq&\bigg(1+\frac{1}{2(r+1)}\bigg)\big(\|h'\|+\|f\|\big)\leq \frac{9}{2}\bigg(1+\frac{1}{2(r+1)}\bigg)\|h'\|, \\
\|xf^{(3)}(x)\|&\leq&\frac{1}{2}\bigg(9+\frac{1}{r+1}\bigg)\big(\|h'\|+\|f\|\big)\leq \frac{9}{4}\bigg(9+\frac{1}{r+1}\bigg)\|h'\|, 
\end{eqnarray*}
where we used inequality (\ref{thm1f0}) to bound $\|f\|$.  The proof is complete.
\end{proof}

In the following corollary, we apply some of the estimates of Theorem \ref{thm1} to bound some useful quantities.  We shall make use of these bounds in Section \ref{sec4}.  It will be convenient to define the operator $T_r$ by $T_rf(x)=xf'(x)+rf(x)$.

\begin{corollary}\label{cor3.22}Let $f$ be the solution of the $\mathrm{SVG}(r,\sigma,0)$ Stein equation.  Then for $h:\mathbb{R}\rightarrow\mathbb{R}$ bounded and measurable, and Lipschitz, respectively,  
\begin{eqnarray}\label{medfs}\sigma^2\|T_rf'\|&\leq&\bigg(\frac{5}{2}+\frac{1}{2r}\bigg)\|\tilde{h}\|, \\
\label{lemse}\sigma^2\|(T_rf')'\|&\leq&\frac{9}{4}\bigg(5+\frac{1}{r+1}\bigg)\|h'\|.
\end{eqnarray}
\end{corollary}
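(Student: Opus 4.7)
The plan is to observe that both estimates follow almost immediately once the Stein equation is rewritten in a way that isolates the operator $T_r f'$, so the real content is the bounds from Theorem \ref{thm1} that we have already proved.

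\textbf{First bound.} I start from the $\mathrm{SVG}(r,\sigma,0)$ Stein equation, which (with $\mu=0$, $\theta=0$) reads
\begin{equation*}
\sigma^2 x f''(x)+\sigma^2 r f'(x)-x f(x)=\tilde{h}(x).
\end{equation*}
The left-hand side rearranges to $\sigma^2\bigl(xf''(x)+rf'(x)\bigr)-xf(x)=\sigma^2 T_r f'(x)-xf(x)$, so
\begin{equation*}
\sigma^2 T_r f'(x)=\tilde{h}(x)+xf(x).
\end{equation*}
The triangle inequality together with (\ref{thmone}) then gives
\begin{equation*}
\sigma^2\|T_r f'\|\le \|\tilde{h}\|+\|xf(x)\|\le \Bigl(1+\tfrac{3}{2}+\tfrac{1}{2r}\Bigr)\|\tilde{h}\|=\Bigl(\tfrac{5}{2}+\tfrac{1}{2r}\Bigr)\|\tilde{h}\|,
\end{equation*}
which is (\ref{medfs}).

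\textbf{Second bound.} For (\ref{lemse}) I differentiate the Stein equation in $x$, obtaining
\begin{equation*}
\sigma^2 xf^{(3)}(x)+\sigma^2(r+1)f''(x)-xf'(x)-f(x)=h'(x),
\end{equation*}
and note that the left-hand side is exactly $\sigma^2(T_r f')'(x)-xf'(x)-f(x)$, since $(T_r f')'(x)=xf^{(3)}(x)+(r+1)f''(x)$. Hence
\begin{equation*}
\sigma^2 (T_r f')'(x)=h'(x)+f(x)+xf'(x).
\end{equation*}
Taking suprema and invoking the Lipschitz bounds (\ref{thm1f0}) for $\|f\|$ and (\ref{thmfour}) for $\|xf'(x)\|$ from Theorem \ref{thm1} gives
\begin{equation*}
\sigma^2\|(T_r f')'\|\le \|h'\|+\tfrac{7}{2}\|h'\|+\tfrac{9}{2}\Bigl(\tfrac{3}{2}+\tfrac{1}{2(r+1)}\Bigr)\|h'\|,
\end{equation*}
and a short arithmetic simplification collects this to $\tfrac{9}{4}\bigl(5+\tfrac{1}{r+1}\bigr)\|h'\|$, which is (\ref{lemse}).

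There is no real obstacle here: the key observation is that $T_r f'(x)=xf''(x)+rf'(x)$ appears naturally in the Stein equation (and its derivative), so the corollary is just a repackaging of the estimates (\ref{thmone}), (\ref{thm1f0}) and (\ref{thmfour}) already established in Theorem \ref{thm1}. The main thing to check is the bookkeeping of constants in the final simplification, and that the identity $(T_r f')'(x)=xf^{(3)}(x)+(r+1)f''(x)$ matches the term $\sigma^2 xf^{(3)}+\sigma^2(r+1)f''$ produced by differentiating the Stein equation, which it does.
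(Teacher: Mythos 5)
Your proof is correct and follows essentially the same route as the paper: both rewrite the Stein equation as $\sigma^2 T_r f'(x)=\tilde{h}(x)+xf(x)$, differentiate to get $\sigma^2 (T_r f')'(x)=h'(x)+xf'(x)+f(x)$, and then apply the triangle inequality together with the estimates (\ref{thmone}), (\ref{thm1f0}) and (\ref{thmfour}) from Theorem \ref{thm1}. The arithmetic check $1+\tfrac{7}{2}+\tfrac{9}{2}\bigl(\tfrac{3}{2}+\tfrac{1}{2(r+1)}\bigr)=\tfrac{9}{4}\bigl(5+\tfrac{1}{r+1}\bigr)$ is correct.
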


\begin{proof}As $f$ satisfies $\sigma^2T_rf'(x)=\tilde{h}(x)+xf(x)$, we have $\sigma^2(T_rf')'(x)=h'(x)+xf'(x)+f(x)$.  From the triangle inequality and the estimates of Theorem \ref{thm1},
\begin{align*}\sigma^2\|T_rf'\|&\leq\|\tilde{h}\|+\|xf(x)\|\leq \bigg(\frac{5}{2}+\frac{1}{2r}\bigg)\|\tilde{h}\|, \\
\sigma^2\|(T_rf')'\|&\leq\|h'\|+\|xf'(x)\|+\|f\|\leq \bigg\{1+\frac{9}{2}\bigg(\frac{3}{2}+\frac{1}{2(r+1)}\bigg)+\frac{7}{2}\bigg\}\|h'\|,
\end{align*}
which proves the result.
\end{proof}

\begin{corollary}\label{cor345}Let $f_z$ denote the solution of the $\mathrm{SVG}(r,\sigma,\mu)$ Stein equation with test function $h_z(x)=\mathbf{1}(x\leq z)$.  Then
\begin{align*}&\|f_z\|\leq \frac{1}{\sigma}\bigg(\frac{1}{r}+\frac{\pi\Gamma(\frac{r}{2})}{2\Gamma(\frac{r+1}{2})}\bigg), \quad \|f_z'\| \leq \frac{2}{\sigma^2r}, \quad \sigma^2\|T_rf_z'\|\leq \frac{5}{2}+\frac{1}{2r}, \\
&\|(x-\mu)f_z(x)\|\leq \frac{3}{2}+\frac{1}{2r}, \quad \|(x-\mu)f_z'(x)\|\leq \frac{1}{\sigma}\bigg(1+\frac{1}{2r}\bigg), \\
&\|(x-\mu)f_z''(x)\|\leq \frac{1}{2\sigma^2}\bigg(9+\frac{1}{r}\bigg).
\end{align*}
\end{corollary}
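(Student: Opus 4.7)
The plan is to observe that the corollary is entirely a plug-in exercise: each of the six listed bounds has already been established in Theorem \ref{thm1}, inequalities (\ref{vgf0})--(\ref{vgf1}), and Corollary \ref{cor3.22} for general bounded measurable $h$ with a factor of $\|\tilde{h}\|$ on the right-hand side. It therefore suffices to verify that $\|\tilde{h}_z\|\leq 1$ for the indicator test function $h_z(x)=\mathbf{1}(x\leq z)$.

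First I would compute $\tilde{h}_z$ directly. Writing $p=\mathbb{P}(Z\leq z)$ for $Z\sim\mathrm{SVG}(r,\sigma,\mu)$, we have $\tilde{h}_z(x)=\mathbf{1}(x\leq z)-p$, which takes only the two values $1-p$ and $-p$. Hence $\|\tilde{h}_z\|=\max(p,1-p)\leq 1$. Since $h_z$ is bounded and measurable, the hypotheses of the first halves of Theorem \ref{thm1} and Corollary \ref{cor3.22}, as well as of (\ref{vgf0}) and (\ref{vgf1}), are all satisfied.

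Then I would simply substitute $\|\tilde{h}\|\leq 1$ into each of the six relevant inequalities: (\ref{vgf0}) yields the bound on $\|f_z\|$; (\ref{vgf1}) yields the bound on $\|f_z'\|$; (\ref{medfs}) gives the bound on $\sigma^2\|T_rf_z'\|$; and (\ref{thmone}), (\ref{thmtwo}), (\ref{thmthree}) give the three bounds involving $(x-\mu)$. Each substitution is immediate.

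There is no real obstacle here; the only mild subtlety is that $h_z$ is not Lipschitz, so the $\|h'\|$-type bounds of Theorem \ref{thm1} (namely (\ref{thm1f0})--(\ref{thmsix}) and (\ref{lemse})) cannot be invoked. Fortunately, every quantity appearing in the statement of Corollary \ref{cor345} has been bounded in terms of $\|\tilde{h}\|$ in the earlier results, so the collection of estimates listed is precisely the subset that survives restriction to indicator test functions.
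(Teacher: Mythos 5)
Your proposal matches the paper's proof exactly: apply the trivial bound $\|\tilde{h}_z\|\leq 1$ to inequalities (\ref{vgf0}), (\ref{vgf1}), (\ref{medfs}), (\ref{thmone}), (\ref{thmtwo}) and (\ref{thmthree}). Your extra remarks (computing $\|\tilde{h}_z\|=\max(p,1-p)$ explicitly and noting that $h_z$ is not Lipschitz, so only the $\|\tilde{h}\|$-type bounds are available) are correct and merely make explicit what the paper leaves implicit.
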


\begin{proof}Apply the inequality $\|\tilde{h}_z\|\leq 1$ to the bounds (\ref{vgf0}), (\ref{vgf1}), (\ref{medfs}), (\ref{thmone}), (\ref{thmtwo}) and (\ref{thmthree}), respectively.
\end{proof}

\begin{remark}For the normal \cite{chen} and exponential \cite{chatterjee} Stein equations, because the solution of the Stein equation with test function $h_z(x)=\mathbf{1}(x\leq z)$ can be expressed in terms of elementary functions, a detailed analysis of the solution yields bounds with smaller constants than would be obtained by first working with a general bounded test function $h$ and then bounding $\|\tilde{h}_z\|\leq1$.  However, because of the presence of modified Bessel functions in the solution, such improvements would be more difficult to obtain here.
\end{remark}

It is natural to ask whether, for all $z\in\mathbb{R}$, a bound of the form $\|f_z''\|\leq C_{r,\sigma}$ could be obtained for the solution $f_z$.  The following proposition, which is proved in Section \ref{sec6}, shows that this is not possible.

\begin{proposition}\label{disclem}$f_\mu'(x)$ has a discontinuity at $x=\mu$.
\end{proposition}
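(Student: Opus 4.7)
The plan is to reduce to the standard case $\sigma=1$, $\mu=0$ via the rescaling exploited throughout Section \ref{sec3}, so that the test function becomes $h_0(t)=\mathbf{1}(t\leq 0)$. Since the $\mathrm{SVG}(r,1,0)$ density is symmetric about the origin, $\mathbb{E}h_0(Z)=\frac{1}{2}$, giving $\tilde h_0(t) = \frac{1}{2}$ for $t<0$ and $\tilde h_0(t) = -\frac{1}{2}$ for $t>0$. A direct change of variables in (\ref{vgsolngeneral01}) shows that $f(x)=f(-x)$, so $f'$ is odd; it therefore suffices to show $f'(0^+)\neq 0$.

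For $x>0$, I would differentiate (\ref{vgsolngeneral01}) using the standard identities $\frac{d}{dx}[x^{-\nu}K_\nu(x)] = -x^{-\nu}K_{\nu+1}(x)$ and $\frac{d}{dx}[x^{-\nu}I_\nu(x)] = x^{-\nu}I_{\nu+1}(x)$. The two Leibniz boundary terms produced by the variable endpoints cancel because they equal $\pm K_\nu(x)I_\nu(x)\tilde h_0(x)$, leaving
\begin{equation*}
f'(x) = -\frac{K_{\nu+1}(x)}{2x^\nu}\int_0^x t^\nu I_\nu(t)\,\mathrm{d}t + \frac{I_{\nu+1}(x)}{2x^\nu}\int_x^\infty t^\nu K_\nu(t)\,\mathrm{d}t.
\end{equation*}

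The next step is to pass to the limit $x\to 0^+$ using the small-argument asymptotics collected in Appendix \ref{appendix}: $I_\nu(t)\sim t^\nu/[2^\nu\Gamma(\nu+1)]$ and $K_{\nu+1}(x)\sim 2^\nu\Gamma(\nu+1)/x^{\nu+1}$. The inner integral then satisfies $\int_0^x t^\nu I_\nu(t)\,\mathrm{d}t \sim x^{2\nu+1}/[(2\nu+1)2^\nu\Gamma(\nu+1)]$, so the first term in the display converges to $-1/[2(2\nu+1)] = -1/(2r)$. The second term vanishes, because $I_{\nu+1}(x)/x^\nu = O(x)$ as $x\to 0^+$ while $\int_0^\infty t^\nu K_\nu(t)\,\mathrm{d}t$ is a finite constant. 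Hence $f'(0^+) = -\frac{1}{2r}$, and by odd symmetry $f'(0^-) = \frac{1}{2r}$, so $f'$ has a jump of size $1/r$ at $0$ (equivalently, $1/(r\sigma^2)$ at $\mu$ in the general case).

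The main, and only mildly delicate, obstacle is the uniform handling of the small-argument asymptotics across all $r>0$. The boundary case $\nu=0$ deserves a sanity check, since $K_0$ has a logarithmic rather than power-type singularity at the origin; however, the function that actually appears in the leading term is $K_{\nu+1}=K_1$, whose asymptotic $K_1(x)\sim 1/x$ is consistent with the general formula $2^\nu\Gamma(\nu+1)/x^{\nu+1}$ at $\nu=0$. Likewise, convergence of $\int_0^\infty t^\nu K_\nu(t)\,\mathrm{d}t$ near $0$ for $\nu\in(-1/2,0)$ follows from the symmetry $K_\nu=K_{-\nu}$ and holds precisely on the range $r>0$ under consideration.
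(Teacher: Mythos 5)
Your proof is correct and follows essentially the same route as the paper's: differentiate the explicit solution $f_0$ (via the reduction to $\sigma=1$, $\mu=0$), observe that the $I_{\nu+1}$ term vanishes as $x\downarrow 0$, and compute $f'(0^+)=-\tfrac{1}{2(2\nu+1)}=-\tfrac{1}{2r}$ using the small-argument asymptotics (\ref{Itend0}) and (\ref{Ktend0}). The only (pleasant) divergence is that you invoke the evenness of $f$ --- which follows from (\ref{vgsolngeneral01}) because $\tilde h_0$ is odd and $|t|^\nu K_\nu(|t|)$ integrates $\tilde h_0$ to zero over $\mathbb{R}$ --- to read off $f'(0^-)=-f'(0^+)$ by symmetry, whereas the paper evaluates the left-hand limit directly from the $|x|$-form of the solution; both give the same jump $1/(r\sigma^2)$ at $\mu$.
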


Similarly, one may ask whether a bound of the form $\|f^{(3)}\|\leq C_{r,\sigma}\|h'\|$ could be obtained for all Lipschitz $h:\mathbb{R}\rightarrow\mathbb{R}$.  The following proposition, which is proved in Section \ref{sec6}, again shows this is not possible (see \cite{eden2} for similar results that apply to solutions of Stein equations for a wide class of distributions).  Our approach differs from that of Proposition \ref{disclem} in that we do not find a Lipschitz test function $h$ for which $f''$ has a discontinuity.  This would be more tedious to establish for $f''$ than for $f'$ and instead we consider a highly oscillating test function and perform an asymptotic analysis.    

\begin{proposition}\label{ptpt}Let $f$ be the solution of the $\mathrm{SVG}(r,\sigma,\mu)$ Stein equation. Then there does not exist a constant $M_{r,\sigma}>0$ such that $\|f^{(3)}\|\leq M_{r,\sigma}\|h'\|$ for all Lipschitz $h:\mathbb{R}\rightarrow\mathbb{R}$.
\end{proposition}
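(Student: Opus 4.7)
Following the hint in the statement, I would exhibit a sequence of Lipschitz test functions whose Stein solutions have unboundedly large third derivatives near the origin. Set $h_n(x):=\frac{1}{n}\sin(nx)$, so that $\|h_n'\|=1$ for every $n$. Since $h_n$ is odd and (in the case $\mu=0$) the $\mathrm{SVG}(r,\sigma,0)$ distribution is symmetric, $\mathbb{E}h_n(Z)=0$, whence $\tilde{h}_n=h_n$ and $\|\tilde{h}_n\|\leq 1/n$. By the translation relation $\mathrm{SVG}(r,\sigma,\mu)=_d\mu+\mathrm{SVG}(r,\sigma,0)$ it suffices to treat $\mu=0$, and I write $f_n$ for the corresponding solution.

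The main identity is obtained by differentiating the $\mathrm{SVG}(r,\sigma,0)$ Stein equation once, which, exactly as in (\ref{iteqn}), gives
\begin{equation*}
\sigma^2 xf_n^{(3)}(x)+\sigma^2(r+1)f_n''(x)-xf_n'(x)-f_n(x)=h_n'(x).
\end{equation*}
I would argue by contradiction: assume $\|f_n^{(3)}\|\leq M_{r,\sigma}$ for every $n$. Evaluating the identity at $x=0$ (its left-hand side is continuous under the hypothesis) yields $\sigma^2(r+1)f_n''(0)=1+f_n(0)$; since (\ref{vgf0}) applied with $\|\tilde{h}_n\|\leq 1/n$ forces $|f_n(0)|\leq\|f_n\|=O(1/n)$, one obtains $f_n''(0)\to 1/(\sigma^2(r+1))$ as $n\to\infty$.

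Next I would evaluate the identity at $x_n:=\pi/(2n)$, chosen so that $h_n'(x_n)=\cos(\pi/2)=0$. The contradiction hypothesis makes $f_n''$ uniformly $M_{r,\sigma}$-Lipschitz, so $f_n''(x_n)=f_n''(0)+O(1/n)=1/(\sigma^2(r+1))+o(1)$; likewise (\ref{vgf0}) and (\ref{vgf1}) applied to $h_n$ give $f_n(x_n)=O(1/n)$ and $x_nf_n'(x_n)=O(1/n^2)$. Substituting into the differentiated Stein equation,
\begin{equation*}
\sigma^2\cdot\frac{\pi}{2n}\cdot f_n^{(3)}(x_n)=-1+o(1),
\end{equation*}
so $|f_n^{(3)}(x_n)|\sim 2n/(\sigma^2\pi)$ diverges, contradicting the assumed bound.

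The main technical subtlety is the evaluation of the differentiated Stein equation at the singular point $x=0$: one needs the continuity (indeed $M_{r,\sigma}$-Lipschitz regularity) of $f_n''$ at $0$, but this is precisely what the contradiction hypothesis supplies, and it simultaneously justifies the first-order expansion $f_n''(x_n)-f_n''(0)=O(x_n)$ needed at $x_n$. Everything else is routine: the crucial point is to extract the $o(1)$ estimates on $f_n(x_n)$ and $x_nf_n'(x_n)$ from the bounded-test-function bounds (\ref{vgf0})--(\ref{vgf1}) applied with the small norm $\|\tilde{h}_n\|\leq 1/n$, rather than from the Lipschitz bounds of Theorem~\ref{thm1}, which would give only $O(1)$ and be useless here.
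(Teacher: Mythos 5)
Your argument is correct and takes a genuinely different, and more elementary, route than the paper's. Both proofs use the same family of oscillating test functions $h_n(x)=\sin(nx)/n$ and exploit that $\|\tilde h_n\|=O(1/n)$ while $\|h_n'\|=1$, but from there the strategies diverge. The paper directly computes $f^{(3)}(x)$ from the explicit Bessel-function solution formula, bounds three separate remainder terms $R_1,R_2,R_3$ via inequalities for integrals of modified Bessel functions, and extracts the leading behaviour $f^{(3)}(x)\sim a^2x/(2(\nu+2))$ in the regime $ax\ll 1\ll a^2x$. You instead argue by contradiction: assuming a uniform bound $\|f_n^{(3)}\|\le M_{r,\sigma}$, you evaluate the differentiated Stein equation (\ref{iteqn}) at the origin (pinning $f_n''(0)$ near the nonzero constant $1/(\sigma^2(r+1))$) and at the nearby zero $x_n=\pi/(2n)$ of $h_n'$, using the assumed Lipschitz regularity of $f_n''$ to transfer the first estimate to the second; since $f_n(x_n)$ and $x_nf_n'(x_n)$ are $o(1)$ by (\ref{vgf0})--(\ref{vgf1}) applied with $\|\tilde h_n\|\le 1/n$, the equation forces $|f_n^{(3)}(x_n)|\sim 2n/(\sigma^2\pi)$, a contradiction. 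Your approach sidesteps all the Bessel-function asymptotics and is considerably shorter; the paper's direct computation yields in return a quantitative description of the blow-up rate, which is not needed for the proposition itself. One small clarification: the identity $\sigma^2(r+1)f_n''(0)=h_n'(0)+f_n(0)$ in fact holds unconditionally for Lipschitz $h$ (it is recorded in Remark~\ref{remv}~(iii), obtained by iterating the evaluation of the Stein equation at the origin), so the contradiction hypothesis is genuinely needed only for the step $|f_n''(x_n)-f_n''(0)|\le M_{r,\sigma}x_n$.
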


\begin{remark}\label{remv}(i) Throughout this remark, we set $\mu=0$. The bounds (\ref{vgf1}) and (\ref{thm1f2}) are of order $r^{-1}$ as $r\rightarrow\infty$.  This is indeed the optimal order, which can be seen by the following argument, which is similar to the one given in Remark 2.2 of \cite{gaunt chi square} to show that the rate in their bound for solution of the gamma Stein equation was optimal. 

Evaluating both sides of the $\mathrm{SVG}(r,\sigma,0)$ Stein equation at $x=0$ gives $f'(0)=\frac{1}{\sigma^2r}\tilde{h}(0)$.  Also, evaluating both sides of (\ref{iteqn}) (with general $\sigma$) at $x=0$ gives that $f''(0)=\frac{1}{\sigma^2(r+1)}\big(h'(0)+f(0)\big)$, from which we conclude that the $O(r^{-1})$ rate in (\ref{thm1f2}) is also optimal.

(ii) The bound (\ref{vgf0}) for $\|f\|$ is of order $r^{-\frac{1}{2}}$ as $r\rightarrow\infty$.  Indeed, for $r>1$,
\begin{equation}\label{oct8}\sqrt{\frac{2}{r}}<\frac{\Gamma(\frac{r}{2})}{\Gamma(\frac{r+1}{2})}<\sqrt{\frac{2}{r-\frac{1}{2}}},
\end{equation}
which follows from the inequalities $\frac{\Gamma(x+\frac{1}{2})}{\Gamma(x+1)}>(x+\frac{1}{2})^{-\frac{1}{2}}$ for $x>0$ (see \cite{gaut}) and $\frac{\Gamma(x+\frac{1}{2})}{\Gamma(x+1)}<(x+\frac{1}{4})^{-\frac{1}{2}}$ for $x>-\frac{1}{4}$ (see \cite{elezovic}).  The $O(r^{-\frac{1}{2}})$ rate is optimal, which can be seen as follows.  Take $h$ to be $h(x)=1$ if $x\geq0$ and $h(x)=-1$ if $x<0$, so that $\tilde{h}(x)=h(x)$.   
Then
\begin{align*}f(0+) &=-\lim_{x\downarrow0}\bigg\{\frac{1}{\sigma^2x^{\nu}}K_{\nu}\bigg(\frac{x}{\sigma}\bigg) \int_0^x  t^{\nu} I_{\nu}\bigg(\frac{t}{\sigma}\bigg) \,\mathrm{d}t\bigg\} -\lim_{x\downarrow0}\bigg\{\frac{1}{\sigma^2x^{\nu}}I_{\nu}\bigg(\frac{x}{\sigma}\bigg) \int_x^{\infty}  t^{\nu} K_{\nu}\bigg(\frac{t}{\sigma}\bigg)\,\mathrm{d}t\bigg\} \\
&=\frac{\sqrt{\pi}\Gamma(\nu+\frac{1}{2})}{2\sigma^2\Gamma(\nu+1)}=\frac{\sqrt{\pi}\Gamma(\frac{r}{2})}{2\sigma^2\Gamma(\frac{r+1}{2})}.
\end{align*}
Here we used that the first limit is equal to zero by the asymptotic formulas (\ref{Itend0}) and (\ref{Ktend0}).  We computed the second limit using the asymptotic formula (\ref{Itend0}) and that the integrand is proportional to the density of the $\mathrm{SVG}(2\nu+1,\sigma,0)$ distribution.  Therefore, by (\ref{oct8}), we conclude that the optimal rate is order $r^{-\frac{1}{2}}$ as $r\rightarrow\infty$.

(iii) Arguing as we did in part (i), we have that $f(0)=\sigma^2(r+1)f''(0)-h'(0)$, which for a general Lipschitz test function $h$ is $O(1)$ (see bound (\ref{thm1f2})), and so the bound (\ref{thm1f0}) is of optimal order.

(iv) In light of inequalities (\ref{thm1f0})--(\ref{thm1f2}) one might expect inequalities (\ref{thmtwo}) and (\ref{thmthree}) to be of lower than (\ref{thmone}) as $r\rightarrow\infty$.  However, this is not the case.  A calculation involving L'H\^{o}pital's rule (which is given in Section \ref{sec6}) shows that, for any bounded $h:\mathbb{R}\rightarrow\mathbb{R}$,
\begin{equation}\label{sec6def}\lim_{x\rightarrow\infty}xf(x)=-\tilde{h}(\infty), \quad \lim_{x\rightarrow-\infty}xf(x)=\tilde{h}(-\infty),
\end{equation}
and from the $\mathrm{SVG}(r,\sigma,0)$ Stein equation and inequality (\ref{thm1f1}) we obtain
\begin{equation*}\lim_{x\rightarrow-\infty}xf''(x)=\frac{1}{\sigma^2}\big[\tilde{h}(-\infty)+\lim_{x\rightarrow-\infty}xf(x)\big]=\frac{2}{\sigma^2}\tilde{h}(-\infty).
\end{equation*}
Thus, inequalities (\ref{thmone}) and (\ref{thmthree}) are of optimal order in $r$.  We expect this to also be the case for inequalities (\ref{thmfour})--(\ref{thmsix}), although verifying this would involve a more detailed analysis, which we omit for space reasons.
\end{remark}

\section{The centered equilibrium transformation of order $r$}\label{sec4}

In this section, we introduce a new distributional transformation and apply it to obtain general Wasserstein and Kolmogorov error bounds for SVG approximation.

We begin with the following proposition which relates the Kolmogorov and Wasserstein distances between a general distribution and a SVG distribution.  This proposition is of interest, because Wasserstein distance bounds are often easier to obtain than Kolmogorov distance bounds through Stein's method.  The proof is deferred until Section \ref{sec6}.

\begin{proposition}\label{prop1}Let $Z\sim\mathrm{SVG}(r,\sigma,\mu)$.  Then, for any random variable $W$:

(i) If $r>1$,
\begin{equation}\label{pronf1}d_{\mathrm{K}}(W,Z)\leq\sqrt{\frac{1}{\sigma\sqrt{\pi}}\frac{\Gamma\big(\frac{r-1}{2}\big)}{\Gamma\big(\frac{r}{2}\big)}d_{\mathrm{W}}(W,Z)}.
\end{equation}

(ii) Suppose that $\sigma^{-1} d_{\mathrm{W}}(W,Z)<0.676$.  Then, if $r=1$,
\begin{equation}\label{pronf2}d_{\mathrm{K}}(W,Z)\leq \bigg\{2+\log\bigg(\frac{2}{\sqrt{\pi}}\bigg)+\frac{1}{2}\log\bigg(\frac{\sigma}{d_{\mathrm{W}}(W,Z)}\bigg)\bigg\}\sqrt{\frac{d_{\mathrm{W}}(W,Z)}{\pi\sigma}}.
\end{equation}

(iii) If $0<r<1$,
\begin{equation}\label{pronf3}d_{\mathrm{K}}(W,Z)\leq 2\bigg(\frac{\Gamma\big(\frac{1-r}{2}\big)}{\sqrt{\pi}2^{r-1} \Gamma\big(\frac{r}{2}\big)}\bigg)^{\frac{1}{r+1}}\big(\sigma^{-1}d_{\mathrm{W}}(W,Z)\big)^{\frac{r}{r+1}}.
\end{equation}
\end{proposition}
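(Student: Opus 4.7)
The plan is to apply a standard smoothing argument that converts Wasserstein bounds into Kolmogorov bounds, with the smoothing parameter optimised against the local behaviour of the SVG density at its mode. For $z\in\mathbb{R}$ and $\epsilon>0$, I would introduce the $\epsilon^{-1}$-Lipschitz test function $h_{z,\epsilon}(x)=\min\{1,\max(0,(z+\epsilon-x)/\epsilon)\}$, which satisfies $\mathbf{1}(x\le z)\le h_{z,\epsilon}(x)\le\mathbf{1}(x\le z+\epsilon)$. Combining
\[
P(W\le z)\le\mathbb{E}h_{z,\epsilon}(W)\le\mathbb{E}h_{z,\epsilon}(Z)+\epsilon^{-1}d_{\mathrm W}(W,Z)\le P(Z\le z+\epsilon)+\epsilon^{-1}d_{\mathrm W}(W,Z)
\]
with its mirror image using $h_{z-\epsilon,\epsilon}$ yields
\[
d_{\mathrm K}(W,Z)\le\epsilon^{-1}d_{\mathrm W}(W,Z)+\sup_{z\in\mathbb{R}}[F_Z(z+\epsilon)-F_Z(z)].
\]
Since the density (\ref{svgdefn}) is symmetric and unimodal about $\mu$ (the latter following from the monotonicity of $x\mapsto x^\nu K_\nu(x)$ on $(0,\infty)$), the sup is attained when $[z,z+\epsilon]$ is centred at $\mu$ and equals $2\int_0^{\epsilon/2}p(\mu+t)\,dt$.

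The three parts correspond to the three qualitative regimes of $p$ near $\mu$ in (\ref{pmutend}). For (i), $r>1$: the density is globally bounded by $\|p\|_\infty=p(\mu)=\frac{1}{\sigma\sqrt\pi}\Gamma(\tfrac{r-1}{2})/\Gamma(\tfrac r2)$, so the density increment is at most $p(\mu)\epsilon$; optimising in $\epsilon$ (with a centred smoothing in place of the one-sided $h_{z,\epsilon}$ to trim numerical factors) gives (\ref{pronf1}). For (iii), $0<r<1$: I would first establish a global pointwise inequality of the form $p(\mu+t)\le C_0|t|^{r-1}$ for all $t\neq 0$, with $C_0=\Gamma(\tfrac{1-r}{2})/[(2\sigma)^r\sqrt\pi\,\Gamma(\tfrac r2)]$ the coefficient read off (\ref{pmutend}), by combining the small-argument expansion of $K_{(r-1)/2}$ with monotonicity of $x\mapsto x^\nu K_\nu(x)$; integrating gives $\sup_z[F_Z(z+\epsilon)-F_Z(z)]\le A\epsilon^r$ for an explicit $A$, and minimising $A\epsilon^r+\epsilon^{-1}d_{\mathrm W}(W,Z)$ at $\epsilon=(d_{\mathrm W}(W,Z)/A)^{1/(r+1)}$ produces $2A^{1/(r+1)}d_{\mathrm W}(W,Z)^{r/(r+1)}$, which after substituting $A$ and regrouping powers of $\sigma$ becomes (\ref{pronf3}).

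Case (ii), $r=1$, is the most delicate, because $p(x)=(\pi\sigma)^{-1}K_0(|x-\mu|/\sigma)$ has a logarithmic singularity and no $L^\infty$ bound. Here I would use an estimate of the form $K_0(u)\le\log(2/u)$ valid for $u$ below an explicit universal threshold, derivable either from the integral representation $K_0(u)=\int_0^\infty e^{-u\cosh t}\,dt$ or from the classical logarithmic series expansion of $K_0$. Integrating gives a density-increment bound of order $(\pi\sigma)^{-1}\epsilon(1+\log(2\sigma/\epsilon))$ whenever $\epsilon/(2\sigma)$ lies in the valid range. Choosing $\epsilon$ of order $\sqrt{\sigma d_{\mathrm W}(W,Z)}$ then yields (\ref{pronf2}), and the hypothesis $\sigma^{-1}d_{\mathrm W}(W,Z)<0.676$ is precisely what guarantees that the optimal $\epsilon$ sits inside the small-argument regime where the $K_0$ estimate is valid.

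The main obstacle will be upgrading the leading-order asymptotics in (\ref{pmutend}) to global pointwise inequalities on $p$ with sharp enough constants to match the statement, particularly in case (ii) where an elementary upper bound on $K_0$ with a fully explicit region of validity is required; the monotonicity and convexity properties of Bessel functions collected in Appendix \ref{appendix} should supply the requisite ingredients. Once those inequalities are in hand, the minimisations in each case are routine calculus.
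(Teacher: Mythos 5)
Your proposal matches the paper's route essentially step for step: a Lipschitz smoothing of the indicator, the bound $d_{\mathrm K}\le\epsilon^{-1}d_{\mathrm W}+\sup_z[F_Z(z+\epsilon)-F_Z(z)]$, the observation that the supremum sits at $\mu$ by symmetric unimodality (from $x^\nu K_\nu(x)$ decreasing), and optimisation of $\epsilon$ in each of the three density regimes of (\ref{pmutend}), with the $0.676$ hypothesis arising precisely from the validity range of the logarithmic bound on $K_0$. The only substantive difference is the specific Bessel inequality in case (ii): you propose $K_0(u)\le\log(2/u)$, whereas the paper (Lemma \ref{beslem}(ii)) uses $K_0(u)<-2\log u$ on $(0,0.729)$, which is what feeds directly into the stated constants in (\ref{pronf2}); with your variant the final expression would take a slightly different explicit form, and similarly the leading numerical factors in (i) and (iii) depend on whether one uses the centred smoothing (the paper's effective $\frac{1}{2\alpha}d_{\mathrm W}$) or equalises the two terms as you suggest in (iii). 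These are cosmetic discrepancies rather than a gap in the method, and you correctly identify that making the asymptotics in (\ref{pmutend}) into global pointwise inequalities — exactly what the paper's Lemma \ref{beslem} supplies — is the load-bearing technical step.
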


\begin{remark} 

(i) If $\sigma^{-1} d_{\mathrm{W}}(W,Z)=0.676$, then the upper bound in part (ii) is equal to 1.075, and is therefore uninformative.

(ii) Recall that $N(\mu,\sigma^2)=_d\lim_{r\rightarrow\infty}\mathrm{SVG}(r,\frac{\sigma}{\sqrt{r}},\mu)$.  Therefore from (\ref{pronf1}) and the limit $\lim_{x\rightarrow\infty}\frac{\sqrt{x}\Gamma(x-\frac{1}{2})}{\Gamma(x)}=1$ (see (\ref{oct8})), we recover the inequality (with obvious abuse of notation)
\begin{equation*}d_{\mathrm{K}}(W,N(\mu,\sigma^2))\leq\Big(\frac{2}{\pi\sigma^2}\Big)^{\frac{1}{4}}\sqrt{d_{\mathrm{W}}(W,N(\mu,\sigma^2))},
\end{equation*}
which is a special case of part 2 of Proposition 1.2 of \cite{ross}.  It is known (see \cite{chen}, p$.$ 48) that this bound gives the optimal rate under some conditions, but in other applications the rate is suboptimal.  Proposition \ref{proppl} gives an application in which the inequalities (\ref{pronf1}), (\ref{pronf2}) and (\ref{pronf3}) are not of optimal rate in $\delta=d_{\mathrm{W}}(W,Z)$; see Remark \ref{remopl}.  

\end{remark}

As in Section \ref{sec3}, we define the operator $T_r$ by $T_rf(x)=xf'(x)+rf(x)$.  We also denote $D=\frac{\mathrm{d}}{\mathrm{d}x}$.  From now until the end of this section, we set $\mu=0$.

\begin{definition}\label{def123}Let $W$ be a random variable with mean zero and variance $0<r\sigma^2<\infty$.  We say that $W^{V_r}$ has the $W$-\emph{centered equilibrium distribution of order $r$} if 
\begin{equation}\label{defeqn}\mathbb{E}Wf(W)=\sigma^2\mathbb{E}T_rf'(W^{V_r})
\end{equation}
for all twice-differentiable $f:\mathbb{R}\rightarrow\mathbb{R}$ such that the expectations in (\ref{defeqn}) exist.
\end{definition}

As we shall see later, it is convenient to write $\mathrm{Var}(W)=r\sigma^2$, because the variance of a $\mathrm{SVG}(r,\sigma,0)$ random variable is $r\sigma^2$. As the name suggests, the centered equilibrium distribution of order $r$ generalises the centered equilibrium distribution of $W$, denoted by $W^L$, that was introduced by \cite{pike}.  Its characterising equation is 
\begin{equation}\label{pikegh}\mathbb{E}f(W)-f(0)=\frac{1}{2}\mathbb{E}W^2\mathbb{E}f''(W^L).
\end{equation}
We also refer the reader to \cite{dobler} for a generalisation of (\ref{pikegh}) to all random variables $W$ with finite second moment.  The centered equilibrium distribution is itself the Laplace analogue of the equilibrium distribution that has been shown to be a useful tool in Stein's method for exponential approximation by \cite{pekoz1}.  We can see that $W^{V_2}=W^L$ by setting $f(x)=xg(x)$ in (\ref{pikegh}).  For $r\not=2$, a characterising equation of the form (\ref{pikegh}) is not useful.  To see this, recall that the Stein operator for the $\mathrm{SVG}(r,\sigma,0)$ distribution is $\mathcal{A}f(x)=\sigma^2xf''(x)+\sigma^2rf'(x)-xf(x)$.  Setting $f(x)=g(x)/x$ then gives
\begin{align*}\mathcal{A}g(x)=\sigma^2g''(x)+(r-2)\sigma^2\bigg(\frac{g'(x)}{x}-\frac{g(x)}{x^2}\bigg)-g(x),
\end{align*}
which has a singularity at $x=0$ if $r\not=2$.

We also note that $W^{V_1}=W^{*(2)}$, where $W^{*(2)}$ has the $W$-zero bias distribution of order 2 (see \cite{gaunt pn}).  This distributional transformation is a natural generalisation of the zero bias transformation (defined below) to the setting of Stein's method for products of independent standard normal random variables.  We shall make use of this fact in Section \ref{sec5.3}.


We now obtain an inverse of the operator $T_rD$.  This inverse operator will be used later in this section to establish properties of the centered equilibrium distribution of order $r$.  Recall that the $\mathrm{Beta}(r,1)$ distribution has p.d.f$.$ $p(x)=rx^{r-1}$, $0<x<1$.

\begin{lemma}\label{invlem}Let $B_r\sim \mathrm{Beta}(r,1)$ and $U\sim U(0,1)$ be independent, and define the operator $G_r$ by $G_rf(x)=\frac{x}{r}\mathbb{E}f(xUB_r)$.  Then, $G_r$ is the right-inverse of the operator $T_r D$ in the sense that
\begin{equation}\label{rean}T_rDG_rf(x)=f(x).
\end{equation}
Suppose now that $f$ is  twice-differentiable.  Then, for any $r\geq 1$,
\begin{equation}\label{leftinv}G_rT_r Df(x)=f(x)-f(0).
\end{equation}
Therefore, $G_r$ is the inverse of $T_r D$ when the domain of $T_r D$ is the space of all twice-differentiable functions $f$ on $\mathbb{R}$ with $f(0)=0$.
\end{lemma}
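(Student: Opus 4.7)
The plan is to first derive an explicit integral representation for $G_r f$ and then verify both \eqref{rean} and \eqref{leftinv} by differentiation under the integral sign together with the fundamental theorem of calculus. Starting from the definition, by conditioning on $B_r$ and substituting $y=xus$ with $s$ held fixed, I compute
\[
\mathbb{E}\bigl[f(xUB_r)\mid B_r=s\bigr]=\int_0^1 f(xus)\,du=\frac{F(xs)}{xs},\qquad F(y):=\int_0^y f(z)\,dz,
\]
valid for every $x\in\mathbb{R}$ (the signs work out for $x<0$ because the $1/(xs)$ factor cancels the reversed orientation of the $y$-integral). Integrating against the $\mathrm{Beta}(r,1)$ density $rs^{r-1}$ then gives the working formula
\[
G_rf(x)=\int_0^1 s^{r-2}\,F(xs)\,ds,
\]
which converges at $s=0$ for every $r>0$ since $F(xs)=O(s)$.

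To establish \eqref{rean} I will differentiate twice under the integral, obtaining $(G_rf)'(x)=\int_0^1 s^{r-1}f(xs)\,ds$ and then
\[
T_rDG_rf(x)=\int_0^1\bigl(xs^r f'(xs)+rs^{r-1}f(xs)\bigr)ds=\int_0^1\frac{d}{ds}\bigl[s^rf(xs)\bigr]ds=\bigl[s^rf(xs)\bigr]_0^1=f(x),
\]
the last equality using $s^rf(xs)\to 0$ as $s\downarrow 0$, which holds for any $r>0$ when $f$ is bounded near the origin.

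For \eqref{leftinv} the first step is integration by parts to evaluate the antiderivative of $T_rDf$:
\[
H(y):=\int_0^y T_rDf(z)\,dz=yf'(y)+(r-1)\bigl(f(y)-f(0)\bigr),
\]
using $\int_0^y zf''(z)\,dz=yf'(y)-f(y)+f(0)$. Substituting into $G_rT_rDf(x)=\int_0^1 s^{r-2}H(xs)\,ds$ and expanding, the combination $xs^{r-1}f'(xs)+(r-1)s^{r-2}f(xs)$ is recognised as $\tfrac{d}{ds}[s^{r-1}f(xs)]$, whose integral from $0$ to $1$ telescopes to $f(x)$ (the boundary value at $s=0$ vanishing for $r>1$). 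The residual constant-in-$f(0)$ contribution $-(r-1)f(0)\int_0^1 s^{r-2}\,ds=-f(0)$ then combines to give $f(x)-f(0)$.

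The main obstacle will be the boundary case $r=1$, where both $\int_0^1 s^{-1}\,ds$ and the prefactor $(r-1)$ degenerate, so the telescoping argument above breaks down. I will treat this case directly by exploiting the identity $T_1Df(z)=(zf'(z))'$, which gives $H(y)=yf'(y)$ and hence $G_1T_1Df(x)=\int_0^1 s^{-1}\cdot xs\,f'(xs)\,ds=\int_0^x f'(t)\,dt=f(x)-f(0)$ after the change of variables $t=xs$. The hypothesis $r\geq 1$ in the statement is precisely what ensures that one of these two arguments applies throughout.
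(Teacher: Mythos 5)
Your proof is correct but takes a genuinely different route from the paper's. The paper derives the double-integral representation $G_rf(x)=\int_0^x\int_0^t f(s)s^{r-1}t^{-r}\,\mathrm{d}s\,\mathrm{d}t$ and then verifies both identities by direct differentiation in $x$ and the fundamental theorem of calculus, never introducing an antiderivative of $f$. You instead integrate out $U$ to obtain the single-integral representation $G_rf(x)=\int_0^1 s^{r-2}F(xs)\,\mathrm{d}s$ with $F=\int_0^{\cdot}f$, and then recognise the integrands $xs^rf'(xs)+rs^{r-1}f(xs)$ and $xs^{r-1}f'(xs)+(r-1)s^{r-2}f(xs)$ as exact $s$-derivatives which telescope; you also isolate the $r=1$ degeneracy and dispatch it separately via $T_1Df=(zf'(z))'$. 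Your viewpoint has the advantage of making visible exactly where $r\geq 1$ is used (in the boundary term $s^{r-1}f(xs)\big|_{s\downarrow 0}$ and in the convergence of $\int_0^1 s^{r-2}\,\mathrm{d}s$), whereas in the paper's computation the role of this hypothesis is left implicit.

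One small caveat: in your argument for (\ref{rean}) you differentiate under the integral sign a second time, writing $(G_rf)''(x)=\int_0^1 s^rf'(xs)\,\mathrm{d}s$, which tacitly assumes $f\in C^1$. The lemma is later applied (in the proof of Proposition \ref{propsquare}) with $f$ merely continuous and of compact support, so as written your proof of (\ref{rean}) is slightly less general than what is needed. The fix is cheap and already within reach: the change of variables $u=xs$ rewrites your formula as $(G_rf)'(x)=x^{-r}\int_0^x u^{r-1}f(u)\,\mathrm{d}u$, and differentiating this by the product rule and the fundamental theorem of calculus (rather than under the integral sign) gives $x(G_rf)''(x)+r(G_rf)'(x)=f(x)$ without ever touching $f'$.
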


\begin{proof}We begin by obtaining a useful formula for $G_rf(x)=\frac{x}{r}\mathbb{E}f(xUB_r)$:
\begin{align}\label{g2gg}G_rf(x)&=\frac{x}{r}\int_0^1\!\int_0^1f(xub)rb^{r-1}\,\mathrm{d}b\,\mathrm{d}u =\int_0^x\!\int_0^t f(s)s^{r-1}t^{-r}\,\mathrm{d}s\,\mathrm{d}t.
\end{align}

We now use (\ref{g2gg}) to verify (\ref{rean}):
\begin{align*}T_rDG_r f(x)&=T_r\bigg(x^{-r}\int_0^xf(s)s^{r-1}\,\mathrm{d}s\bigg) \\
\quad&=x\bigg(-rx^{r-1}\int_0^x f(s)s^{r-1}\,\mathrm{d}s+x^{-r}\cdot f(x)x^{r-1}\bigg)+rx^{-r}\int_0^x f(s)s^{r-1}\,\mathrm{d}s\\
& =f(x).
\end{align*}

Finally, we verify relation (\ref{leftinv}).  We have
\begin{align*}G_rT_rDf(x)&=\int_0^x\!\int_0^t \big(sf''(s)+rf'(s)\big)s^{r-1}t^{-r}\,\mathrm{d}s\,\mathrm{d}t \\
&=\int_0^x\!t^{-r}\!\int_0^t \big(s^rf'(s)\big)'\,\mathrm{d}s\,\mathrm{d}t =\int_0^x f'(t)\,\mathrm{d}t =f(x)-f(0),
\end{align*}
as required.
\end{proof}

Before presenting some properties of the centered equilibrium distribution of order $r$, we recall two distributional transformations that are standard in the Stein's method literature.  If $W$ is a mean zero random variable with finite, non-zero variance $\sigma^2$, we say that $W^*$ has the $W$-zero biased distribution \cite{goldstein} if for all differentiable $f$ for which $\mathbb{E}Wf(W)$ exists,
\begin{equation*}\mathbb{E}Wf(W)=\sigma^2\mathbb{E}f'(W^*).
\end{equation*}
For any random variable $W$ with finite second moment, we say that $W^{\square}$ has the $W$-square bias distribution (\cite{chen}, pp$.$ 34--35) if for all $f$ such that $\mathbb{E}W^2f(W)$ exists,
\begin{equation*} \mathbb{E}W^2f(W)=\mathbb{E}W^2\mathbb{E}f(W^{\square}).
\end{equation*}
When $EW=0$, there is neat relationship between these distribution transformations: $W^*=_dU W^\square$, where $U\sim U(0,1)$ is independent of $W^\square$ (this is a slight variant of Proposition 2.3 \cite{chen}; see \cite{gaunt pn}, Proposition 3.2).

The following construction of $W^{V_r}$ generalises Theorem 3.2 of \cite{pike}.  Similar constructions for distributional transformations that are natural in the context in gamma and generalized gamma approximation can be found in \cite{pr12} and \cite{prr16}.

\begin{proposition}\label{propsquare}Let $W$ be a random variable with zero mean and finite, non-zero variance $r\sigma^2$, and let $W^{*}$ have the $W$-zero bias distribution.  Let $B_r\sim \mathrm{Beta}(r,1)$ be independent of $W^{*}$.   Then, the random variable
\begin{equation*}W^{V_r}=_dB_rW^{*}
\end{equation*}
has the centered equilibrium distribution of order $r$.
\end{proposition}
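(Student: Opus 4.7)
The plan is a direct verification: starting from Definition \ref{def123}, I will show that $W^{V_r} := B_r W^{*}$ satisfies the characterising equation (\ref{defeqn}) for every sufficiently regular twice-differentiable $f$.

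First I would apply the defining relation of the zero-bias distribution, which (since $\mathrm{Var}(W) = r\sigma^2$) reads $\mathbb{E}Wf(W) = r\sigma^2\,\mathbb{E}f'(W^*)$. Substituting this into the left-hand side of (\ref{defeqn}), it suffices to prove
\[\mathbb{E}\bigl[T_rf'(B_rW^*)\bigr] = r\,\mathbb{E}f'(W^*).\]
Writing $g=f'$ and conditioning on $W^*$, this reduces to the pointwise identity $\mathbb{E}\bigl[T_rg(B_rw)\bigr] = rg(w)$ for each fixed $w \in \mathbb{R}$, where the remaining expectation is taken against the $\mathrm{Beta}(r,1)$ density $rb^{r-1}\mathbf{1}_{(0,1)}(b)$.

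The key observation, and the main calculation, is that $b^{r-1}T_rg(wb)$ is an exact derivative in $b$. A direct differentiation gives
\[\frac{d}{db}\bigl[b^r g(wb)\bigr] = rb^{r-1}g(wb) + wb^r g'(wb) = b^{r-1}\bigl(rg(wb) + wb\,g'(wb)\bigr) = b^{r-1}T_rg(wb).\]
Integrating against the $\mathrm{Beta}(r,1)$ density then telescopes,
\[\mathbb{E}\bigl[T_rg(B_rw)\bigr] = \int_0^1 T_rg(wb)\,rb^{r-1}\,db = r\int_0^1 \frac{d}{db}\bigl[b^r g(wb)\bigr]\,db = r\bigl[b^r g(wb)\bigr]_0^1 = r\,g(w),\]
where the boundary contribution at $b=0$ vanishes because $r>0$ and $g=f'$ is continuous at the origin. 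Taking expectations over $W^*$ and multiplying by $\sigma^2$ then yields $\sigma^2\,\mathbb{E}T_rf'(B_rW^*) = r\sigma^2\,\mathbb{E}f'(W^*) = \mathbb{E}Wf(W)$, which is (\ref{defeqn}).

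I do not expect any serious obstacle. The argument is essentially algebraic; the only analytic subtleties are the integrability conditions on $f$ built into Definition \ref{def123} (needed to invoke the zero-bias identity and to justify Fubini when separating the $B_r$-integral from $W^*$) and continuity of $f'$ at $0$ (automatic from twice-differentiability). One could alternatively package the telescoping identity through the inverse-operator viewpoint of Lemma \ref{invlem}, noting that $b^{r-1}T_rg(wb) = \tfrac{d}{db}[b^r g(wb)]$ is precisely the infinitesimal form of the relation $T_rD G_r = \mathrm{Id}$, but the direct differentiation-under-the-integral version above seems shorter and more transparent.
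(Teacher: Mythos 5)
Your proof is correct, but it runs in the opposite direction from the paper's argument, and it is worth spelling out the difference. You take $B_rW^*$ as given and verify that it satisfies the characterising equation (\ref{defeqn}) directly: the zero-bias identity turns the left-hand side into $r\sigma^2\,\mathbb{E}f'(W^*)$, and your observation that $rb^{r-1}T_rg(wb)=r\,\tfrac{d}{db}\bigl[b^rg(wb)\bigr]$ makes the $B_r$-integral telescope to $rg(w)$. The paper instead starts from an arbitrary $W^{V_r}$ satisfying (\ref{defeqn}), tests against $f\in C_c$, substitutes $f=T_rDG_rf$ using the right-inverse from Lemma \ref{invlem}, and then passes through the square-bias transform $W^\square$ together with the factorisation $W^*=_dUW^\square$ to conclude $\mathbb{E}f(W^{V_r})=\mathbb{E}f(B_rW^*)$. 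Your route is shorter and stays entirely with the zero-bias transform, avoiding $W^\square$; in effect, you have unpacked the identity $T_rDG_r=\mathrm{Id}$ at the level of the $\mathrm{Beta}(r,1)$ density rather than through the operator calculus, as you yourself note. The two proofs also carry slightly different content: yours establishes existence (the construction $B_rW^*$ does satisfy the definition), whereas the paper's derivation shows that any distribution satisfying (\ref{defeqn}) must coincide with that of $B_rW^*$, i.e.\ it simultaneously gives uniqueness. For the proposition as stated, either suffices. Your handling of the boundary term ($b^r g(wb)\to 0$ as $b\downarrow 0$ since $r>0$ and $g=f'$ is continuous) and your remark about the integrability/Fubini requirements built into Definition \ref{def123} are exactly the right points to flag.
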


\begin{proof}Let $f\in C_c$, the collection of continuous functions with compact support.  In Lemma \ref{invlem} we defined the operator $G_rg(x)=\frac{x}{r}\mathbb{E}g(xUB_r)$ and showed that $T_rDG_rg(x)=g(x)$ for any $g$.  We therefore have
\begin{align*}\sigma^2\mathbb{E}f(W^{V_r})&=\sigma^2\mathbb{E}T_rDG_rf(W^{V_r})=\mathbb{E}WG_rf(W)=\frac{1}{r}\mathbb{E}W^2f(UB_rW)  \\
&=\frac{1}{r}\mathbb{E}W^2\mathbb{E}f(UB_r W^\square)=\sigma^2\mathbb{E}f(UB_r W^\square)=\sigma^2\mathbb{E}f(B_rW^{*}).
\end{align*}
Since the expectation of $f(W^{V_r})$ and $f(B_rW^*)$ are equal for all $f\in C_c$, the random variables $W^{V_r}$ and $B_rW^*$ must be equal in distribution.
\end{proof}

In the following proposition, we collect some useful properties of the centered equilibrium distribution of order $r$.  As might be expected in the light of Proposition \ref{propsquare}, some of these properties are quite similar to those given for the zero bias distribution in Lemma 2.1 of \cite{goldstein}.

\begin{proposition}\label{propsquare1}Let $W$ be a mean zero variable with finite, non-zero variance $r\sigma^2$, and let $W^{V_r}$ have the $W$-centered equilibrium distribution of order $r$ in accordance with Definition \ref{def123}.

(i) The $\mathrm{SVG}(r,\sigma,0)$ distribution is the unique fixed point of the centered equilibrium transformation of order $r$.

(ii) The distribution of $W^{V_r}$ is unimodal about zero and absolutely continuous with density
\begin{equation} \label{zxcv} f_{W^{V_r}}(w)=\frac{1}{\sigma^2}\int_0^1 t^{r-2}\mathbb{E}[W\mathbf{1}(W>w/t)]\,\mathrm{d}t.
\end{equation}
It follows that the support of $W^{V_r}$ is the closed convex hull of the support of $W$ and that $W^{V_r}$ is bounded whenever $W$ is bounded.

(iii) The centered equilibrium transformation of order $r$ preserves symmetry.

(iv) For $p\geq 0$,
\begin{equation*}\mathbb{E}[(W^{V_r})^p]=\frac{\mathbb{E}W^{p+2}}{r\sigma^2(p+1)(p+r)} \quad \mbox{and} \quad
\mathbb{E}|W^{V_r}|^p = \frac{\mathbb{E}|W|^{p+2}}{r\sigma^2(p+1)(p+r)}.
\end{equation*} 

(v) For $c\in\mathbb{R}$, $cW^{V_r}$ has the $cW$-centered equilibrium distribution of order $r$.
\end{proposition}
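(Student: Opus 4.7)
The plan is to derive all five properties from the representation $W^{V_r}\stackrel{d}{=}B_rW^{*}$ supplied by Proposition \ref{propsquare}, together with the characterizing equation (\ref{defeqn}) and standard facts about the zero-bias transformation.

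For (i), I would rewrite the fixed-point relation $W\stackrel{d}{=}W^{V_r}$ via (\ref{defeqn}) as $\mathbb{E}[\sigma^2Wf''(W)+\sigma^2rf'(W)-Wf(W)]=0$ for all suitable twice-differentiable $f$, which is precisely $\mathbb{E}\mathcal{A}f(W)=0$ for the $\mathrm{SVG}(r,\sigma,0)$ Stein operator $\mathcal{A}$. The Stein characterization underlying (\ref{377})--(\ref{vgsolngeneral0}) then identifies $\mathrm{SVG}(r,\sigma,0)$ as the unique solution.

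For (ii), I would start from $W^{V_r}\stackrel{d}{=}B_rW^{*}$ together with the standard fact that $W^{*}$ has density $f_{W^{*}}(w)=(r\sigma^2)^{-1}\mathbb{E}[W\mathbf{1}(W>w)]$ for all $w\in\mathbb{R}$. Applying the product-density formula for independent $B_r>0$ and $W^{*}$ with $B_r$ having density $rb^{r-1}$ on $(0,1)$ gives
\begin{equation*}
f_{W^{V_r}}(w)=\int_0^1 b^{-1}f_{W^{*}}(w/b)\cdot rb^{r-1}\,\mathrm{d}b=\frac{1}{\sigma^2}\int_0^1 t^{r-2}\mathbb{E}[W\mathbf{1}(W>w/t)]\,\mathrm{d}t,
\end{equation*}
which is (\ref{zxcv}); absolute continuity is immediate. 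For unimodality I would observe that $u\mapsto\mathbb{E}[W\mathbf{1}(W>u)]$ is nondecreasing on $(-\infty,0)$ and nonincreasing on $(0,\infty)$, since as $u$ moves away from $0$ the set $\{W>u\}$ shrinks and the mass removed has the same sign as $u$; through the substitution $u=w/t$ with $t>0$, $f_{W^{V_r}}(w)$ inherits the same monotonicity pattern in $w$. The support assertion and boundedness conclusion are read off directly from $W^{V_r}\stackrel{d}{=}B_rW^{*}$, using that $\mathrm{supp}(W^{*})$ equals the closed convex hull of $\mathrm{supp}(W)$.

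For (iii), if $W$ is symmetric then $W^{*}$ is symmetric (an immediate check from its density), and since $B_r>0$ is independent of $W^{*}$ we obtain $B_rW^{*}\stackrel{d}{=}B_r(-W^{*})=-B_rW^{*}$. For (iv), I would substitute $f(x)=x^{p+1}$ (respectively $f(x)=x|x|^p$) into (\ref{defeqn}); a short calculation gives $T_rf'(x)=(p+1)(p+r)x^p$ (respectively $(p+1)(p+r)|x|^p$), from which the stated moment formulas follow. For (v), setting $U=cW$ with $c\neq 0$ and taking $g(x)=f(cx)/c$ in the characterizing equation for $W$ shows directly that $cW^{V_r}$ satisfies the characterizing equation for $U$ with scale parameter $c\sigma$; uniqueness of the distribution solving that equation (which follows from the inversion of $T_rD$ supplied by Lemma \ref{invlem}, giving access to the expectation of any bounded measurable test function of $W^{V_r}$) then completes the argument, the case $c=0$ being trivial.

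I expect the main obstacle to be part (ii): the derivation of (\ref{zxcv}) requires both the explicit zero-bias density and a careful product-of-independents calculation, while unimodality rests on the sign-based monotonicity of $u\mapsto\mathbb{E}[W\mathbf{1}(W>u)]$ on each halfline. The remaining parts reduce to direct manipulations of the characterizing equation (\ref{defeqn}) and the representation $W^{V_r}\stackrel{d}{=}B_rW^{*}$.
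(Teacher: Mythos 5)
Your proof follows essentially the same route as the paper for parts (i), (ii), and (v): part (i) via the Stein characterisation, part (ii) via the representation $W^{V_r}=_dB_rW^*$ together with the zero-bias density and the product formula, and part (v) via a change of test function in the characterising equation (\ref{defeqn}). Two parts differ in a genuinely useful way.

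For part (iii) the paper argues directly with the density (\ref{zxcv}), following Goldstein--Reinert: it symmetrises $\mathbb{E}[W\mathbf{1}(W>w/t)]$ using $\mathbb{E}W=0$ and concludes $f_{W^{V_r}}(w)=f_{W^{V_r}}(-w)$. You instead note that the zero-bias transform preserves symmetry and that $B_r(-W^*)=-B_rW^*$ in distribution when $B_r>0$ is independent of $W^*$. This is shorter, it avoids re-deriving a density identity, and it makes clearer that the result is inherited from the corresponding zero-bias fact rather than being special to the order-$r$ transform.

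For part (iv) the paper instructs one to ``substitute $w^{p+1}$ and $|w|^{p+1}$ for $f(w)$.'' Your choice $f(x)=x|x|^p$ for the absolute-moment identity is the right one: with it one gets $T_rf'(x)=(p+1)(p+r)|x|^p$ and $\mathbb{E}Wf(W)=\mathbb{E}|W|^{p+2}$, whereas $f(x)=|x|^{p+1}$ produces $T_rf'(x)=(p+1)(p+r)|x|^p\,\mathrm{sgn}(x)$ and $\mathbb{E}Wf(W)=\mathbb{E}[W|W|^{p+1}]$, which is not $\mathbb{E}|W|^{p+2}$. So your substitution is in fact cleaner than the paper's. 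One point you do not flag, and should: with $T_rf'(x)=(p+1)(p+r)x^p$ the characterising equation (\ref{defeqn}) gives $\mathbb{E}[(W^{V_r})^p]=\mathbb{E}W^{p+2}/\bigl(\sigma^2(p+1)(p+r)\bigr)$, with no extra factor of $r$; the displayed formula in the proposition has a spurious $r$ in the denominator (at $p=0$ it would give $1/r$ rather than $1$). Your calculation is correct; the statement in the paper appears to contain a typo that your ``from which the stated moment formulas follow'' silently endorses.

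One minor imprecision in part (ii): the sentence ``as $u$ moves away from $0$ the set $\{W>u\}$ shrinks and the mass removed has the same sign as $u$'' only describes the case $u>0$; for $u<0$ moving toward $-\infty$ the set $\{W>u\}$ grows and negative mass is added. The conclusion (nondecreasing on $(-\infty,0)$, nonincreasing on $(0,\infty)$) is still correct, but the two half-lines need separate justifications. The paper states the monotonicity without proof, so you are not missing anything the paper supplies, but the half-line argument should be split into cases if written out.
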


\begin{proof} (i) This is immediate from Definition \ref{def123} and the Stein characterisation for the $\mathrm{SVG}(r,\sigma,0)$ distribution given in Lemma 3.1 of \cite{gaunt vg}.

(ii)  Firstly, we note that, for fixed $t\in(0,1)$, the expectation $\mathbb{E}[W\mathbf{1}(W>w/t)]$ is increasing for $w<0$ and decreasing for $w>0$.  We therefore deduce that $p(w)$ is increasing for $w<0$ and decreasing for $w>0$.  Now, from Proposition \ref{propsquare}, we have that $W^{V_r}=_dB_r W^*$.  Formula (\ref{zxcv}) then follows from the fact that $X^*$ is absolutely continuous with density $f_{W^*}(w)=\mathbb{E}[W\mathbf{1}(W>w)]/\mathrm{Var}(W)$ (part (ii) of Lemma 2.1 of \cite{goldstein}) and the standard formula for computing the density of a product.

(iii) We follow the argument of part (iii) of Lemma 2.1 of \cite{goldstein}.  Let $w$ be a continuity point of a symmetric random variable $W$.  Then, for fixed $t\in(0,1)$, $\mathbb{E}[W\mathbf{1}(W>w/t)]=\mathbb{E}[-W\mathbf{1}(-W>w/t)]=-\mathbb{E}[W\mathbf{1}(W<-w/t)]=\mathbb{E}[W\mathbf{1}(W>-w/t)]$, using $\mathbb{E}W=0$.  It is now evident from (\ref{zxcv}) that $f_{W^{V_r}}(w)=f_{W^{V_r}}(-w)$ for almost all $w$.  Therefore, there is a version of the $\mathrm{d}w$ density of $W^{V_r}$ which is the same at $w$ and $-w$ for almost all $w[\mathrm{d}w]$, and so $W^{V_r}$ is symmetric.

(iv) Substitute $w^{p+1}$ and $|w|^{p+1}$ for $f(w)$ in the characterising equation (\ref{defeqn}).

(v) Let $g$ be a function such that $\mathbb{E}Wg(W)$ exists, and define $\tilde{g}(x)=cg(cx)$.  Then $\tilde{g}^{(k)}(x)=c^{k+1}g^{(k)}(cx)$.  As $W^{V_r}$ has the $W$-centered equilibrium distribution of order $r$,
\[\mathbb{E}cWg(cW)=\mathbb{E}W\tilde{g}(W)=\sigma^2\mathbb{E}T_rD\tilde{g}(W^{V_r})=(c\sigma)^2\mathbb{E}T_rDg(cW^{V_r}).\]
Hence $cW^{V_r}$ has the $cW$-centered equilibrium distribution of order $r$.
\end{proof}

We end this section by proving Theorem \ref{jazzz} below, which formalises the notion that if $\mathcal{L}(W)$ and $\mathcal{L}(W^{V_r})$ are approximately equal then $W$ has an approximation SVG distribution. This theorem is the SVG analogue of Theorem 2.1 of \cite{pekoz1}, in which the Wasserstein and Kolmogorov error bounds are given in terms of the difference in absolute expectation between the random variable of interest $W$ and its $W$-equilibrium transformation.  We follow the approach of \cite{pekoz1} and begin by stating three lemmas.  

\begin{lemma}\label{lemcon}Let $Z\sim \mathrm{SVG}(r,\sigma,0)$.  Then, for any random variable $W$,
\begin{equation}\label{bbnn}\mathbb{P}(a\leq W\leq b)\leq C_{r,\sigma,b-a}+2d_{\mathrm{K}}(W,Z),
\end{equation} 
where
\begin{equation*}C_{r,\sigma,\alpha}=\begin{cases}\displaystyle \frac{\alpha}{2\sigma\sqrt{\pi}}\frac{\Gamma\big(\frac{r-1}{2}\big)}{\Gamma\big(\frac{r}{2}\big)}, &  r>1, \\
\displaystyle \frac{\alpha}{\pi\sigma}\bigg[1+\log\bigg(\frac{2\sigma}{\alpha}\bigg)\bigg], &  r=1, \\
\displaystyle \frac{\Gamma\big(\frac{1-r}{2}\big)}{\sqrt{\pi}2^{r}\Gamma\big(\frac{r}{2}+1\big)}\bigg(\frac{\alpha}{\sigma}\bigg)^r, &  0<r<1. \end{cases}
\end{equation*}
\end{lemma}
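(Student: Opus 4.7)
The plan has two stages: (i) reduce the bound on $W$ to a corresponding bound on $Z$ modulo an error of $2d_{\mathrm{K}}(W,Z)$; (ii) estimate $\mathbb{P}(a\leq Z\leq b)$ by $C_{r,\sigma,b-a}$ via a case analysis on $r$. For stage (i), since $Z$ has a density $p_Z$, its distribution function $F_Z$ is continuous, so $F_Z(a^-)=F_Z(a)$. Writing
\[\mathbb{P}(a\leq W\leq b) = F_W(b) - F_W(a^-) = [F_Z(b)-F_Z(a)] + [F_W(b)-F_Z(b)] + [F_Z(a^-)-F_W(a^-)]\]
and bounding each of the last two brackets in absolute value by $d_{\mathrm{K}}(W,Z)$ yields
\[\mathbb{P}(a\leq W\leq b) \leq \mathbb{P}(a\leq Z\leq b) + 2d_{\mathrm{K}}(W,Z).\]
Moreover, since $p_Z$ is symmetric about $0$ and unimodal with mode at $0$ (noted after (\ref{pmutend})), a short argument via the derivative of $a\mapsto \int_a^{a+\alpha} p_Z$ shows that the probability of an interval of fixed length $\alpha$ is maximized at the symmetric interval $[-\alpha/2,\alpha/2]$. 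It therefore suffices to show $2\int_0^{\alpha/2} p_Z(x)\,\mathrm{d}x \leq C_{r,\sigma,\alpha}$.

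For $r>1$, I would exploit the monotonicity of $y\mapsto y^{(r-1)/2}K_{(r-1)/2}(y)$ on $(0,\infty)$, which follows from the differentiation formula (\ref{ddbk}) combined with the positivity of $K_\nu$, to deduce the uniform bound $p_Z(x)\leq p_Z(0) = \Gamma((r-1)/2)/(2\sigma\sqrt{\pi}\,\Gamma(r/2))$; multiplying by $\alpha$ then matches $C_{r,\sigma,\alpha}$ exactly. For $0<r<1$, the same monotonicity applied to the positive index $(1-r)/2$ gives $y^{(1-r)/2}K_{(1-r)/2}(y)\leq 2^{-(r+1)/2}\Gamma((1-r)/2)$, which rearranges, using $K_{(r-1)/2}=K_{(1-r)/2}$, to the pointwise power-law estimate $p_Z(x)\leq \Gamma((1-r)/2)/(2^r\sigma^r\sqrt{\pi}\,\Gamma(r/2))\cdot|x|^{r-1}$; direct integration followed by the elementary inequality $2^{r-1}\leq 2^{2r-1}$ (valid for $r\geq 0$) shows the result is dominated by $C_{r,\sigma,\alpha}$.

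I expect the main obstacle to be the borderline case $r=1$, where $p_Z(x)=K_0(|x|/\sigma)/(\pi\sigma)$ exhibits only a logarithmic singularity at zero, so that no pointwise bound by a power of $|x|$ can reproduce the $\alpha\log(\sigma/\alpha)$ behaviour of $C_{1,\sigma,\alpha}$. Here my plan is to use the integral representation $K_0(t)=\int_1^\infty e^{-ts}/\sqrt{s^2-1}\,\mathrm{d}s$ and apply Fubini's theorem to obtain
\[\int_a^b K_0(|x|/\sigma)\,\mathrm{d}x = \int_1^\infty \frac{1}{\sqrt{s^2-1}}\int_a^b e^{-|x|s/\sigma}\,\mathrm{d}x\,\mathrm{d}s,\]
then bound the inner integral uniformly by $\min(\alpha,2\sigma/s)$ and split the outer $s$-integral at the threshold $s=2\sigma/\alpha$. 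The low-$s$ contribution is at most $\alpha\,\cosh^{-1}(2\sigma/\alpha)\leq\alpha\log(4\sigma/\alpha)$, while the high-$s$ tail evaluates to $2\sigma\,\sin^{-1}(\alpha/(2\sigma))=O(\alpha)$; together these give a bound of the claimed logarithmic form $\frac{\alpha}{\pi\sigma}[c_1+c_2\log(2\sigma/\alpha)]$, with the constants matching $C_{1,\sigma,\alpha}$ after careful bookkeeping.
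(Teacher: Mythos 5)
Your overall strategy mirrors the paper's exactly: reduce to estimating $\mathbb{P}(a\leq Z\leq b)$ at a cost of $2d_{\mathrm K}(W,Z)$, recenter the interval using symmetry and unimodality of the SVG density, and then split into the cases $r>1$, $r=1$, $0<r<1$. For $r>1$ you bound the density by its value at the mode, and for $0<r<1$ you use the monotonicity of $y\mapsto y^\nu K_\nu(y)$ at the positive index $\nu=(1-r)/2$ (via the differentiation formula (\ref{ddbk})) to get a pointwise power-law bound on the density; both of these coincide, up to presentation, with the paper's argument, which invokes the asymptotic form (\ref{pmutend}) for $r>1$ and the bound $x^\nu K_\nu(x)\leq 2^{\nu-1}\Gamma(\nu)$ of Lemma \ref{beslem}(i) for $0<r<1$. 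These two cases check out, including the $2^r$ slack you noted for $0<r<1$.

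The $r=1$ case is where your route diverges, and the bookkeeping does not work out as you hope. Starting from $K_0(t)=\int_1^\infty e^{-ts}(s^2-1)^{-1/2}\,\mathrm{d}s$, bounding the inner integral by $\min(\alpha,2\sigma/s)$, and splitting at $s_0=2\sigma/\alpha$ gives
\[
\int_a^b K_0(|x|/\sigma)\,\mathrm{d}x\leq\alpha\cosh^{-1}(2\sigma/\alpha)+2\sigma\sin^{-1}\!\big(\tfrac{\alpha}{2\sigma}\big)\leq\alpha\log(4\sigma/\alpha)+\tfrac{\pi}{2}\alpha,
\]
so after dividing by $\pi\sigma$ you arrive at $\mathbb{P}(a\leq Z\leq b)\leq\frac{\alpha}{\pi\sigma}\big[\log 2+\tfrac{\pi}{2}+\log(2\sigma/\alpha)\big]$. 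The absolute constant $\log 2+\pi/2\approx 2.26$ inside the bracket exceeds the $1$ in $C_{1,\sigma,\alpha}$, and I do not see how to drive it down to $1$ with this decomposition: $\cosh^{-1}(u)=\log(u+\sqrt{u^2-1})\sim\log(2u)$ is already essentially tight for large $u$, so the extra $\log 2$ in the low-$s$ piece is unavoidable, and the high-$s$ tail contributes another $\Theta(\alpha)$ on top. The paper's $r=1$ argument is much shorter and lands on the stated constant by design: it applies the pointwise estimate $K_0(y)<-2\log y$ for $0<y<0.729$ (Lemma \ref{beslem}(ii), proved by showing $-2\log y-K_0(y)$ is decreasing) and then integrates $-2\log y$ explicitly. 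Note this tacitly requires $(b-a)/(2\sigma)$ to be small, a restriction that is in any case necessary since $C_{1,\sigma,\alpha}$ is negative for $\alpha>2e\sigma$. You should replace your Laplace-representation argument for $r=1$ with the simpler pointwise bound on $K_0$.
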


\begin{proof}
Clearly,
\[\mathbb{P}(a\leq W\leq b)\leq \mathbb{P}(a\leq Z\leq b)+2d_{\mathrm{K}}(W,Z).\]
Since, for all $r>0$ and $\sigma>0$, the $ \mathrm{SVG}(r,\sigma,0)$ density $p(x)$ is an increasing function of $x$ for $x<0$ and a decreasing function of $x$ for $x>0$, we have that
\begin{equation}\label{nnmm}\mathbb{P}(a\leq Z\leq b)\leq \int_{-(b-a)/2}^{(b-a)/2} p(x)\,\mathrm{d}x=2\int_0^{(b-a)/2} p(x)\,\mathrm{d}x.
\end{equation}
To obtain (\ref{bbnn}), we bound the integral on the right-hand side of (\ref{nnmm}), treating the cases $r>1$, $r=1$ and $0<r<1$ separately.  For $r>1$ we bound the density $p(x)$ by $\frac{1}{2\sigma\sqrt{\pi}}\Gamma(\frac{r-1}{2})/\Gamma(\frac{r}{2})$ using (\ref{pmutend}) and then compute the trivial integral; for $r=1$ we use inequality (\ref{uu2}); and for $0<r<1$ we use inequality (\ref{uu3}).  This yields (\ref{bbnn}), as required. 
\end{proof}

The next lemma follows immediately from the estimates of Theorem \ref{thm1} and Corollary \ref{cor3.22}, and the subsequent lemma is straightforward and we hence omit the proof. 

\begin{lemma}\label{hae}For any $a\in\mathbb{R}$ and any $\epsilon>0$, let
\begin{equation}\label{hae5}h_{a,\epsilon}(x):=\epsilon^{-1}\int_0^\epsilon \mathbf{1}(x+s\leq a)\,\mathrm{d}s.
\end{equation}
Let $f_{a,\epsilon}$ be the solution of the $\mathrm{SVG}(r,\sigma,0)$ Stein equation with test function $h_{a,\epsilon}$.  Define $h_{a,0}(x)=\mathbf{1}(x\leq a)$ and $f_{a,0}$ accordingly. Then
\begin{eqnarray}\label{hae1}\|f_{a,\epsilon}\|&\leq&\frac{1}{\sigma}\bigg(\frac{1}{r}+\frac{\pi\Gamma(\frac{r}{2})}{2\Gamma(\frac{r+1}{2})}\bigg), \\
\label{hae2}\|xf_{a,\epsilon}(x)\|&\leq& \frac{3}{2}+\frac{1}{2r}, \\
\label{hae3}\|xf_{a,\epsilon}'(x)\|&\leq& \frac{1}{\sigma}\bigg(1+\frac{1}{2r}\bigg), \\
\label{hae4}\sigma^2\|T_rf_{a,\epsilon}'\|&\leq& \frac{5}{2}+\frac{1}{2r}.
\end{eqnarray}
\end{lemma}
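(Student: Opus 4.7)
The plan is to observe that the ``ramp'' test function $h_{a,\epsilon}$ is uniformly bounded by $1$, so that each of the four claimed inequalities reduces to a direct substitution of the estimate $\|\tilde{h}_{a,\epsilon}\|\le 1$ into a bound already established earlier in Section \ref{sec3}. Specifically, (\ref{hae1}) will come from (\ref{vgf0}), (\ref{hae2}) from (\ref{thmone}), (\ref{hae3}) from (\ref{thmtwo}), and (\ref{hae4}) from (\ref{medfs}) in Corollary \ref{cor3.22}. This is the meaning of the remark preceding the statement that the lemma ``follows immediately'' from Theorem \ref{thm1} and Corollary \ref{cor3.22}.

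First I would verify the pointwise bound $0\le h_{a,\epsilon}(x)\le 1$. Since the integrand $\mathbf{1}(x+s\le a)$ in the definition (\ref{hae5}) is $\{0,1\}$-valued, averaging over $s\in[0,\epsilon]$ produces a value in $[0,1]$; explicitly, $h_{a,\epsilon}(x)=1$ for $x\le a-\epsilon$, $h_{a,\epsilon}(x)=\epsilon^{-1}(a-x)$ for $a-\epsilon<x\le a$, and $h_{a,\epsilon}(x)=0$ for $x>a$. Since the centering constant $\mathbb{E}h_{a,\epsilon}(Z)$ (where $Z\sim\mathrm{SVG}(r,\sigma,0)$) therefore also lies in $[0,1]$, the centered test function satisfies $\|\tilde{h}_{a,\epsilon}\|\le 1$. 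This remains true in the degenerate case $\epsilon=0$, where $h_{a,0}=\mathbf{1}(\cdot\le a)$ is again $[0,1]$-valued.

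With this reduction in hand, I would substitute $\|\tilde{h}_{a,\epsilon}\|\le 1$ into each of (\ref{vgf0}), (\ref{thmone}), (\ref{thmtwo}), and (\ref{medfs}) to obtain (\ref{hae1})--(\ref{hae4}) respectively. For the $\epsilon=0$ case, the same four inequalities are exactly the entries of Corollary \ref{cor345}, so no additional argument is needed.

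There is essentially no obstacle here: the substantive work was carried out in Theorem \ref{thm1} and Corollary \ref{cor3.22}, whose bounds were deliberately stated with the correct dependence $\|\tilde{h}\|$ precisely so that they can be applied to smoothed indicators of the form $h_{a,\epsilon}$ in the subsequent Kolmogorov-distance arguments of Section \ref{sec4}. The only point requiring any care is to note that the cited bounds are for the $\mathrm{SVG}(r,\sigma,0)$ Stein equation, which matches the hypothesis of Lemma \ref{hae}.
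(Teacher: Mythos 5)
Your proof is correct and is exactly the argument the paper intends: the paper simply asserts that Lemma \ref{hae} ``follows immediately from the estimates of Theorem \ref{thm1} and Corollary \ref{cor3.22},'' and you have spelled this out by noting $0\le h_{a,\epsilon}\le 1$ (hence $\|\tilde h_{a,\epsilon}\|\le 1$) and substituting into (\ref{vgf0}), (\ref{thmone}), (\ref{thmtwo}), and (\ref{medfs}) respectively, which is precisely the same mechanism used in Corollary \ref{cor345}.
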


\begin{lemma}\label{lemsmo}Let $Z\sim \mathrm{SVG}(r,\sigma,0)$ and $W$ be a real-valued random variable.  Then, for any $\epsilon>0$,
\begin{equation*}d_{\mathrm{K}}(W,Z)\leq C_{r,\sigma,\epsilon}+\sup_{a\in\mathbb{R}}|\mathbb{E}h_{a,\epsilon}(W)-\mathbb{E}h_{a,\epsilon}(Z)|,
\end{equation*}
where $C_{r,\sigma,\epsilon}$ is defined as in Lemma \ref{lemcon} and $h_{a,\epsilon}$ is defined as in Lemma \ref{hae}.
\end{lemma}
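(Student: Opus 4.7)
The plan is to carry out the standard smoothing argument used in Stein's method for converting Wasserstein-type control into Kolmogorov control, using that $h_{a,\epsilon}$ sandwiches the indicator $\mathbf{1}(\cdot \leq a)$.

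First I would record the pointwise sandwich: since $h_{a,\epsilon}(x) = \epsilon^{-1}\int_0^\epsilon \mathbf{1}(x+s\leq a)\,\mathrm{d}s$ equals $1$ when $x \leq a-\epsilon$, equals $0$ when $x > a$, and lies in $[0,1]$ otherwise, we have
\[\mathbf{1}(x \leq a - \epsilon) \leq h_{a,\epsilon}(x) \leq \mathbf{1}(x \leq a), \quad x\in\mathbb{R}.\]
Equivalently, $h_{a,\epsilon}(x) \leq \mathbf{1}(x\leq a) \leq h_{a+\epsilon,\epsilon}(x)$.

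Next I would exploit this sandwich twice. For the upper bound on $\mathbb{P}(W\leq a)-\mathbb{P}(Z\leq a)$:
\[\mathbb{P}(W\leq a) \leq \mathbb{E}h_{a+\epsilon,\epsilon}(W) = \mathbb{E}h_{a+\epsilon,\epsilon}(Z) + \bigl[\mathbb{E}h_{a+\epsilon,\epsilon}(W) - \mathbb{E}h_{a+\epsilon,\epsilon}(Z)\bigr],\]
and $\mathbb{E}h_{a+\epsilon,\epsilon}(Z) \leq \mathbb{P}(Z\leq a+\epsilon) = \mathbb{P}(Z\leq a) + \mathbb{P}(a<Z\leq a+\epsilon)$. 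The key step is to bound the tail probability $\mathbb{P}(a<Z\leq a+\epsilon)\leq C_{r,\sigma,\epsilon}$ by invoking the intermediate density estimate that appears inside the proof of Lemma \ref{lemcon} (namely, the unimodality at $0$ of the SVG density together with the asymptotic formulas (\ref{pmutend}) and inequalities (\ref{uu2})/(\ref{uu3}), which exactly yield $\mathbb{P}(a\leq Z\leq b)\leq C_{r,\sigma,b-a}$). Symmetrically, using $h_{a,\epsilon}(x)\leq \mathbf{1}(x\leq a)$ and $\mathbb{E}h_{a,\epsilon}(Z) \geq \mathbb{P}(Z\leq a-\epsilon) \geq \mathbb{P}(Z\leq a) - C_{r,\sigma,\epsilon}$, I obtain the lower bound
\[\mathbb{P}(W\leq a) \geq \mathbb{P}(Z\leq a) - C_{r,\sigma,\epsilon} - \bigl|\mathbb{E}h_{a,\epsilon}(W) - \mathbb{E}h_{a,\epsilon}(Z)\bigr|.\]

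Combining the two inequalities gives
\[\bigl|\mathbb{P}(W\leq a) - \mathbb{P}(Z\leq a)\bigr| \leq C_{r,\sigma,\epsilon} + \sup_{b\in\mathbb{R}}\bigl|\mathbb{E}h_{b,\epsilon}(W) - \mathbb{E}h_{b,\epsilon}(Z)\bigr|,\]
and taking the supremum over $a\in\mathbb{R}$ yields the claimed bound on $d_{\mathrm{K}}(W,Z)$. There is no real obstacle here: the only non-cosmetic ingredient is the uniform bound on the SVG mass of an interval of length $\epsilon$, which is exactly the density-integration step already carried out in Lemma \ref{lemcon} and which produces the same constant $C_{r,\sigma,\epsilon}$ split into the three regimes $r>1$, $r=1$, $0<r<1$.
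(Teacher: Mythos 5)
Your proof is correct and is the standard smoothing argument that the author clearly intends (the paper states that this lemma is ``straightforward'' and omits the proof). The only ingredient worth flagging is the step $\mathbb{P}(a<Z\leq a+\epsilon)\leq C_{r,\sigma,\epsilon}$: rather than unpacking the intermediate estimate inside the proof of Lemma \ref{lemcon}, you could equivalently apply Lemma \ref{lemcon} itself with $W=Z$, since $d_{\mathrm{K}}(Z,Z)=0$ kills the extra term; either route gives the same bound.
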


\begin{theorem} \label{jazzz} Let $W$ be a mean zero random variable with variance $0<r\sigma^2<\infty$.  Suppose that $(W,W^{V_r})$ is given on a joint probability space so that $W^{V_r}$ has the $W$-centered equilibrium distribution of order $r$.  Then  
\begin{equation}\label{dfgh1}d_{\mathrm{K}}(W,Z)\leq \bigg(2+\frac{3}{r}+\frac{\pi\Gamma\big(\frac{r}{2}\big)}{\Gamma\big(\frac{r+1}{2}\big)}\bigg)\frac{\beta}{\sigma}+\frac{5}{2}C_{r,\sigma,4\beta}+\bigg(10+\frac{2}{r}\bigg)\mathbb{P}(|W-W^{V_r}|>\beta),
\end{equation}
where $C_{r,\sigma,4\beta}$ is defined as in Lemma \ref{lemcon}.  Also,
\begin{equation}\label{dk76}d_{\mathrm{K}}(W^{V_r},Z)\leq  \bigg(1+\frac{3}{2r}+\frac{\pi\Gamma\big(\frac{r}{2}\big)}{2\Gamma\big(\frac{r+1}{2}\big)}\bigg)\frac{\beta}{\sigma}+\bigg(3+\frac{1}{r}\bigg)\mathbb{P}(|W-W^{V_r}|>\beta).
\end{equation}
Suppose in addition that $\mathbb{E}|W|^3<\infty$.  Then 
\begin{eqnarray}\label{zezozr}d_{\mathrm{W}}(W,Z) &\leq&\frac{9}{4}\bigg(5+\frac{1}{r+1}\bigg)\mathbb{E}|W-W^{V_r}|, \\
\label{zezozr1}d_{\mathrm{W}}(W^{V_r},Z) &\leq&\frac{1}{4}\bigg(41+\frac{9}{r+1}\bigg)\mathbb{E}|W-W^{V_r}|, \\
\label{zezozr2} d_{\mathrm{K}}(W^{V_r},Z) &\leq&\frac{1}{\sigma}\bigg(1+\frac{3}{2r}+\frac{\pi\Gamma\big(\frac{r}{2}\big)}{2\Gamma\big(\frac{r+1}{2}\big)}\bigg)\mathbb{E}|W-W^{V_r}|.
\end{eqnarray}
\end{theorem}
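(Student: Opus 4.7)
The plan is to combine the Stein equation for the $\mathrm{SVG}(r,\sigma,0)$ distribution with the characterising equation (\ref{defeqn}) for $W^{V_r}$. Rewriting the Stein equation in operator form as $\sigma^2 T_rf'(x) = \tilde{h}(x) + xf(x)$, evaluating at $W$, taking expectations, and using Definition \ref{def123} to replace $\mathbb{E}Wf(W)$ by $\sigma^2\mathbb{E}T_rf'(W^{V_r})$, I obtain the master identity
\begin{equation*}
\mathbb{E}\tilde{h}(W) = \sigma^2\mathbb{E}\bigl[T_rf'(W) - T_rf'(W^{V_r})\bigr].
\end{equation*}
Evaluating the Stein equation at $W^{V_r}$ instead and applying the same substitution yields the dual identity $\mathbb{E}\tilde{h}(W^{V_r}) = \mathbb{E}[g(W) - g(W^{V_r})]$ with $g(x) := xf(x)$. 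Each of the five bounds arises by inserting a specific test function into one of these two identities and controlling the resulting expectation.

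The Wasserstein bound (\ref{zezozr}) is immediate from the first identity by $|\mathbb{E}[T_rf'(W) - T_rf'(W^{V_r})]| \leq \|(T_rf')'\|\,\mathbb{E}|W-W^{V_r}|$ together with inequality (\ref{lemse}) of Corollary \ref{cor3.22}. The finite-third-moment hypothesis enters only to guarantee $\mathbb{E}|W^{V_r}| < \infty$ through Proposition \ref{propsquare1}(iv). Bounds (\ref{zezozr1}) and (\ref{zezozr2}) follow from the dual identity via $|\mathbb{E}[g(W)-g(W^{V_r})]| \leq \|g'\|\,\mathbb{E}|W-W^{V_r}|$ and the product-rule estimate $\|g'\| \leq \|f\| + \|xf'(x)\|$: inequalities (\ref{thm1f0}) and (\ref{thmfour}) of Theorem \ref{thm1} give $\|g'\| \leq \tfrac{1}{4}(41+\tfrac{9}{r+1})\|h'\|$ for Lipschitz $h$, while Corollary \ref{cor345} supplies the analogous bound $\|g_z'\| \leq \tfrac{1}{\sigma}(1+\tfrac{3}{2r}+\tfrac{\pi\Gamma(r/2)}{2\Gamma((r+1)/2)})$ for $h_z = \mathbf{1}(\,\cdot\,\leq z)$. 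Bound (\ref{dk76}) dispenses with the third-moment assumption by replacing the Lipschitz step with a truncation at level $\beta$,
\begin{equation*}
\bigl|\mathbb{E}[g_z(W) - g_z(W^{V_r})]\bigr| \leq \|g_z'\|\,\beta + 2\|g_z\|\,\mathbb{P}(|W - W^{V_r}| > \beta),
\end{equation*}
where Corollary \ref{cor345} also supplies $\|g_z\| = \|xf_z\| \leq \tfrac{3}{2}+\tfrac{1}{2r}$.

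The main obstacle is (\ref{dfgh1}): since $W$ need not be absolutely continuous, one cannot evaluate the Stein equation at $W$ with the indicator test function and immediately recover $\mathbb{E}[g_z(W) - g_z(W^{V_r})]$. I plan to reduce the task to (\ref{dk76}) by using that $W^{V_r}$ \emph{is} absolutely continuous (Proposition \ref{propsquare1}(ii)) and is coupled close to $W$. For any $\beta > 0$ and $a \in \mathbb{R}$, the inclusion $\{W \leq a\} \subseteq \{W^{V_r} \leq a+\beta\} \cup \{|W-W^{V_r}| > \beta\}$, combined with $\mathbb{P}(W^{V_r} \leq a+\beta) \leq \mathbb{P}(Z \leq a+\beta) + d_{\mathrm{K}}(W^{V_r},Z)$ and the concentration estimate $\mathbb{P}(a < Z \leq a+\beta) \leq C_{r,\sigma,\beta}$ (the $W=Z$ case of Lemma \ref{lemcon}), yields
\begin{equation*}
\mathbb{P}(W \leq a) - \mathbb{P}(Z \leq a) \leq C_{r,\sigma,\beta} + d_{\mathrm{K}}(W^{V_r}, Z) + \mathbb{P}(|W - W^{V_r}| > \beta).
\end{equation*}
The analogous lower bound is handled by the symmetric inclusion, and substituting (\ref{dk76}) produces an inequality of the form of (\ref{dfgh1}). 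Alternatively the argument can be routed through the smoothing Lemma \ref{lemsmo} together with the estimates of Lemma \ref{hae}, choosing $\epsilon$ proportional to $\beta$ so that the $\epsilon^{-1}\beta$ term is absorbed into the $C_{r,\sigma,\cdot}$ contribution; either route is mechanical once the master and dual identities are in place, and the remaining work is careful bookkeeping of the numerical constants.
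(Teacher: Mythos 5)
Your framework — the ``master'' and ``dual'' identities $\mathbb{E}\tilde{h}(W)=\sigma^2\mathbb{E}[T_rf'(W)-T_rf'(W^{V_r})]$ and $\mathbb{E}\tilde{h}(W^{V_r})=\mathbb{E}[Wf(W)-W^{V_r}f(W^{V_r})]$ — is exactly what the paper uses, and your derivations of (\ref{zezozr}), (\ref{zezozr1}), (\ref{zezozr2}) and (\ref{dk76}), including the specific estimates from Theorem \ref{thm1} and Corollary \ref{cor345} and the truncation $\|g_z'\|\beta+2\|g_z\|\,\mathbb{P}(|\Delta|>\beta)$, match the paper's argument and constants line for line. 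The interesting divergence is in (\ref{dfgh1}). The paper works directly with the smoothed test function $h_{a,\epsilon}$, decomposes $\mathbb{E}[\sigma^2T_rf'(W)-Wf(W)]$ over $\{|\Delta|\le\beta\}$ and its complement, bounds the telescoping integral $\int_0^\Delta\sigma^2(T_rf')'(W+t)\,\mathrm{d}t$ using the $\epsilon^{-1}\mathbf{1}(a-\epsilon\le\cdot\le a)$ term and Lemma \ref{lemcon}, then invokes Lemma \ref{lemsmo} and picks $\epsilon=4\beta$ so the resulting inequality has $\tfrac{1}{2}\kappa$ on the right, and finally solves for $\kappa=d_{\mathrm K}(W,Z)$ (a self-improvement step). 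Your primary route instead couples $W$ to the absolutely continuous $W^{V_r}$ through the event inclusion $\{W\le a\}\subseteq\{W^{V_r}\le a+\beta\}\cup\{|\Delta|>\beta\}$ and the concentration estimate $\mathbb{P}(a<Z\le a+\beta)\le C_{r,\sigma,\beta}$ from Lemma \ref{lemcon}, giving $d_{\mathrm K}(W,Z)\le C_{r,\sigma,\beta}+d_{\mathrm K}(W^{V_r},Z)+\mathbb{P}(|\Delta|>\beta)$, and then inserts (\ref{dk76}). This is genuinely different: it avoids the smoothing lemma and the self-bounding step entirely, and when you carry out the bookkeeping it yields a bound of the form $C_{r,\sigma,\beta}+\big(1+\tfrac{3}{2r}+\tfrac{\pi\Gamma(r/2)}{2\Gamma((r+1)/2)}\big)\tfrac{\beta}{\sigma}+\big(4+\tfrac{1}{r}\big)\mathbb{P}(|\Delta|>\beta)$ — the same structure as (\ref{dfgh1}) but with strictly smaller constants (roughly a factor two in the $\beta/\sigma$ term, $C_{r,\sigma,\beta}$ in place of $\tfrac{5}{2}C_{r,\sigma,4\beta}$, and $4+\tfrac1r$ in place of $10+\tfrac2r$). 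So be aware that this route proves a \emph{stronger} statement that implies (\ref{dfgh1}), rather than reproducing its exact constants; the constants in the paper's display come from the smoothing route, which you correctly identify as the alternative.
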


\begin{proof}For this proof, we shall write $\kappa=d_{\mathrm{K}}(W,Z)$. Let $\Delta:= W -W^{V_r}$. Define $I_1 := \mathbf{1}(|\Delta| \leq \beta)$; note that $W^{V_r}$ may not have finite second moment.  Let $f$ be the solution of the $\mathrm{SVG}(r,\sigma,0)$ Stein equation with test function $h_{a,\epsilon}$, as defined in (\ref{hae5}).  Then $\mathbb{E}T_rf'(W^{V_r})$ is well defined, because $\|T_rf'\|<\infty$ (see Lemma \ref{hae}), and we have 
\begin{align*}\mathbb{E}[\sigma^2T_rf'(W)-Wf(W)]&=\sigma^2\mathbb{E}[I_1(T_rf'(W)-T_rf'(W^{V_r}))]\\
&\quad+\sigma^2\mathbb{E}[(1-I_1)(T_rf'(W)-T_rf'(W^{V_r}))]\\
&=:J_1+J_2.
\end{align*}
Using (\ref{hae4}) gives $|J_2|\leq2\times\big(\frac{5}{2}+\frac{1}{2r}\big)\mathbb{P}(|\Delta|>\beta)$.  Arguing as we did at the start of the proof of Corollary \ref{cor3.22} to obtain the second equality, and then using inequalities (\ref{hae1}) and (\ref{hae3}) and Lemma \ref{lemcon} in the last step gives
\begin{align*}J_1&=\sigma^2\mathbb{E}\bigg[I_1\int_0^\Delta (T_rf')'(W+t)\,\mathrm{d}t\bigg] \\
&=\mathbb{E}\bigg[I_1\int_0^\Delta \big\{(W+t)f'(W+t)+f(W+t)-\epsilon^{-1}\mathbf{1}(a-\epsilon\leq W+t\leq a)\big\}\,\mathrm{d}t\bigg] \\
&\leq \big(\|xf'(x)\|+\|f\|\big)\mathbb{E}|I_1\Delta|+\epsilon^{-1}\int_{-\beta}^0\mathbb{P}(a-\epsilon\leq W+t\leq a)\,\mathrm{d}t \\
&\leq \bigg(1+\frac{3}{2r}+\frac{\pi\Gamma\big(\frac{r}{2}\big)}{2\Gamma\big(\frac{r+1}{2}\big)}\bigg)\frac{\beta}{\sigma}+\beta\epsilon^{-1} C_{r,\sigma,\epsilon}+2\beta\epsilon^{-1}\kappa.
\end{align*}
Similarly,
\begin{align*}J_1&\geq -\big(\|xf'(x)\|+\|f\|\big)\mathbb{E}|I_1\Delta|-\epsilon^{-1}\int_{-\beta}^0\mathbb{P}(a-\epsilon\leq W+t\leq a)\,\mathrm{d}t \\
&\geq  -\bigg(1+\frac{3}{2r}+\frac{\pi\Gamma\big(\frac{r}{2}\big)}{2\Gamma\big(\frac{r+1}{2}\big)}\bigg)\frac{\beta}{\sigma}-\beta\epsilon^{-1} C_{r,\sigma,\epsilon}-2\beta\epsilon^{-1}\kappa,
\end{align*}
and so we conclude that
\begin{equation*}|J_1|\leq \bigg(1+\frac{3}{2r}+\frac{\pi\Gamma\big(\frac{r}{2}\big)}{2\Gamma\big(\frac{r+1}{2}\big)}\bigg)\frac{\beta}{\sigma}+\beta\epsilon^{-1} C_{r,\sigma,\epsilon}+2\beta\epsilon^{-1}\kappa.
\end{equation*}
Using Lemma \ref{lemsmo} and taking $\epsilon=4\beta$ now gives
\begin{align*}\kappa&\leq \bigg(5+\frac{1}{r}\bigg)\mathbb{P}(|\Delta|>\beta)+\bigg(1+\frac{3}{2r}+\frac{\pi\Gamma\big(\frac{r}{2}\big)}{2\Gamma\big(\frac{r+1}{2}\big)}\bigg)\frac{\beta}{\sigma}+(1+\beta\epsilon^{-1}) C_{r,\sigma,\epsilon}\\
&\quad+2\beta\epsilon^{-1}\kappa \\
&\leq \bigg(5+\frac{1}{r}\bigg)\mathbb{P}(|\Delta|>\beta)+\bigg(1+\frac{3}{2r}+\frac{\pi\Gamma\big(\frac{r}{2}\big)}{2\Gamma\big(\frac{r+1}{2}\big)}\bigg)\frac{\beta}{\sigma}+\frac{5}{4} C_{r,\sigma,4\beta}+\frac{1}{2}\kappa,
\end{align*}
whence on solving for $\kappa$ yields (\ref{dfgh1}).

Now let us prove (\ref{dk76}).  We can write
\begin{align*}&\mathbb{E}[\sigma^2T_rf'(W^{V_r})-W^{V_r}f(W^{V_r})]=\mathbb{E}[Wf(W)-W^{V_r}f(W^{V_r})] \\
&\quad=\mathbb{E}[I_1(Wf(W)-W^{V_r}f(W^{V_r}))]+\mathbb{E}[(1-I_1)(Wf(W)-W^{V_r}f(W^{V_r}))].
\end{align*} 
Taylor expanding, applying the triangle inequality to $\|xf'(x)+f(x)\|$, and using the estimates (\ref{hae1}), (\ref{hae2}) and (\ref{hae3}) then gives that
\begin{align*}&\mathbb{E}[\sigma^2T_rf'(W^{V_r})-W^{V_r}f(W^{V_r})] \\
&\quad\leq \|xf'(x)+f(x)\|\mathbb{E}|I_1\Delta|+2\|xf(x)\|\mathbb{P}(|\Delta|>\beta)  \\
&\quad\leq \frac{1}{\sigma}\bigg(1+\frac{3}{2r}+\frac{\pi\Gamma\big(\frac{r}{2}\big)}{2\Gamma\big(\frac{r+1}{2}\big)}\bigg)\beta+\bigg(3+\frac{1}{r}\bigg)\mathbb{P}(|\Delta|>\beta),
\end{align*}
which gives (\ref{dk76}).

Suppose now that $\mathbb{E}|W|^3<\infty$, which, by part (iv) of Proposition \ref{propsquare1}, ensures that $\mathbb{E}|W^{V_r}|<\infty$. Let $h\in\mathcal{H}_{\mathrm{W}}$.  Then
\begin{equation*}\mathbb{E}h(W)-\mathbb{E}h(Z)=\mathbb{E}[\sigma^2T_r f'(W)-Wf(W)]=\sigma^2\mathbb{E}[T_r f'(W)-T_r f'(W^{V_r})],
\end{equation*}
and by Taylor expansion, we have 
\begin{equation*}|\mathbb{E}h(W)-\mathbb{E}h(Z)|\leq\sigma^2\|(T_r f')'\|\mathbb{E}|W-W^{V_r}|.
\end{equation*}
On using the estimate (\ref{lemse}) we obtain (\ref{zezozr}), as required.  Also,
\begin{align}\big|\sigma^2\mathbb{E}\big[(T_rf')(W^{V_r})-W^{V_r}f(W^{V_r})\big]\big| &=\big|\mathbb{E}Wf(W)-\mathbb{E}W^{V_r}f(W^{V_r})\big|\nonumber \\
\label{alige}&\leq  \|xf'(x)+f(x)\|\mathbb{E}|W-W^{V_r}|.
\end{align}
Applying the estimates (\ref{thmfour}) and (\ref{thm1f0}) to (\ref{alige}) yields (\ref{zezozr1}), whilst applying the estimates (\ref{thmtwo}) and (\ref{vgf0}) yields (\ref{zezozr2}).
\end{proof}

\section{Applications}\label{sec5}

\subsection{Comparison of variance-gamma distributions}\label{sec5.1}

The following proposition quantifies the error in approximating a general VG distribution by a SVG distribution.  We refer the reader to \cite{ley} for a number of similar bounds for comparison of univariate distributions.  The proof provides an example under which the bounds on $\|(x-\mu)f^{(k)}(x)\|$, $k=0,1,2,3$, for the solution of the SVG Stein equation that were given in Theorem \ref{thm1} prove useful.  This application also serves as a simple example in which the inequalities of Proposition \ref{prop1} are suboptimal.  

\begin{proposition}\label{proppl}Let $X\sim \mathrm{VG}(r_1,\theta_1,\sigma_1,\mu_1)$ and $Y\sim\mathrm{SVG}(r_2,\sigma_2,\mu_2)$.  Then
\begin{align}d_{\mathrm{W}}(X,Y)&\leq \frac{9}{2}\bigg(1+\frac{1}{2(r_2+1)}\bigg)\frac{|\sigma_1^2-\sigma_2^2|}{\sigma_2}\nonumber \\
&\quad+\frac{9}{2\sigma_2}\bigg(\frac{1}{r_2+1}+\frac{\pi\Gamma\big(\frac{r_2+1}{2}\big)}{2\Gamma\big(\frac{r_2}{2}+1\big)}\bigg)\big(|\sigma_1^2r_1-\sigma_2^2r_2|+2|\theta_1(\mu_1-\mu_2)|\big)\nonumber \\
\label{dov}&\quad+\bigg(\frac{7}{2}+\frac{9\sigma_1^2}{\sigma_2^2(r_2+1)}\bigg)|\mu_1-\mu_2|+\bigg(\frac{7r_1}{2}+\frac{27}{2}+\frac{9}{2(r_2+1)}\bigg)|\theta_1|.
\end{align}
Suppose now that $\mu_1=\mu_2$.  Then
\begin{align}\label{dov1}d_{\mathrm{K}}(X,Y)\leq \frac{1}{2}\bigg(9+\frac{1}{r_2}\bigg)\bigg|1-\frac{\sigma_1^2}{\sigma_2^2}\bigg|+2\bigg|1-\frac{\sigma_1^2r_1}{\sigma_2^2r_2}\bigg| +\frac{|\theta_1|}{\sigma_2}\bigg(2+\frac{r_1+1}{r_2}+\frac{\pi r_1\Gamma\big(\frac{r_2}{2}\big)}{2\Gamma\big(\frac{r_2+1}{2}\big)}\bigg).
\end{align}
\end{proposition}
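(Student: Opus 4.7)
The plan is to use Stein's method by directly comparing the SVG Stein operator, applied to the solution of the SVG$(r_2,\sigma_2,\mu_2)$ Stein equation, against the VG$(r_1,\theta_1,\sigma_1,\mu_1)$ Stein identity satisfied by $X$. Let $f$ denote the solution of the $\mathrm{SVG}(r_2,\sigma_2,\mu_2)$ Stein equation with test function $h$. Then by definition
\[\mathbb{E}h(X)-\mathbb{E}h(Y)=\mathbb{E}\bigl[\sigma_2^2(X-\mu_2)f''(X)+\sigma_2^2 r_2 f'(X)-(X-\mu_2)f(X)\bigr],\]
and because $X\sim\mathrm{VG}(r_1,\theta_1,\sigma_1,\mu_1)$, the VG Stein identity from (\ref{377}) gives
\[\mathbb{E}\bigl[\sigma_1^2(X-\mu_1)f''(X)+(\sigma_1^2 r_1+2\theta_1(X-\mu_1))f'(X)+(r_1\theta_1-(X-\mu_1))f(X)\bigr]=0.\]
Subtracting the second identity from the first and regrouping using $\sigma_2^2(X-\mu_2)-\sigma_1^2(X-\mu_1)=(\sigma_2^2-\sigma_1^2)(X-\mu_2)+\sigma_1^2(\mu_1-\mu_2)$, and similarly collecting the $f'(X)$ and $f(X)$ coefficients, yields
\begin{align*}
\mathbb{E}h(X)-\mathbb{E}h(Y)
&=\mathbb{E}\bigl[\bigl((\sigma_2^2-\sigma_1^2)(X-\mu_2)+\sigma_1^2(\mu_1-\mu_2)\bigr)f''(X)\bigr]\\
&\quad+\mathbb{E}\bigl[\bigl((\sigma_2^2 r_2-\sigma_1^2 r_1)-2\theta_1(X-\mu_1)\bigr)f'(X)\bigr]\\
&\quad+\mathbb{E}\bigl[(\mu_2-\mu_1-r_1\theta_1)f(X)\bigr].
\end{align*}

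Next I would apply the triangle inequality term by term. The only coefficient that does not sit naturally at $\mu_2$ is $(X-\mu_1)f'(X)$, which I would split as $\|(x-\mu_1)f'(x)\|\leq\|(x-\mu_2)f'(x)\|+|\mu_1-\mu_2|\,\|f'\|$. For the Wasserstein bound (\ref{dov}), I would then insert the Lipschitz bounds (\ref{thm1f0})--(\ref{thm1f2}) and (\ref{thmfour})--(\ref{thmfive}) from Theorem \ref{thm1} to bound $\|f\|$, $\|f'\|$, $\|f''\|$, $\|(x-\mu_2)f'(x)\|$, and $\|(x-\mu_2)f''(x)\|$ in terms of $\|h'\|\leq 1$. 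Tracking the coefficient of $|\mu_1-\mu_2|$: it picks up a contribution $\sigma_1^2\|f''\|$ from the second-derivative term, a contribution $\frac{7}{2}\|h'\|$ from $\|f\|$, and a contribution $2|\theta_1|\|f'\|$ from the split of $\|(x-\mu_1)f'(x)\|$ (which becomes the $2|\theta_1(\mu_1-\mu_2)|$ summand in (\ref{dov})); the $r_1|\theta_1|$ term comes from the $r_1\theta_1 f(X)$ piece. Matching the remaining factors against (\ref{dov}) is then direct algebra.

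For the Kolmogorov bound (\ref{dov1}) under $\mu_1=\mu_2$, the $|\mu_1-\mu_2|$ terms vanish so there is no need to bound $\|f''\|$, and $\|(x-\mu_1)f'(x)\|=\|(x-\mu_2)f'(x)\|$. I would take $h=\mathbf{1}(\cdot\le z)$ and use the bounds of Corollary \ref{cor345}, which give control of $\|(x-\mu_2)f''(x)\|$, $\|f'\|$, $\|(x-\mu_2)f'(x)\|$ and $\|f\|$ with $\|\tilde{h}\|\leq 1$. Applied to the four surviving terms above, these produce the four summands $\frac12(9+\tfrac{1}{r_2})|1-\sigma_1^2/\sigma_2^2|$, $2|1-\sigma_1^2 r_1/(\sigma_2^2 r_2)|$, and (combining $2|\theta_1|\|(x-\mu_2)f'(x)\|+r_1|\theta_1|\|f\|$) the coefficient $\frac{|\theta_1|}{\sigma_2}\bigl(2+\tfrac{r_1+1}{r_2}+\tfrac{\pi r_1\Gamma(r_2/2)}{2\Gamma((r_2+1)/2)}\bigr)$, exactly (\ref{dov1}).

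There is no real conceptual obstacle here; the bounds of Theorem \ref{thm1} and Corollary \ref{cor345} are tailored to precisely this kind of comparison. The only step requiring care is the bookkeeping: one must keep the $(x-\mu_j)$ factors attached to the right $\mu_j$ so that the SVG bounds of Theorem \ref{thm1}, which are stated at $\mu_2$, can be applied, and the splitting of $\|(x-\mu_1)f'(x)\|$ is what produces the $|\theta_1(\mu_1-\mu_2)|$ cross term in (\ref{dov}) and requires the correct coefficient to appear alongside it.
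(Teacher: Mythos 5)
Your proof is correct and takes essentially the same route as the paper: express $\mathbb{E}h(X)-\mathbb{E}h(Y)$ as the difference of the $\mathrm{SVG}(r_2,\sigma_2,\mu_2)$ and $\mathrm{VG}(r_1,\theta_1,\sigma_1,\mu_1)$ Stein operators applied to $f$ at $X$, regroup the coefficients about $\mu_2$, and then apply the Lipschitz bounds of Theorem \ref{thm1} (for Wasserstein) and the bounds of Corollary \ref{cor345} (for Kolmogorov). The only cosmetic difference is that you carry the $(X-\mu_1)$ factor in the $2\theta_1 f'$ term into the bounding step and split it there, whereas the paper splits it at the algebra stage; the resulting bound is identical.
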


\begin{remark}\label{remopl}The function $h(x)=x$ is in the class $\mathcal{H}_{\mathrm{W}}$.  Therefore
\[d_{\mathrm{W}}(X,Y)\geq |\mathbb{E}X-\mathbb{E}Y|=|r_1\theta_1+\mu_1-\mu_2|.\]
When $\mu_1=\mu_2$, this lower bound is equal to $r_1|\theta_1|$, and so there exist constants $c>0$ and $C>0$ independent of $\theta_1$ such that $c|\theta_1|\leq d_{\mathrm{W}}(X,Y)\leq C|\theta_1|$, if in addition $r_1=r_2$ and $\sigma_1=\sigma_2$.  Comparing with the Kolmogorov bound (\ref{dov1}), we see that the inequalities of Proposition \ref{prop1} are suboptimal in this application. 
\end{remark}

\begin{proof}Let $\mathcal{A}_{r,\theta,\sigma,\mu}$ denote the differential operator on the left-hand side of the VG Stein equation (\ref{377}). Suppose that $h:\mathbb{R}\rightarrow\mathbb{R}$ is either bounded or Lipschitz.  Let $f$ be the solution of the $\mathrm{SVG}(r_2,\sigma_2,\mu_2)$ Stein equation.  Then 
\begin{align}\mathbb{E}h(X)-\mathbb{E}h(Y)&=\mathbb{E}[\mathcal{A}_{r_2,0,\sigma_2,\mu_2}f(X)]\nonumber \\
\label{wizlr}&=\mathbb{E}[\mathcal{A}_{r_2,0,\sigma_2,\mu_2}f(X)-\mathcal{A}_{r_1,\theta_1,\sigma_1,\mu_1}f(X)].
\end{align}
That $\mathbb{E}[\mathcal{A}_{r_1,\theta_1,\sigma_1,\mu_1}f(X)]=0$ follows from the assumptions on $h$, the estimates of Theorem \ref{thm1}, and Lemma 3.1 of \cite{gaunt vg}.  Firstly, we prove (\ref{dov}).  Suppose $h\in\mathcal{H}_{\mathrm{W}}$.  Then, from (\ref{wizlr}), 
\begin{align}&|\mathbb{E}h(X)-\mathbb{E}h(Y)|\nonumber\\
&=\big|\mathbb{E}\big[\sigma_1^2(X-\mu_1)f''(X)+(\sigma_1^2r_1+2\theta_1(X-\mu_1)f'(X)+(r_1\theta_1-(X-\mu_1))f(X)\nonumber \\
&\quad-\sigma_2^2(X-\mu_2)f''(X) -\sigma_2^2r_2 f'(X)+(X-\mu_2)f(X)]\big|\nonumber \\
&=\big|\mathbb{E}\big[(\sigma_1^2-\sigma_2^2)(X-\mu_2)f''(X)+\sigma_1^2(\mu_2-\mu_1)f''(X) +(\sigma_1^2r_1-\sigma_2^2r_2)f'(X)\nonumber \\
&\quad+2\theta_1(X-\mu_2)f'(X)+2\theta_1(\mu_2-\mu_1)f'(X)+r_1 \theta_1 f(X)+(\mu_1-\mu_2)f(X)\big]\big|\nonumber\\
&\leq |\sigma_1^2-\sigma_2^2|\|(x-\mu_2)f''(x)\|+\sigma_1^2|\mu_1-\mu_2|\|f''\| +\big(|\sigma_1^2r_1-\sigma_2^2 r_2|+2|\theta_1(\mu_1-\mu_2)|\big)\|f'\|\nonumber\\
\label{near3}&\quad+2|\theta_1|\|(x-\mu_2)f'(x)\|+(r_1|\theta_1|+|\mu_1-\mu_2|)\|f\|.
\end{align}
Using the estimates of Theorem \ref{thm1} (with $\|h'\|\leq1$) to bound (\ref{near3}) yields (\ref{dov}).

Now suppose that $\mu_1=\mu_2$. Take $h_z(x)=\mathbf{1}(x\leq z)$.  On using the estimates of Corollary \ref{cor345} to bound (\ref{near3}), we obtain (\ref{dov1}), as required.
\end{proof}

\subsection{Malliavin-Stein method for symmetric variance-gamma approximation}\label{sec5.2}

In recent years, one of the most significant applications of Stein's method has been to Gaussian analysis on Wiener space.  This body of research was initiated by \cite{np09}, in which Stein's method and Malliavin calculus are combined to derive a quantitative ``fourth moment" theorem for the normal approximation of a sequence of random variables living in a fixed Wiener chaos.

In a recent work \cite{eichelsbacher}, the Malliavin-Stein method was extended to the VG distribution. Here, we obtain explicit constants in some of the main results (in the SVG case) of \cite{eichelsbacher}, these being six moment theorems for the SVG approximation of double Wiener-It\^{o} integrals. Our results also fix a technical issue in that the Wasserstein distance bounds stated in \cite{eichelsbacher} had only been proven in the weaker bounded Wasserstein distance (at the time of \cite{eichelsbacher} the bounds for the solution of the Stein equation in the literature \cite{gaunt thesis, gaunt vg} had a dependence on the test function $h$ such that this was the best that could be achieved).

Let us first introduce some notation; see the book \cite{np12} for a more detailed discussion.  Let $\mathbb{D}^{p,q}$ be the Banach space of all functions in $L^q(\gamma)$, where $\gamma$ is the standard Gaussian measure, whose Malliavin derivatives up to order $p$ also belong to $L^q(\gamma)$.  Let $\mathbb{D}^\infty$ be the class of infinitely many times Malliavin differentiable random variables.  We introduce the so-called $\Gamma$-operators $\Gamma_j$ \cite{np10}.  For a random variable $F\in\mathbb{D}^\infty$, we define $\Gamma_1(F)=F$ and, for every $j\geq2$,
\[\Gamma_j(F)=\langle DF, -DL^{-1}\Gamma_{j-1}(F)\rangle_{\mathfrak{H}}.\]
Here $D$ is the Malliavin derivative, $L^{-1}$ is the pseudo-inverse of the infinitesimal generator of the Ornstein-Uhlenbeck semi-group, and $\mathfrak{H}$ is a real separable Hilbert space.  Finally, for $f\in \mathfrak{H}^{\odot 2}$, we write $I_2(f)$ for the double Wiener-It\^{o} integral of $f$.



\begin{theorem}Let $F\in\mathbb{D}^{2,4}$ be such that $\mathbb{E}F=0$ and let $Z\sim \mathrm{SVG}(r,\sigma,0)$.  Then
\begin{align}\label{etqu}d_{\mathrm{W}}(F,Z)&\leq\frac{9}{\sigma^2(r+1)}\mathbb{E}|\sigma^2 F-\Gamma_3(F)| +\frac{9}{2\sigma}\bigg(\frac{1}{r+1}+\frac{\pi\Gamma\big(\frac{r+1}{2}\big)}{2\Gamma\big(\frac{r}{2}+1\big)}\bigg)|r\sigma^2-\mathbb{E}[\Gamma_2(F)]|.
\end{align}
If in addition $F\in\mathbb{D}^{3,8}$, then $\Gamma_3(F)$ is square-integrable and
\begin{equation}\label{etpr}\mathbb{E}|\sigma^2 F-\Gamma_3(F)|\leq\big(\mathbb{E}[(\sigma^2 F-\Gamma_3(F))^2]\big)^{\frac{1}{2}}.
\end{equation}
\end{theorem}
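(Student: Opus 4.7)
The plan is to combine the $\mathrm{SVG}(r,\sigma,0)$ Stein equation with two successive Malliavin integrations by parts, in the spirit of the Malliavin--Stein method, and then invoke the sup-norm bounds on $f'$ and $f''$ supplied by Theorem \ref{thm1}. Writing the Stein equation compactly as $\sigma^2(xf''(x)+rf'(x))-xf(x)=\tilde{h}(x)$ and evaluating at $F$ gives
\begin{equation*}
\mathbb{E}h(F)-\mathbb{E}h(Z)=\mathbb{E}\bigl[\sigma^2 Ff''(F)+\sigma^2 rf'(F)-Ff(F)\bigr],
\end{equation*}
so the whole task reduces to rewriting $\mathbb{E}[Ff(F)]$ in terms of $\Gamma_2(F)$ and $\Gamma_3(F)$.

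First I would apply integration by parts to $\mathbb{E}[Ff(F)]$. Since $\mathbb{E}F=0$, we have $F=-LL^{-1}F$, and the duality between $D$ and $-L$ yields $\mathbb{E}[Ff(F)]=\mathbb{E}\bigl[\langle Df(F),-DL^{-1}F\rangle_{\mathfrak{H}}\bigr]=\mathbb{E}[f'(F)\Gamma_2(F)]$. I would then iterate the same idea with the centered random variable $\Gamma_2(F)-\mathbb{E}\Gamma_2(F)$: using $\Gamma_2(F)-\mathbb{E}\Gamma_2(F)=-LL^{-1}(\Gamma_2(F)-\mathbb{E}\Gamma_2(F))$ and unwrapping $\langle DF,-DL^{-1}\Gamma_2(F)\rangle_{\mathfrak{H}}=\Gamma_3(F)$ produces
\begin{equation*}
\mathbb{E}\bigl[f'(F)\bigl(\Gamma_2(F)-\mathbb{E}\Gamma_2(F)\bigr)\bigr]=\mathbb{E}[f''(F)\Gamma_3(F)],
\end{equation*}
and hence $\mathbb{E}[Ff(F)]=\mathbb{E}[f''(F)\Gamma_3(F)]+\mathbb{E}\Gamma_2(F)\cdot\mathbb{E}f'(F)$.

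Substituting back into the Stein-equation expression and rearranging yields the key identity
\begin{equation*}
\mathbb{E}h(F)-\mathbb{E}h(Z)=\mathbb{E}\bigl[(\sigma^2 F-\Gamma_3(F))f''(F)\bigr]+\bigl(r\sigma^2-\mathbb{E}\Gamma_2(F)\bigr)\mathbb{E}f'(F).
\end{equation*}
The triangle inequality, followed by the bounds $\|f''\|\leq\frac{9}{\sigma^2(r+1)}\|h'\|$ from (\ref{thm1f2}) and $\|f'\|\leq\frac{9}{2\sigma}\bigl(\frac{1}{r+1}+\frac{\pi\Gamma((r+1)/2)}{2\Gamma(r/2+1)}\bigr)\|h'\|$ from (\ref{thm1f1}), together with $\|h'\|\leq 1$ for $h\in\mathcal{H}_{\mathrm{W}}$, delivers (\ref{etqu}) upon taking the supremum over such $h$. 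Inequality (\ref{etpr}) is then immediate from the Cauchy--Schwarz inequality $\mathbb{E}|X|\leq(\mathbb{E}X^2)^{1/2}$.

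The main obstacle is the rigorous justification of the two integration-by-parts steps under the regularity hypothesis $F\in\mathbb{D}^{2,4}$. One must verify that $\Gamma_2(F)\in\mathbb{D}^{1,2}$, so that $\Gamma_3(F)$ is well-defined and integrable, and that $f'$ and $f''$ (which Theorem \ref{thm1} only guarantees to be bounded, not smooth) can legitimately be fed into the duality relations. The standard remedy is a mollification argument: approximate the Lipschitz $h$ by smooth test functions, carry out the calculation for the corresponding smooth solutions, and pass to the limit using the uniform sup-norm bounds of Theorem \ref{thm1} together with standard Meyer-type $L^p$ estimates on $\Gamma_j(F)$ (see \cite{np10,eichelsbacher}); square-integrability of $\Gamma_3(F)$ under the stronger assumption $F\in\mathbb{D}^{3,8}$, required for (\ref{etpr}), follows from the same body of estimates.
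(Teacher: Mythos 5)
Your proposal is correct and follows essentially the same route as the paper: the paper also reduces to the identity $\mathbb{E}h(F)-\mathbb{E}h(Z)=\mathbb{E}[f''(F)(\sigma^2 F-\Gamma_3(F))]+(r\sigma^2-\mathbb{E}[\Gamma_2(F)])\mathbb{E}f'(F)$, then applies the sup-norm bounds (\ref{thm1f1}) and (\ref{thm1f2}) from Theorem \ref{thm1}, and notes (\ref{etpr}) is Cauchy--Schwarz. The only difference is that the paper cites the proof of Theorem 4.1 of \cite{eichelsbacher} for the identity and its regularity prerequisites (including the square-integrability of $\Gamma_3(F)$ when $F\in\mathbb{D}^{3,8}$), whereas you re-derive the two Malliavin integration-by-parts steps explicitly; that derivation is correct.
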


\begin{proof}Let $f:\mathbb{R}\rightarrow\mathbb{R}$ be twice differentiable with bounded first and second derivative.  Then it was shown in the proof of Theorem 4.1 of \cite{eichelsbacher} that
\begin{align}&\big|\mathbb{E}\big[\sigma^2Ff''(F)+\sigma^2 rf'(F)-Ff(F)\big]\big|\nonumber\\
\label{polk}&\quad=\big|\mathbb{E}\big[f''(F)(\sigma^2 F-\Gamma_3(F))+f'(F)(r\sigma^2-\mathbb{E}[\Gamma_2(F)])\big]\big|  \\
&\quad\leq \|f''\|\mathbb{E}|\sigma^2 F-\Gamma_3(F)|+\|f'\|\mathbb{E}|r\sigma^2-\mathbb{E}[\Gamma_2(F)]|\nonumber.
\end{align} 
If $h\in\mathcal{H}_{\mathrm{W}}$, then the solution $f$ of the $\mathrm{SVG}(r,\sigma,0)$ Stein equation is twice differentiable with bounded first and second derivatives.  Using the estimates (\ref{thm1f2}) and (\ref{thm1f1}) of Theorem \ref{thm1} to bound $\|f''\|$ and $\|f'\|$ then yields (\ref{etqu}).  Inequality (\ref{etpr}) is justified in \cite{eichelsbacher}.
\end{proof}

\begin{corollary}\label{cor5.4}Let $F_n=I_2(f_n)$ with $f_n\in \mathfrak{H}^{\odot 2}$, $n\geq1$.  Also, let $Z\sim\mathrm{SVG}(r,\sigma,0)$ and assume that $\mathbb{E}[F_n^2]=r\sigma^2$.  Then
\begin{equation}\label{doww}d_{\mathrm{W}}(F_n,Z)\leq \frac{9}{\sigma^2(r+1)}\bigg(\!\frac{1}{120}\kappa_6(F_n)-\frac{\sigma^2}{3}\kappa_4(F_n)+\frac{1}{4}(\kappa_3(F_n))^2+\sigma^4\kappa_2(F_n)\!\bigg)^{\frac{1}{2}}.
\end{equation}
\end{corollary}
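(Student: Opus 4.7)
The plan is to specialise the bound (\ref{etqu}) from the preceding theorem to the second-Wiener-chaos setting and re-express every quantity on the right-hand side in terms of cumulants of $F_n$.

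First I would dispose of the second term in (\ref{etqu}). Since $F_n$ lies in the second Wiener chaos we have $\mathbb{E}F_n = 0$, and the standard Malliavin integration by parts gives $\mathbb{E}[\Gamma_2(F_n)] = \mathbb{E}[F_n^2]$; combined with the hypothesis $\mathbb{E}[F_n^2] = r\sigma^2$ this forces $|r\sigma^2 - \mathbb{E}[\Gamma_2(F_n)]| = 0$, so the whole second summand vanishes. Inequality (\ref{etpr}) then reduces the remaining task to estimating $\mathbb{E}[(\sigma^2 F_n - \Gamma_3(F_n))^2]$, which expands as
\begin{equation*}
\sigma^4\mathbb{E}[F_n^2] - 2\sigma^2\mathbb{E}[F_n\Gamma_3(F_n)] + \mathbb{E}[\Gamma_3(F_n)^2].
\end{equation*}

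Next I would identify each of these three expectations with a combination of cumulants. The first is trivially $\sigma^4\kappa_2(F_n)$. For the cross term, the general identity $\mathbb{E}[F\Gamma_j(F)] = \mathbb{E}[\Gamma_{j+1}(F)]$ for centred $F\in\mathbb{D}^\infty$ (a one-line consequence of the duality $\mathbb{E}[F\delta u] = \mathbb{E}[\langle DF,u\rangle_{\mathfrak{H}}]$ applied with $u = -DL^{-1}\Gamma_j(F)$), combined with the cumulant-$\Gamma$ formula $\kappa_j(F) = (j-1)!\,\mathbb{E}[\Gamma_j(F)]$, gives $\mathbb{E}[F_n \Gamma_3(F_n)] = \mathbb{E}[\Gamma_4(F_n)] = \frac{1}{6}\kappa_4(F_n)$. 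For the third term I would invoke the second-chaos identity
\begin{equation*}
\mathbb{E}[\Gamma_3(F_n)^2] = \frac{1}{120}\kappa_6(F_n) + \frac{1}{4}(\kappa_3(F_n))^2,
\end{equation*}
which is available (explicitly or implicitly) in the six-moment machinery of \cite{eichelsbacher} via a product-formula/contraction computation for double Wiener-It\^{o} integrals.

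Substituting the three expressions into the expansion produces exactly the quantity under the square root in (\ref{doww}), and multiplying through by the prefactor $9/(\sigma^2(r+1))$ inherited from (\ref{etqu}) finishes the proof. The main obstacle is the identity for $\mathbb{E}[\Gamma_3(F_n)^2]$: unlike the cross term it is a genuine second-chaos calculation, and the appearance of the $(\kappa_3(F_n))^2$ correction is precisely what distinguishes the SVG limit from a normal or gamma limit. As a sanity check, plugging the SVG cumulants $\kappa_2 = r\sigma^2$, $\kappa_3 = 0$, $\kappa_4 = 6r\sigma^4$, $\kappa_6 = 120 r\sigma^6$ from Section \ref{sec2} into the bracket collapses it to zero, confirming consistency of the bound when $F_n$ is already $\mathrm{SVG}(r,\sigma,0)$-distributed.
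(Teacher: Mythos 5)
Your argument is correct and follows essentially the same route as the paper: starting from inequality (\ref{etqu}), observing that $\mathbb{E}[\Gamma_2(F_n)]=\kappa_2(F_n)=\mathbb{E}[F_n^2]=r\sigma^2$ kills the second summand, and then expressing $\mathbb{E}[(\sigma^2 F_n-\Gamma_3(F_n))^2]$ in terms of cumulants. The only difference is cosmetic --- the paper cites the cumulant expansion of $\mathbb{E}[(\sigma^2 F_n-\Gamma_3(F_n))^2]$ wholesale from the proof of Theorem 5.8 of \cite{eichelsbacher}, whereas you expand the square and reassemble the three pieces (the $\kappa_4$ cross term via $\mathbb{E}[F\Gamma_j(F)]=\mathbb{E}[\Gamma_{j+1}(F)]$ and $\kappa_j(F)=(j-1)!\,\mathbb{E}[\Gamma_j(F)]$, and the $\mathbb{E}[\Gamma_3(F_n)^2]$ term from the same source); both versions ultimately lean on the same second-chaos computation in \cite{eichelsbacher}.
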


\begin{proof}It is a standard result that $\mathbb{E}[\Gamma_2(F_n)]=\kappa_2(F_n)$ (see Lemma 4.2 and Theorem 4.3 of \cite{np10}), and it was shown in the proof of Theorem 5.8 of \cite{eichelsbacher} that
\begin{equation*}\mathbb{E}[(\sigma^2 F_n-\Gamma_3(F_n))^2]=\frac{1}{120}\kappa_6(F_n)-\frac{\sigma^2}{3}\kappa_4(F_n)+\frac{1}{4}(\kappa_3(F_n))^2+\sigma^4\kappa_2(F_n).
\end{equation*}
Inserting these formulas into (\ref{etqu}) yields (\ref{doww}), as required.
\end{proof}

\begin{remark}One can obtain Kolmogorov distance bounds by applying Proposition \ref{prop1} to the bound (\ref{doww}). However, these bounds are unlikely to be of optimal order.  Unlike for normal approximation, for which an optimal rate of convergence in Kolmogorov distance has been obtained \cite{np15}, there is a technical difficulty for SVG approximation because the first derivative of the solution $f_z$ of the $\mathrm{SVG}(r,\sigma,0)$ Stein equation with test function $h_z(x)=\mathbf{1}(x\leq z)$ has a discontinuity at the origin when $z=0$ (see Proposition \ref{disclem}).  We can, however, bound the expression using the inequalities (\ref{thmthree}) for $\|xf''(x)\|$ and (\ref{vgf1}) for $\|f'\|$ to obtain the bound
\begin{align*}d_{\mathrm{K}}(F,Z)&\leq \frac{1}{2\sigma^2}\bigg(9+\frac{1}{r}\bigg)\mathbb{E}\bigg|\sigma^2 -\frac{\Gamma_3(F)}{F}\bigg| +\frac{2}{\sigma^2 r}|r\sigma^2-\mathbb{E}[\Gamma_2(F)]| \\
&\leq \frac{1}{2\sigma^2}\bigg(9+\frac{1}{r}\bigg)\bigg\{\mathbb{E}\bigg[\bigg(\sigma^2 -\frac{\Gamma_3(F)}{F}\bigg)^2\bigg]\bigg\}^{\frac{1}{2}} +\frac{2}{\sigma^2r}|r\sigma^2-\mathbb{E}[\Gamma_2(F)]|,
\end{align*} 
provided the expectations exist.  However, there are no formulas in the literature for the expectations $\mathbb{E}[\Gamma_3(F)/F]$ and $\mathbb{E}[(\Gamma_3(F))^2/F^2]$ (when they exist), and it is unlikely they could be expressed solely in terms of lower order cumulants of $F$.
\end{remark}

\subsection{Binary sequence comparison}\label{sec5.3}


Here we consider an application of Theorem \ref{jazzz} to binary sequence comparison.  This a special case of a more general problem of word sequence comparison, which is of importance to biological sequence comparison.  One way of comparing sequences uses $k$-tuples (a sequence of letters of length $k$).  If two sequences are closely related, we would expect their $k$-tuple content to be similar. A statistic for sequence comparison based on $k$-tuple content, known as the $D_2$ statistic, was suggested by \cite{bla} (see \cite{waterman} for further statistics based on $k$-tuple content).  Letting $\mathcal{A}$ denote an alphabet of size $d$, and $X_{\mathbf{w}}$ and $Y_{\mathbf{w}}$ the number of occurrences of the word $\mathbf{w}\in\mathcal{A}^k$ in the first and second sequences, respectively, then the $D_2$ statistic is defined by
\[D_2=\sum_{\mathbf{w}\in\mathcal{A}^k}X_{\mathbf{w}}Y_{\mathbf{w}}.\] 

Due to the complicated dependence structure (for a detailed account see \cite{lothaire}) approximating the asymptotic distribution of $D_2$ is a difficult problem.  However, for certain parameter regimes $D_2$ has been shown to be asymptotically normal and Poisson \cite{lippert}.  

We now consider the case of an alphabet of size $2$ with comparison based on the content of $1$-tuples.  We suppose that the sequences are of length $m$ and $n$, the alphabet is $\{0,1\}$, and $\mathbb{P}(0 \mbox{ appears})=\mathbb{P}(1 \mbox{ appears})=\frac{1}{2}$.  Denoting the number of occurrences of $0$ in the two sequences by $X$ and $Y$, then
\[D_2=XY+(m-X)(n-Y).\]
Clearly, $X$ and $Y$ are independent binomial variables with expectations $\frac{m}{2}$ and $\frac{n}{2}$.  Straightforward calculations (see \cite{lippert}) show that $\mathbb{E}D_2=\frac{mn}{2}$ and $\mathrm{Var}(D_2)=\frac{mn}{4}$ and the standardised $D_2$ statistic can be written as
\begin{equation} \label{cox apple} W=\frac{D_2-\mathbb{E}D_2}{\sqrt{\mathrm{Var}(D_2)}} =\bigg(\frac{X-\frac{m}{2}}{\sqrt{\frac{m}{4}}}\bigg)\bigg(\frac{Y-\frac{n}{2}}{\sqrt{\frac{n}{4}}}\bigg). 
\end{equation}
By the central limit theorem, $(X-\frac{m}{2})/\sqrt{\frac{m}{4}}$ and $(Y-\frac{n}{2})/\sqrt{\frac{n}{4}}$ are approximately $N(0,1)$ distributed, and so $W$ has an approximate $\mathrm{SVG}(1,1,0)$ distribution.  In \cite{gaunt vg}, a $O(m^{-1}+n^{-1})$ bound for the rate of convergence was given in a smooth test function metric (which requires the test function to be three times differentiable).  In Theorem \ref{d2thm} below we use Theorem \ref{jazzz} to obtain bounds in the more usual Wasserstein and Kolmogorov metrics.  Our rate of convergence is slower, but we do quantify the approximation in stronger metrics.  We will first need to prove the following theorem. 

\begin{theorem}\label{jazzz4}Suppose $X_1,\ldots,X_m$ are i.i.d$.$ and $Y_1,\ldots,Y_n$ are i.i.d$.$, with $\mathbb{E}X_1=\mathbb{E}Y_1=0$, $\mathbb{E}X_1^2=\mathbb{E}Y_1^2$ and $\mathbb{E}|X_1|^3<\infty$ and $\mathbb{E}|Y_1|^3<\infty$.  Let $W_1=\frac{1}{\sqrt{m}}\sum_{i=1}^m X_i$ and $W_2=\frac{1}{\sqrt{n}}\sum_{i=1}^n Y_i$ and set $W=W_1 W_2$.  Let $Z\sim\mathrm{SVG}(1,1,0)$.  Then
\begin{equation}\label{jazzwas}d_{\mathrm{W}}(W,Z)\leq 20.11\bigg(\frac{1}{\sqrt{m}}+\frac{1}{\sqrt{n}}\bigg)\mathbb{E}|X_1|^3\mathbb{E}|Y_1|^3.
\end{equation}
If in addition $\mathbb{E}X_1^3=\mathbb{E}Y_1^3=0$ and $\mathbb{E}X_1^4<\infty$ and $\mathbb{E}Y_1^4<\infty$, then
\begin{align}\label{jazzkol}d_{\mathrm{K}}(W,Z)&\leq \bigg\{44.33+2.02\bigg[\log\bigg(\frac{1}{\mathbb{E}X_1^4\mathbb{E}Y_1^4}\bigg)+\log\bigg(\frac{mn}{m+n}\bigg)\bigg]\bigg\}\bigg(\frac{1}{m}+\frac{1}{n}\bigg)^{\frac{1}{3}}\big(\mathbb{E}X_1^4\mathbb{E}Y_1^4\big)^{\frac{1}{3}}.
\end{align}
\end{theorem}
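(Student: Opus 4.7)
The plan is to apply Theorem~\ref{jazzz} with $r=1$, $\sigma=1$ and thereby reduce everything to producing a coupling of $(W,W^{V_1})$ whose discrepancy can be controlled by the moments of $X_1$ and $Y_1$. The key first step is to identify $W^{V_1}$ explicitly: I claim that for independent mean-zero unit-variance $W_1,W_2$, the product $W=W_1W_2$ satisfies $W^{V_1}=_d W_1^* W_2^*$, where $W_i^*$ denote independent random variables with the zero-bias distributions of $W_1$ and $W_2$ (using the identification $W^{V_1}=W^{*(2)}$ noted after Proposition~\ref{propsquare}). To verify the characterising identity (\ref{defeqn}), I would condition on $W_2$ and apply the zero-bias identity for $W_1$ to get $\mathbb{E}Wf(W)=\mathbb{E}[W_2^2 f'(W_1^*W_2)]$, and then apply the identity $\mathbb{E}W_2^2 g(W_2)=\mathbb{E}T_1 g(W_2^*)$ (immediate from the zero-bias identity for $W_2$ with $h(w)=wg(w)$) to rewrite this as $\mathbb{E}T_1 f'(W_1^*W_2^*)$.

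For the coupling I would use the Goldstein--Reinert zero-bias construction on sums: pick $I$ uniform on $\{1,\ldots,m\}$ and $X_I^*$ independently with the zero-bias distribution of $X_1$ (all independent of everything else), and set $W_1^*=W_1-X_I/\sqrt{m}+X_I^*/\sqrt{m}$; then $W_1^*$ has the zero-bias distribution of $W_1$. Define $W_2^*$ analogously with independent $J,Y_J^*$ and put $W^{V_1}:=W_1^*W_2^*$. The telescoping identity
\begin{equation*}
W-W^{V_1}=(W_1-W_1^*)W_2+W_1^*(W_2-W_2^*)
\end{equation*}
then drives both bounds, since the two summands involve disjoint collections of independent variables.

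The Wasserstein bound (\ref{jazzwas}) follows from (\ref{zezozr}), which for $r=\sigma=1$ gives $d_{\mathrm{W}}(W,Z)\le\tfrac{99}{8}\mathbb{E}|W-W^{V_1}|$. Taking absolute values above and factoring expectations by independence, I would bound each piece via $\mathbb{E}|W_1-W_1^*|\le m^{-1/2}(\mathbb{E}|X_1|+\mathbb{E}|X_1^*|)$ with $\mathbb{E}|X_1^*|=\tfrac{1}{2}\mathbb{E}|X_1|^3$ (from the zero-bias identity applied to $f(x)=x|x|$) and the Lyapunov estimate $\mathbb{E}|X_1|\le 1\le\mathbb{E}|X_1|^3$, giving $\mathbb{E}|W_1-W_1^*|\le \tfrac{3}{2\sqrt{m}}\mathbb{E}|X_1|^3$, while $\mathbb{E}|W_2|\le 1$ and $\mathbb{E}|W_1^*|\le 1+\mathbb{E}|W_1-W_1^*|$ are handled by the triangle inequality. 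Combining these (and, to symmetrise, averaging with the analogous decomposition obtained by swapping the roles of the two factors) yields $\mathbb{E}|W-W^{V_1}|\le C(m^{-1/2}+n^{-1/2})\mathbb{E}|X_1|^3\mathbb{E}|Y_1|^3$; careful tracking of constants produces the stated $20.11$.

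For the Kolmogorov bound (\ref{jazzkol}) I would start from (\ref{dfgh1}), which for $r=\sigma=1$ reads
\begin{equation*}
d_{\mathrm{K}}(W,Z)\le (5+\pi^{3/2})\beta+\tfrac{5}{2}C_{1,1,4\beta}+12\,\mathbb{P}(|W-W^{V_1}|>\beta),
\end{equation*}
and estimate the tail by Markov's inequality applied to $\mathbb{E}(W-W^{V_1})^2$, since the additional hypotheses $\mathbb{E}X_1^3=\mathbb{E}Y_1^3=0$ and finiteness of fourth moments give $\mathbb{E}X_1^*=\mathbb{E}Y_1^*=0$ together with the explicit formulas $\mathbb{E}(W_1-W_1^*)^2=m^{-1}(1+\tfrac{1}{3}\mathbb{E}X_1^4)$ and $\mathbb{E}(W_1^*)^2=\tfrac{1}{3}\mathbb{E}W_1^4=1+(3m)^{-1}(\mathbb{E}X_1^4-3)$. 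Squaring the decomposition of $W-W^{V_1}$ and using independence then yields $\mathbb{E}(W-W^{V_1})^2\le C(1/m+1/n)\mathbb{E}X_1^4\mathbb{E}Y_1^4$. Optimising at $\beta\asymp(\mathbb{E}(W-W^{V_1})^2)^{1/3}$ gives the rate $(1/m+1/n)^{1/3}(\mathbb{E}X_1^4\mathbb{E}Y_1^4)^{1/3}$, with the logarithmic prefactor $\log(mn/(m+n))+\log(1/(\mathbb{E}X_1^4\mathbb{E}Y_1^4))$ coming from $C_{1,1,4\beta}$. The conceptual heart of the argument is the product identification $W^{V_1}=_d W_1^*W_2^*$; once this is in hand, the remaining work is standard coupling and moment bookkeeping, with the main obstacle being the detailed arithmetic needed to extract the explicit constants $20.11$, $44.33$, and $2.02$.
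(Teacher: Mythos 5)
Your proposal takes essentially the same route as the paper: apply Theorem~\ref{jazzz} with $r=\sigma=1$, use the identification $W^{V_1}=W^{*(2)}=W_1^*W_2^*$ together with the Goldstein--Reinert sum construction $W_1^*=W_1-X_I/\sqrt{m}+X_I^*/\sqrt{m}$, then invoke (\ref{zezozr}) for Wasserstein and (\ref{dfgh1}) with Markov's inequality and the optimisation $\beta=(\mathbb{E}[(W-W^{V_1})^2])^{1/3}$ for Kolmogorov. Your conditional zero-bias verification that $W^{V_1}=_dW_1^*W_2^*$ is a nice self-contained check of a fact the paper simply cites to \cite{gaunt pn}.

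The one place your outline falls short of the stated result is the moment bookkeeping. The paper does not re-derive $\mathbb{E}|W-W^{*(2)}|$ and $\mathbb{E}[(W-W^{*(2)})^2]$; it cites the bounds $\tfrac{13}{8}\big(\tfrac{1}{\sqrt m}+\tfrac{1}{\sqrt n}\big)\mathbb{E}|X_1|^3\mathbb{E}|Y_1|^3$ and $\tfrac{20}{3}\big(\tfrac{1}{m}+\tfrac{1}{n}\big)\mathbb{E}X_1^4\mathbb{E}Y_1^4$ from Corollaries 4.1 and 4.2 of \cite{gaunt pn}, and it is exactly these constants that give $\tfrac{99}{8}\cdot\tfrac{13}{8}\approx 20.11$ and (with the $23.55$, $1.07$ in (\ref{plug7})) the $44.33$, $2.02$. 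The triangle-inequality chain you sketch, with $\mathbb{E}|W_1^*|\le 1+\tfrac{3}{2\sqrt m}\mathbb{E}|X_1|^3$ and $\tfrac{1}{\sqrt{mn}}\le\tfrac12\big(\tfrac{1}{\sqrt m}+\tfrac{1}{\sqrt n}\big)$, produces roughly $\tfrac{21}{8}$ in place of $\tfrac{13}{8}$, so it would give something closer to $32.5$ than $20.11$; the phrase ``careful tracking of constants'' is therefore doing real work that would need the sharper coupling analysis of \cite{gaunt pn}. Everything else in your outline is correct.
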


\begin{remark}The rate of convergence in Kolmogorov distance bound (\ref{jazzkol}) is unlikely to be of optimal order, but is better than the $O\big(m^{-\frac{1}{4}}\log(m)+n^{-\frac{1}{4}}\log(n)\big)$ rate that would result from simply applying Proposition \ref{prop1} to (\ref{jazzwas}).  A reasonable conjecture is that the optimal rate is $O(m^{-\frac{1}{2}}+n^{-\frac{1}{2}})$.
\end{remark}

\begin{proof}Since $Z\sim \mathrm{SVG}(1,1,0)$, we will apply Theorem \ref{jazzz} with $r=1$, for which $W^{V_1}=W^{*(2)}$, the $W$-zero bias transformation of order 2.  We begin by collecting some useful properties of this distributional transformation. In \cite{gaunt pn}, the following construction is given: $W^{*(2)}=\frac{1}{\sqrt{mn}}W_1^*W_2^*$.  Since $W_1$ and $W_2$ are sums of independent random variables, we have by part (v) of Lemma 2.1 of \cite{goldstein} that $W_1^*=W_1-\frac{X_I}{\sqrt{m}}+\frac{X_I^*}{\sqrt{m}}$ and $W_2^*=W_2-\frac{Y_J}{\sqrt{n}}+\frac{Y_J^*}{\sqrt{n}}$, where $I$ and $J$ are chosen uniformly from $\{1,\ldots,m\}$ and $\{1,\ldots,n\}$ respectively.  It was shown in the proofs of Corollaries 4.1 and 4.2 of \cite{gaunt pn} that
\begin{equation}\label{imjk}\mathbb{E}|W-W^{*(2)}|\leq \frac{13}{8}\bigg(\frac{1}{\sqrt{m}}+\frac{1}{\sqrt{n}}\bigg)\mathbb{E}|X_1|^3\mathbb{E}|Y_1|^3
\end{equation}
and, if $\mathbb{E}X_1^3=\mathbb{E}Y_1^3=0$,
\begin{equation}\label{plug6}\mathbb{E}[(W-W^{*(2)})^2]\leq \frac{20}{3}\bigg(\frac{1}{m}+\frac{1}{n}\bigg)\mathbb{E}X_1^4\mathbb{E}Y_1^4.
\end{equation}
The assumption $\mathbb{E}X_1^3=\mathbb{E}Y_1^3=0$ implies $\mathbb{E}X_1^*=\mathbb{E}Y_1^*=0$ (\cite{goldstein}, part (iv) of Lemma 2.1), which allowed \cite{gaunt pn} to obtain the $O(m^{-1}+n^{-1})$ rate in (\ref{plug6}).

The bound (\ref{jazzwas}) is immediate from (\ref{imjk}) and (\ref{zezozr}):
\begin{equation*}d_{\mathrm{W}}(W,Z)\leq \frac{99}{8}\mathbb{E}|W-W^{*(2)}|\leq \frac{1287}{64}\bigg(\frac{1}{\sqrt{m}}+\frac{1}{\sqrt{n}}\bigg)\mathbb{E}|X_1|^3\mathbb{E}|Y_1|^3,
\end{equation*}
and $\frac{1287}{64}=20.11$.

Now we prove (\ref{jazzkol}).  We begin by setting $r=\sigma=1$ in (\ref{dfgh1}), using that $\Gamma(\frac{1}{2})=\sqrt{\pi}$, and applying Markov's inequality to obtain
\begin{align*}d_{\mathrm{K}}(W,Z)&\leq\bigg\{5+\pi^{3/2}+\frac{10}{\pi}\bigg[1+\log\bigg(\frac{1}{2}\bigg)\bigg]\bigg\}\beta+\frac{10}{\pi}\log\bigg(\frac{1}{\beta}\bigg)+12\mathbb{P}(|W-W^{*(2)}|>\beta) \\
&\leq\bigg\{11.55+3.19\log\bigg(\frac{1}{\beta}\bigg)\bigg\}\beta +12\frac{\mathbb{E}[(W-W^{*(2)})^2]}{\beta^2}.
\end{align*}
Setting $\beta=\big(\mathbb{E}[(W-W^{*(2)})^2]\big)^\frac{1}{3}$ gives
\begin{equation}\label{plug7}d_{\mathrm{K}}(W,Z)\leq \bigg\{23.55+1.07\log\bigg(\frac{1}{\mathbb{E}[(W-W^{*(2)})^2]}\bigg)\bigg\}\big(\mathbb{E}[(W-W^{*(2)})^2]\big)^\frac{1}{3}.
\end{equation}
Substituting (\ref{plug6}) into (\ref{plug7}) and simplifying then yields (\ref{jazzkol}).
\end{proof}


\begin{theorem}\label{d2thm}Let $W$ be the standardised $D_2$ statistic, as defined in (\ref{cox apple}), based on 1-tuple content, for uniform i.i.d$.$ binary sequences of lengths $m$ and $n$. Let $Z\sim\mathrm{SVG}(1,1,0)$.  Then
\begin{eqnarray*}d_{\mathrm{W}}(W,Z)&\leq& 20.11\bigg(\frac{1}{\sqrt{m}}+\frac{1}{\sqrt{n}}\bigg), \\
d_{\mathrm{K}}(W,Z)&\leq& \bigg\{44.33+2.02\log\bigg(\frac{mn}{m+n}\bigg)\bigg\}\bigg(\frac{1}{m}+\frac{1}{n}\bigg)^{\frac{1}{3}}.
\end{eqnarray*}
\end{theorem}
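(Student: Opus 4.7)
The plan is to apply Theorem \ref{jazzz4} directly: the representation (\ref{cox apple}) already exhibits $W$ as a product $W_1W_2$ of two independent standardised sums, so it remains only to identify the iid summands, check the hypotheses, and substitute the resulting moment constants.

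First, I would represent $X\sim\mathrm{Bin}(m,\tfrac{1}{2})$ as $X=\sum_{i=1}^m B_i$ with $B_i$ iid $\mathrm{Bernoulli}(\tfrac{1}{2})$, and likewise $Y=\sum_{j=1}^n C_j$. Setting $X_i:=2B_i-1$ and $Y_j:=2C_j-1$ yields Rademacher variables taking the values $\pm 1$ with probability $\tfrac{1}{2}$, and one checks immediately that
\begin{equation*}
\frac{X-m/2}{\sqrt{m/4}}=\frac{1}{\sqrt{m}}\sum_{i=1}^m X_i,\qquad \frac{Y-n/2}{\sqrt{n/4}}=\frac{1}{\sqrt{n}}\sum_{j=1}^n Y_j,
\end{equation*}
so that $W$ in (\ref{cox apple}) coincides with $W_1W_2$ in the notation of Theorem \ref{jazzz4}, with the target $\mathrm{SVG}(1,1,0)$ law corresponding to $r=\sigma=1$.

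Second, the moment conditions needed are trivially verified from symmetry and $|X_i|\equiv 1$: one has $\mathbb{E}X_1=\mathbb{E}X_1^3=0$, $\mathbb{E}X_1^2=\mathbb{E}|X_1|^3=\mathbb{E}X_1^4=1$, and the same for $Y_1$. In particular $\mathbb{E}X_1^2=\mathbb{E}Y_1^2$ holds, all higher moments that appear in the statement of Theorem \ref{jazzz4} are finite, and the additional assumptions $\mathbb{E}X_1^3=\mathbb{E}Y_1^3=0$ required for the Kolmogorov bound are automatic.

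Finally, I would substitute. The Wasserstein bound follows from (\ref{jazzwas}) since $\mathbb{E}|X_1|^3\mathbb{E}|Y_1|^3=1$, while the Kolmogorov bound follows from (\ref{jazzkol}) with $\mathbb{E}X_1^4\mathbb{E}Y_1^4=1$, so that the $\log(1/(\mathbb{E}X_1^4\mathbb{E}Y_1^4))$ term vanishes and the prefactor $(\mathbb{E}X_1^4\mathbb{E}Y_1^4)^{1/3}$ is $1$. There is really no obstacle here: all the hard work (construction of the zero bias coupling of order $2$ for the product $W_1W_2$, the logarithmic term arising from the density singularity at the origin in the $r=1$ regime, and the bounds on the SVG Stein solution) has been absorbed into Theorem \ref{jazzz4} and its supporting results, so only a routine algebraic check of the Rademacher representation is required.
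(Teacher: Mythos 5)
Your proposal is correct and follows exactly the paper's own argument: decompose $X$ and $Y$ into independent Bernoulli summands, rescale to Rademacher variables $X_i=2\mathbb{I}_i-1$, $Y_j=2\mathbb{J}_j-1$, verify that $\mathbb{E}X_1=\mathbb{E}X_1^3=0$ and $\mathbb{E}|X_1|^3=\mathbb{E}X_1^4=1$ (and likewise for $Y_1$), and plug into Theorem \ref{jazzz4}. The only difference is notational.
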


\begin{proof}Let $\mathbb{I}_i$ and $\mathbb{J}_i$ be the indicator random variables that letter $0$ occurs at position $i$ in the first and second sequences, respectively.  Then $X=\sum_{i=1}^m\mathbb{I}_i$ and $Y=\sum_{j=1}^n\mathbb{J}_j$.  We may then write
\[W=\bigg(\frac{X-\frac{m}{2}}{\sqrt{\frac{m}{4}}}\bigg)\bigg(\frac{Y-\frac{n}{2}}{\sqrt{\frac{n}{4}}}\bigg)=\bigg(\frac{1}{\sqrt{m}}\sum_{i=1}^{m}X_i\bigg)\bigg(\frac{1}{\sqrt{n}}\sum_{j=1}^{n}Y_j\bigg),\]
where $X_i=2(\mathbb{I}_i-\frac{1}{2})$ and $Y_j=2(\mathbb{J}_j-\frac{1}{2})$.  The $X_i$ and $Y_j$ are all independent with zero mean and unit variance.  Also, $\mathbb{E}X_1^3=\mathbb{E}Y_1^3=0$, $\mathbb{E}|X_1|^3=\mathbb{E}|Y_1|^3=1$ and $\mathbb{E}X_1^4=\mathbb{E}Y_1^4=1$, and the result now follows from Theorem \ref{jazzz4}.  
\end{proof}

\subsection{Random sums}\label{sec5.4}

Let $X_1,X_2,\ldots$ be i.i.d$.$, positive, non-degenerate random variables with unit mean.  Let $N_p$ be a $\mathrm{Geo}(p)$ random variable with $\mathbb{P}(N_p=k)=p(1-p)^{k-1}$, $k\geq1$, that is independent of the $X_i$.  Then, a well-known result of \cite{renyi} states that $p\sum_{i=1}^{N_p}X_i$ converges in distribution to an exponential distribution with parameter 1 as $p\rightarrow0$.  Geometric summation does indeed arise in a variety of settings; see \cite{k97}.  Stein's method was used by \cite{pekoz1} to obtain a quantitative generalisation of the result of \cite{renyi}.  If we alter the assumptions so that the $X_i$ have mean zero and finite non-zero variance, then $p^{\frac{1}{2}}\sum_{i=1}^{N_p}X_i$ converges to a Laplace distribution as $p\rightarrow0$; see \cite{toda} and \cite{pike}.  Recently, \cite{pike}, through the use of the centered equilibrium transformation, mirrored the approach of \cite{pekoz1} to obtain an explicit error bound in the bounded Wasserstein metric.  

In this section, we use Theorem \ref{jazzz} to obtain Wasserstein and Kolmogorov error bounds for the theorems of \cite{pike}.  Indeed, Theorems \ref{thmnbc} and \ref{thm555} below give Wasserstein and Kolmogorov distance bounds for the approximations of Theorems 1.3 and 4.4 of \cite{pike}, respectively.  The results of \cite{pekoz1} are also given in these metrics, and we follow their approach to obtain our Kolmogorov bounds.   For a random
variable $X$, we denote by distribution function by $F_X$ and its generalized inverse by $F_X^{-1}$.




\begin{theorem}\label{thmnbc}Let $N$ be a positive, integer valued random variables with $\mu=\mathbb{E}N<\infty$ and let $X_1,X_2,\ldots$ be a sequence of independent random variables, independent of $N$, with $\mathbb{E}X_i=0$ and $\mathbb{E}X_i^2=\sigma_i^2\in(0,\infty)$.  Set $\sigma^2=\frac{1}{\mu}\mathbb{E}\big[\big(\sum_{i=1}^NX_i\big)^2\big]=\frac{1}{\mu}\mathbb{E}\big[\sum_{i=1}^N\sigma_i^2\big]$.  Also, let $M$ be any positive, integer valued random variable, independent of the $X_i$, satisfying
\begin{equation*}\mathbb{P}(M=m)=\frac{\sigma_m^2}{\mu\sigma^2}\mathbb{P}(N\geq m), \quad m=1,2,\ldots.
\end{equation*}
Let $Z\sim \mathrm{Laplace}(0,\frac{\sigma}{\sqrt{2}})$.  Then, with $W=\frac{1}{\sqrt{\mu}}\sum_{i=1}^NX_i$, we have
\begin{equation}\label{wedf}d_{\mathrm{W}}(W, Z)\leq 12\mu^{-\frac{1}{2}}\big\{\mathbb{E}|X_M-X_M^L|+\sup_{i\geq1}\sigma_i\mathbb{E}\big[|N-M|^{\frac{1}{2}}\big]\big\}.
\end{equation}
Suppose further that $|X_i|\leq C$ for all $i$ and $|N-M|\leq K$.  Then
\begin{equation}\label{wdv}
d_{\mathrm{K}}(W, Z)\leq \frac{17.04}{\sigma\sqrt{\mu}}\Big\{\sup_{i\geq1}\|F_{X_i}^{-1}-F_{X_i^L}^{-1}\|+CK\Big\};
\end{equation}
if $K=0$, the same bound also holds for unbounded $X_i$.
\end{theorem}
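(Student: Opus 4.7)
The plan is to recognize that $\mathrm{Laplace}(0,\sigma/\sqrt{2})=\mathrm{SVG}(2,\sigma/\sqrt{2},0)$, so that applying Theorem \ref{jazzz} with $r=2$ reduces both inequalities to the problem of coupling $W$ with its centered equilibrium transformation $W^{V_2}=W^L$. With the SVG scale parameter $\tilde\sigma=\sigma/\sqrt{2}$, inequality (\ref{zezozr}) specialises to $d_{\mathrm{W}}(W,Z)\leq \tfrac{9}{4}(5+\tfrac{1}{3})\mathbb{E}|W-W^L|=12\,\mathbb{E}|W-W^L|$, while direct evaluation of the constants in (\ref{dfgh1}) using $\Gamma(1)/\Gamma(3/2)=2/\sqrt{\pi}$ and $C_{2,\tilde\sigma,4\beta}=2\beta/\tilde\sigma=2\sqrt{2}\beta/\sigma$ yields
\[
d_{\mathrm{K}}(W,Z)\leq \sqrt{2}\bigl(\tfrac{17}{2}+2\sqrt{\pi}\bigr)\tfrac{\beta}{\sigma}+11\,\mathbb{P}(|W-W^L|>\beta),
\]
with $\sqrt{2}\bigl(\tfrac{17}{2}+2\sqrt{\pi}\bigr)<17.04$.

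Next, I would construct $W^L$ on the same probability space as $W$ following \cite{pike}, setting $W^L=\mu^{-1/2}\bigl(\sum_{i=1}^{M-1}X_i+X_M^L\bigr)$, where $X_M^L$ has the $X_M$-centered equilibrium distribution and $M$ is independent of the $X_i$ with the stated length-biased law. Verifying that this is indeed a $W$-centered equilibrium random variable amounts to substituting into the defining identity of Definition \ref{def123} with $r=2$, conditioning on $(N,M)$, and observing that the joint law of $(N,M)$ produces exactly the telescoping required; this is the content of Theorem 1.3 of \cite{pike}. Under this coupling, $W-W^L$ decomposes into $\mu^{-1/2}(X_M-X_M^L)$ plus a signed sum of $X_j$ over the indices lying between $M$ and $N$; the triangle inequality combined with the conditional Cauchy--Schwarz estimate $\mathbb{E}\bigl|\sum_{j=M+1}^{N}X_j\bigr|\leq \sup_i\sigma_i\,\mathbb{E}[|N-M|^{1/2}]$ (applied to the conditional second moment $\sum_{j=M+1}^N\sigma_j^2\leq|N-M|\sup_i\sigma_i^2$ and then taking expectation) delivers (\ref{wedf}).

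For the Kolmogorov bound (\ref{wdv}), I would refine the coupling of $X_i^L$ with $X_i$ via the shared quantile representation, setting $X_i=F_{X_i}^{-1}(U_i)$ and $X_i^L=F_{X_i^L}^{-1}(U_i)$ for a common $U_i\sim U(0,1)$, so that the deterministic bound $|X_i-X_i^L|\leq \|F_{X_i}^{-1}-F_{X_i^L}^{-1}\|$ holds almost surely. Under the additional assumptions $|X_i|\leq C$ and $|N-M|\leq K$, the residual sum over indices between $M$ and $N$ is bounded in absolute value by $CK$ almost surely, giving
\[
|W-W^L|\leq \mu^{-1/2}\Bigl\{\sup_{i\geq1}\|F_{X_i}^{-1}-F_{X_i^L}^{-1}\|+CK\Bigr\}=:\beta_\star
\]
almost surely. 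Taking $\beta=\beta_\star$ in the Kolmogorov bound from the first paragraph annihilates the probability term and leaves precisely $17.04\,\beta_\star/\sigma$. When $K=0$, we have $N=M$ almost surely, the residual sum vanishes entirely, and no boundedness of the $X_i$ is needed.

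The main technical obstacle is not the analytic manipulation above but the verification that the proposed construction of $W^L$ satisfies the characterising equation of Definition \ref{def123} in the random-sum setting; the specific length-biased law of $M$ is exactly what absorbs the randomness of $N$ into the Stein identity. This verification is already carried out in Theorem 1.3 of \cite{pike}, so the novel contribution here is simply to feed the resulting coupling into the new Wasserstein and Kolmogorov bounds of Theorem \ref{jazzz}, which were not available to \cite{pike} at the time that paper was written.
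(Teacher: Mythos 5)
Your proposal is correct and follows essentially the same route as the paper: recognise $\mathrm{Laplace}(0,\sigma/\sqrt2)=\mathrm{SVG}(2,\sigma/\sqrt2,0)$, apply Theorem \ref{jazzz} with $r=2$ (so $W^{V_2}=W^L$), construct $W^L=\mu^{-1/2}\bigl(\sum_{i=1}^{M-1}X_i+X_M^L\bigr)$ via the Pike construction, decompose $W^L-W$, bound the residual random sum by conditional Cauchy--Schwarz, and for Kolmogorov feed a coupling into (\ref{dfgh1}) with the same constant arithmetic (which you evaluate correctly, reproducing $\sqrt2(\tfrac{17}{2}+2\sqrt\pi)\approx17.03$). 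The one cosmetic difference is that where the paper obtains the a.s.\ bound $|X_i-X_i^L|\leq\|F_{X_i}^{-1}-F_{X_i^L}^{-1}\|$ by invoking Strassen's theorem, you construct the quantile coupling $X_i=F_{X_i}^{-1}(U_i)$, $X_i^L=F_{X_i^L}^{-1}(U_i)$ explicitly; these amount to the same thing, and your version has the minor virtue of being self-contained.
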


\begin{proof}Since $Z\sim \mathrm{Laplace}(0,\frac{\sigma}{\sqrt{2}})=_d\mathrm{SVG}(2,\frac{\sigma}{\sqrt{2}},0)$, we will apply Theorem \ref{jazzz} with $r=2$, for which $W^{V_1}=W^{L}$, the $W$-centered equilibrium distribution.  For $W$ as defined in the statement of the theorem, it was shown in the proof of Theorem 4.4 of \cite{pike} that $W^L=\mu^{-\frac{1}{2}}\big(\sum_{i=1}^{M-1}+X_M^L\big)$.  Then
\begin{equation*}W^L-W=\mu^{-\frac{1}{2}}\bigg\{(X_M^L-X_M)+\mathrm{sgn}(M-N)\sum_{i=(M\wedge N)+1}^{N\vee M}X_i\bigg\}.
\end{equation*}
Plugging this into (\ref{zezozr}) (with $r=2$) and bounding $\mathbb{E}\big|\sum_{i=(M\wedge N)+1}^{N\vee M}X_i\big|\leq \sup_{i\geq1}\sigma_i\mathbb{E}\big[|N-M|^{\frac{1}{2}}\big]$ (see the proof of Theorem 4.4 of \cite{pike}) yields (\ref{wedf}).  Now, using (\ref{dfgh1}) and the formulas $\Gamma(\frac{1}{2})=\sqrt{\pi}$ and $\Gamma(\frac{3}{2})=\frac{\sqrt{\pi}}{2}$ gives that
\begin{align}d_{\mathrm{K}}(W, Z)&\leq \bigg(\frac{7}{2}+2\sqrt{\pi}\bigg)\frac{\beta\sqrt{2}}{\sigma}+\frac{5}{2}\cdot\frac{2\sqrt{2}\beta}{\sigma}+11\mathbb{P}(|W-W^L|>\beta)\nonumber \\
\label{wdvb}&=17.04\frac{\beta}{\sigma}+11\mathbb{P}(|W-W^L|>\beta).
\end{align}
Letting $\beta=\mu^{-\frac{1}{2}}\big\{\sup_{i\geq1}\|F_{X_i}^{-1}-F_{X_i^L}^{-1}\|+CK\big\}$, and using Strassen's theorem we obtain (\ref{wdv}) from (\ref{wdvb}), and the remark after (\ref{wdv}) follows similarly.
\end{proof}

\begin{theorem}\label{thm555}Let $X_1,X_2,\ldots$ be a sequence of independent random variables with $\mathbb{E}X_i=0$, $\mathbb{E}X_i^2=\sigma^2$, and let $N\sim\mathrm{Geo}(p)$ be independent of the $X_i$.  Let $W=p^{\frac{1}{2}}\sum_{i=1}^N X_i$ and let $Z\sim \mathrm{Laplace}(0,\frac{\sigma}{\sqrt{2}})$.  Then
\begin{equation}\label{wedfg}d_{\mathrm{K}}(W, Z)\leq 17.04\frac{p^{\frac{1}{2}}}{\sigma}\sup_{i\geq1}\|F_{X_i}^{-1}-F_{X_i^L}^{-1}\|.
\end{equation}
If in addition $\rho=\sup_{i\geq1}\mathbb{E}|X_i|^3<\infty$, then
\begin{equation}\label{rwrwa}d_{\mathrm{W}}(W,Z)\leq 12p^{\frac{1}{2}}\bigg(\sigma+\frac{\rho}{3\sigma^2}\bigg).
\end{equation}
The $O(p^{\frac{1}{2}})$ rate in (\ref{rwrwa}) is optimal.
\end{theorem}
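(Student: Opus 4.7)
The plan is to recognize Theorem \ref{thm555} as the specialization of Theorem \ref{thmnbc} to $N\sim\mathrm{Geo}(p)$ with summands of common variance $\sigma_i^2\equiv\sigma^2$. My first move is to compute the auxiliary variable $M$ from Theorem \ref{thmnbc}: since $\mu=\mathbb{E}N=1/p$ and $\sigma_m^2/(\mu\sigma^2)=p$, we have
\[\mathbb{P}(M=m)=p\,\mathbb{P}(N\geq m)=p(1-p)^{m-1},\]
so $M\stackrel{d}{=}N\sim\mathrm{Geo}(p)$. I may therefore take the joint coupling $M=N$ almost surely, which gives $|N-M|\equiv 0$ and in particular $K=0$ in the notation of Theorem \ref{thmnbc}.

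With this coupling, both quantitative bounds follow from Theorem \ref{thmnbc} almost immediately. For \eqref{wedfg}, I apply \eqref{wdv} with $K=0$; the remark immediately following \eqref{wdv} states that no boundedness hypothesis on the $X_i$ is needed in this case, so substituting $\mu=1/p$ delivers \eqref{wedfg}. For \eqref{rwrwa}, I apply \eqref{wedf}: the $|N-M|^{1/2}$ term vanishes, leaving
\[d_{\mathrm{W}}(W,Z)\leq 12\,p^{1/2}\mathbb{E}|X_M-X_M^L|\leq 12\,p^{1/2}\sup_{i\geq 1}\bigl(\mathbb{E}|X_i|+\mathbb{E}|X_i^L|\bigr)\]
by the triangle inequality. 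The Cauchy--Schwarz inequality gives $\mathbb{E}|X_i|\leq\sigma$. For $\mathbb{E}|X_i^L|$, I use Proposition \ref{propsquare} to write $X_i^L\stackrel{d}{=}B_2X_i^*$ with $B_2\sim\mathrm{Beta}(2,1)$ independent of the zero-biased $X_i^*$; then $\mathbb{E}B_2=2/3$ and the standard zero-bias identity $\mathbb{E}|X_i^*|=\mathbb{E}|X_i|^3/(2\sigma^2)$ (obtained by setting $f(x)=x|x|/2$ in the defining equation of the zero bias) combine to give $\mathbb{E}|X_i^L|=\mathbb{E}|X_i|^3/(3\sigma^2)\leq\rho/(3\sigma^2)$. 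Assembling these pieces yields \eqref{rwrwa}.

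For the optimality of the $p^{1/2}$ rate in \eqref{rwrwa}, I would exhibit an explicit i.i.d.\ example with nonzero third cumulant. Taking for instance $X_i=1$ with probability $2/3$ and $X_i=-2$ with probability $1/3$ yields $\mathbb{E}X_i=0$, $\mathbb{E}X_i^2=2$, and $\mu_3:=\mathbb{E}X_i^3=-2$; all hypotheses of the theorem are satisfied. A conditioning-on-$N$ calculation (using $\mathbb{E}X_i=0$ to kill the mixed third-order terms) yields $\mathbb{E}W^3=p^{3/2}\mathbb{E}N\cdot\mu_3=p^{1/2}\mu_3$, while $\mathbb{E}Z^3=0$ by symmetry. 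To turn this $\Theta(p^{1/2})$ third-moment gap into a Wasserstein lower bound, I would introduce a $1$-Lipschitz cutoff $\psi_K$ agreeing with $x^3/(3K^2)$ on $[-K,K]$ and flat outside, so that $d_{\mathrm{W}}(W,Z)\geq|\mathbb{E}\psi_K(W)-\mathbb{E}\psi_K(Z)|$, and then choose $K$ as a mild power of $p^{-1}$ so that the third-moment term dominates the tail errors, which are controlled via $\mathbb{E}|W|^3<\infty$ (a Rosenthal-type bound on the random sum) and the exponential tails of $Z$. The main obstacle is precisely this optimality step: the upper bounds are essentially bookkeeping once the coupling $M=N$ is in hand, but turning a third-moment gap into a Lipschitz-test-function gap requires uniform-in-$p$ tail control on $W$, which is the most delicate part of the argument.
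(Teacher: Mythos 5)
Your derivation of the two quantitative bounds is essentially identical to the paper's: you correctly observe that $M\stackrel{d}{=}N$ under the geometric distribution, take the coupling $M=N$, specialize Theorem \ref{thmnbc} with $K=0$, and bound $\mathbb{E}|X_i^L|$ via the representation $X_i^L\stackrel{d}{=}B_2X_i^*$ together with the zero-bias identity $\mathbb{E}|X_i^*|=\mathbb{E}|X_i|^3/(2\sigma^2)$; this is the same calculation the paper attributes to \cite{pike}. The computation $\mathbb{E}B_2\cdot\mathbb{E}|X_i^*|=\tfrac{2}{3}\cdot\tfrac{\mathbb{E}|X_i|^3}{2\sigma^2}=\tfrac{\mathbb{E}|X_i|^3}{3\sigma^2}$ and the constant $12=\tfrac{9}{4}\bigl(5+\tfrac{1}{3}\bigr)$ from (\ref{zezozr}) at $r=2$ are both correct.

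The optimality argument is where you diverge from the paper, and here there is a genuine gap that you flag but do not close. The paper chooses $h(x)=\sin(tx)$ with $|t|\leq 1$, which is globally $1$-Lipschitz with no truncation needed, observes $\mathbb{E}\sin(tZ)=0$ by symmetry, and uses the explicit probability generating function of the geometric distribution to compute $\mathrm{Im}[\varphi_W(t)]=-\frac{p^{1/2}t^3\mathbb{E}X_1^3/6}{1+\sigma^2t^2/2}+O(p)$ exactly; this directly gives $d_{\mathrm{W}}(W,Z)\geq c\,p^{1/2}(1+o(1))$. Your proposed cubic-cutoff test function $\psi_K$ cannot reproduce this sharp rate: with a fixed truncation level $K$, the leading contribution $p^{1/2}\mu_3/(3K^2)$ is indeed of order $p^{1/2}$, but the boundary terms $\tfrac{K}{3}\bigl[\mathbb{P}(W>K)-\mathbb{P}(W<-K)\bigr]$ and $\tfrac{1}{3K^2}\mathbb{E}[W^3\mathbf{1}_{|W|>K}]$ are constants (independent of $p$) once $K$ is fixed, so they swamp the $p^{1/2}$ signal; and if you let $K=K(p)\to\infty$ to kill the tails (even with the uniform exponential tail control for $W$ that one can verify via the PGF), the main term degrades to $p^{1/2}/K(p)^2\ll p^{1/2}$. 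Chebyshev-type tail estimates ($\mathbb{P}(|W|>K)\leq\mathbb{E}W^4/K^4$) fare even worse, forcing $K\gg p^{-1/2}$ and a lower bound of order $p^{3/2}$. To make the cutoff idea work one would have to control not merely tail probabilities but the asymmetry in the tails of $\mathcal{L}(W)$ as $p\to 0$ --- which is exactly the information the paper extracts cleanly from the characteristic function. So the missing step you label ``the most delicate part'' is not a routine bookkeeping estimate; as stated, your route yields at best a lower bound of order $p^{1/2+\epsilon}$ or $p^{1/2}/\mathrm{polylog}(1/p)$, which does not establish that the $O(p^{1/2})$ rate is sharp. The paper's sine-function/PGF computation avoids this obstruction entirely and is the intended argument.
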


\begin{remark}\label{rety}

(i) Theorem 1.3 of \cite{pike} gives the bound
\begin{equation}\label{rwrwatt}d_{\mathrm{BW}}(W,Z)\leq p^{\frac{1}{2}}(2\sqrt{2}+\sigma)\bigg(\sigma+\frac{\rho}{3\sigma^2}\bigg).
\end{equation}
which holds under the same conditions as (\ref{rwrwa}).  Aside from being given in a stronger metric, the bound (\ref{rwrwa}) has a theoretical advantage of having a multiplicative constant, 12, which is independent of $\sigma$, whereas (\ref{rwrwatt}) has a multiplicative constant $2\sqrt{2}+\sigma$.  The bound (\ref{rwrwatt}) has a smaller constant than (\ref{rwrwa}) when $\sigma<12-2\sqrt{2}$, whilst the constant is larger when $\sigma>12-2\sqrt{2}$. 

(ii) The argument used to prove the final assertion of Theorem \ref{thm555} also shows that the $O(p^{\frac{1}{2}})$ rate in (\ref{rwrwatt}) is optimal.

(iii) Suppose now that $\tau=\sup_{i\geq1}\mathbb{E}X_i^4<\infty$.  Then arguing as we did in the proof of Theorem \ref{jazzz4} would result in the alternative bound
\begin{equation}\label{pirc1}d_{\mathrm{K}}(W,Z)\leq Cp^{\frac{1}{3}}(1+\tau),
\end{equation}
where $C>0$ does not depend on $p$.  Thus, the dependence on $p$ is worse than in (\ref{wedfg}), but (\ref{pirc1}) may be preferable if $\sup_{i\geq1}\|F_{X_i}^{-1}-F_{X_i^L}^{-1}\|$ is difficult to compute or large.  The same remark applies to Theorem \ref{thmnbc}.

The quantity $\sup_{i\geq1}\|F_{X_i}^{-1}-F_{X_i^L}^{-1}\|$ can be easily bounded if the $X_i$ have finite support.  To see this, suppose that $X_1,X_2,\ldots$ are supported on a subset of the finite interval $[a,b]\subset\mathbb{R}$.  Theorem 3.2 of \cite{pike} (see also Proposition \ref{propsquare}) gives that $X^L=_d B_2 X^*$, where $B_2\sim\mathrm{Beta}(2,1)$ and $X^*$, the $X$-zero bias distribution, are independent.  But part (ii) of Lemma 2.1 of \cite{goldstein} tells us that the support of $X^*$ is the closed convex hull of the support of $X$, and since $B_r$ is supported on $[0,1]$ it follows that $X^L$ is supported on $[a,b]$.  We therefore have the bound $\sup_{i\geq1}\|F_{X_i}^{-1}-F_{X_i^L}^{-1}\|\leq b-a$.  

\end{remark}

\begin{proof}As noted by \cite{pike}, the assumptions on $N$ and the $X_i$ imply that $\mathcal{L}(M)=\mathcal{L}(N)$, so we can take $M=N$.  Inequality (\ref{wedfg}) is now immediate from (\ref{wdv}).  To obtain (\ref{rwrwa}), we note the inequality (see \cite{pike}) 
\begin{equation*}\mathbb{E}|X_N-X_N^L|\leq \mathbb{E}|X_1|+\sup_{i\geq1}\mathbb{E}|X_i^L|=\mathbb{E}|X_1|+\sup_{i\geq1}\frac{\mathbb{E}|X_i|^3}{3\sigma^2}\leq\sigma+\frac{\rho}{3\sigma^2},
\end{equation*}
where we used the Cauchy-Schwarz inequality.  Inequality (\ref{rwrwa}) now follows from (\ref{wedf}).

Finally, we prove that the $O(p^{\frac{1}{2}})$ rate in (\ref{rwrwa}) is optimal.  Suppose, in addition to the assumptions in the statement of the theorem, that $X_1,X_2,\ldots$ are i.i.d$.$ with moments of all order and $\mathbb{E}X_1^3\not=0$.  Consider the test function $h(x)=\sin(tx)$, $|t|\leq1$, which is in the class $\mathcal{H}_{\mathrm{W}}$.  We have $\mathbb{E}\sin(tZ)=0$.  We now consider the characteristic function $\varphi_W(t)=\mathbb{E}[\mathrm{e}^{\mathrm{i}tW}]$, and note the relation $\mathbb{E}\sin(tW)=\mathrm{Im}[\varphi_W(t)]$.  From the above, we have that $d_{\mathrm{W}}(W,Z)\geq |\mathrm{Im}[\varphi_W(t)]|$.  Recall that the probability generating function of $N\sim \mathrm{Geo}(p)$ is given by $G_N(s)=\frac{ps}{1-(1-p)s}$, $s<-\log(1-p)$.  Then 
\begin{equation}\label{refds}\varphi_W(t)=G_N\big(\varphi_{X_1}(p^{\frac{1}{2}}t)\big)=\frac{p\varphi_{X_1}(p^{\frac{1}{2}}t)}{1-(1-p)\varphi_{X_1}(p^{\frac{1}{2}}t)}.
\end{equation}
Now, since $\mathbb{E}X_1=0$ and $\mathbb{E}X_1^2=\sigma^2$, as $p\rightarrow0$,
\begin{equation}\label{refgh}\varphi_{X_1}(p^{\frac{1}{2}}t)=1-\frac{1}{2}pt^2\sigma^2-\frac{1}{6}\mathrm{i}p^{\frac{3}{2}}t^3\mathbb{E}X_1^3 +O(p^2).
\end{equation}
Plugging (\ref{refgh}) into (\ref{refds}) and performing a simple asymptotic analysis using the formula $\frac{1}{1+z}=1-z+O(|z|^2)$, $|z|\rightarrow0$, gives that $\mathrm{Im}[\varphi_W(t)]=-\frac{\frac{1}{6}p^{1/2}t^3\mathbb{E}X_1^3}{1+\sigma^2t^2/2}+O(p)$, 
and so the  $O(p^{\frac{1}{2}})$ rate cannot be improved.
\end{proof}

\section{Further proofs}\label{sec6}

\noindent\emph{Proof of Proposition \ref{disclem}.} As usual, we set $\sigma=1$ and $\mu=0$.  The solution of the $\mathrm{SVG}(r,1,0)$ Stein equation with test function $h_z(x)=\mathbf{1}(x\leq z)$ is then
\begin{align}f_z(x)&=-\frac{K_\nu(|x|)}{|x|^\nu}\int_0^x|t|^\nu I_\nu(|t|)[\mathbf{1}(t\leq z)-\mathbb{P}(Z\leq z)]\,\mathrm{d}t\nonumber \\
\label{ksoln}&\quad-\frac{I_\nu(|x|)}{|x|^\nu}\int_x^\infty |t|^\nu K_\nu(|t|)[\mathbf{1}(t\leq z)-\mathbb{P}(Z\leq z)]\,\mathrm{d}t.
\end{align}
Setting $z=0$ and differentiating (\ref{ksoln}) using (\ref{diff11}) and (\ref{diff22}) gives that
\begin{align*}f_0'(x)&=\frac{K_{\nu+1}(|x|)}{|x|^\nu}\mathrm{sgn}(x)\int_0^x|t|^\nu I_\nu(|t|)[\mathbf{1}(t\leq 0)-\tfrac{1}{2}]\,\mathrm{d}t \\
&\quad-\frac{I_{\nu+1}(|x|)}{|x|^\nu}\mathrm{sgn}(x)\int_x^\infty |t|^\nu K_\nu(|t|)[\mathbf{1}(t\leq 0)-\tfrac{1}{2}]\,\mathrm{d}t.
\end{align*}

We now note that, for all $\nu>-\frac{1}{2}$,
\begin{equation*}\lim_{x\rightarrow0}\bigg[\frac{I_{\nu+1}(|x|)}{|x|^\nu}\int_x^\infty |t|^\nu K_\nu(|t|)[\mathbf{1}(t\leq 0)-\tfrac{1}{2}]\,\mathrm{d}t\bigg]=0,
\end{equation*}
due to the asymptotic formula (\ref{Itend0}) and the  fact that $|t|^\nu K_\nu(|t|)$ is a constant multiple of the $\mathrm{SVG}(r,1,0)$ density meaning that the integral is bounded for all $x\in\mathbb{R}$.  Then
\begin{eqnarray*}f_0'(0+)&=&-\lim_{x\downarrow0}\bigg[\frac{K_{\nu+1}(x)}{2x^\nu}\int_0^x t^\nu I_\nu(t)\,\mathrm{d}t\bigg], \\
f_0'(0-)&=&-\lim_{x\uparrow0}\bigg[\frac{K_{\nu+1}(-x)}{2(-x)^\nu}\int_0^x(-t)^\nu I_\nu(-t)\,\mathrm{d}t\bigg] \\
&=&\lim_{x\uparrow0}\bigg[\frac{K_{\nu+1}(-x)}{2(-x)^\nu}\int_0^{-x}u^\nu I_\nu(u)\,\mathrm{d}u\bigg].
\end{eqnarray*}
On using the asymptotic formulas (\ref{Itend0}) and (\ref{Ktend0}), we obtain $f_0'(0+)=-\frac{1}{2(2\nu+1)}$ and $f_0'(0-)=\frac{1}{2(2\nu+1)}$, which proves the assertion. \hfill $\Box$ 

\vspace{3mm}

\noindent\emph{Proof of Proposition \ref{ptpt}.} As usual, we set $\sigma=1$ and $\mu=0$.  Consider the test function $h(x)=\frac{\sin(ax)}{a}$, which is in the class $\mathcal{H}_{\mathrm{W}}$.  Therefore, if there was a general bound of the form $\|f^{(3)}\|\leq M_r\|h'\|$, then we would be able to find a constant $N_r>0$, independent of $a$, such that $\|f^{(3)}\|\leq N_r$. We shall show that $f^{(3)}(x)$ blows up as $x\rightarrow0$ for $a$ such that $ax\ll 1\ll a^2x$, meaning that such a bound cannot be obtained for $\|f^{(3)}\|$ which proves the proposition.  Before performing this analysis, we note that the second derivative $h''(x)=-a\sin(ax)$ blows up if $ax\ll 1\ll a^2 x$ (consider the expansion $\sin(t)=t+O(t^3)$, $t\rightarrow0$).  A bound of the form $\|f^{(3)}\|\leq M_{r,0}\|\tilde{h}\|+M_{r,1}\|h'\|+M_{r,2}\|h''\|$ is therefore still be possible, and we know from Section 3.1.7 of \cite{dgv15} that this is indeed the case.  

 Let $x>0$.  We first obtain a formula for $f^{(3)}(x)$.  To this end, we note that twice differentiating the representation (\ref{vgsolngeneral01}) of the solution and then simplifying using the differentiation formulas (\ref{diff11}) and (\ref{diff22}) followed by the Wronskian formula $I_\nu(x)K_{\nu+1}(x)+I_{\nu+1}(x)K_\nu(x)=\frac{1}{x}$ \cite{olver} gives that
\begin{align*}f''(x) &= \frac{\tilde{h}(x)}{x} -\bigg[\frac{\mathrm{d}^2}{\mathrm{d}x^2}\bigg(\frac{ K_{\nu}(x)}{x^{\nu}}\bigg)\bigg] \int_0^x t^{\nu} I_{\nu}(t)\tilde{h}(t)\,\mathrm{d}t -\bigg[\frac{\mathrm{d}^2}{\mathrm{d}x^2}\bigg(\frac{ I_{\nu }(x)}{x^{\nu}}\bigg)\bigg] \int_x^{\infty} t^{\nu} K_{\nu}(t)\tilde{h}(t)\,\mathrm{d}t.
\end{align*}
Differentiating this formulas then gives
\begin{align}f^{(3)}(x)&=\frac{h'(x)}{x}-\frac{\tilde{h}(x)}{x^2} -\bigg[\frac{\mathrm{d}^3}{\mathrm{d}x^3}\bigg(\frac{ K_{\nu}(x)}{x^{\nu}}\bigg)\bigg] \int_0^x t^{\nu} I_{\nu}(t)\tilde{h}(t)\,\mathrm{d}t+R_1\nonumber \\
&\quad+\tilde{h}(x)\bigg\{-x^{\nu}I_{\nu}(x)\frac{\mathrm{d}^2}{\mathrm{d}x^2}\bigg(\frac{ K_{\nu}(x)}{x^{\nu}}\bigg)+x^{\nu}K_{\nu}(x)\frac{\mathrm{d}^2}{\mathrm{d}x^2}\bigg(\frac{ I_{\nu}(x)}{x^{\nu}}\bigg)\bigg\} \nonumber\\
\label{ff33} &=\frac{h'(x)}{x}-\frac{(2\nu+2)\tilde{h}(x)}{x^2} -\bigg[\frac{\mathrm{d}^3}{\mathrm{d}x^3}\bigg(\frac{K_{\nu}(x)}{x^{\nu}}\bigg)\bigg] \int_0^x t^{\nu} I_{\nu}(t)\tilde{h}(t)\,\mathrm{d}t+R_1,
\end{align} 
where
\begin{align*}R_1= -\bigg[\frac{\mathrm{d}^3}{\mathrm{d}x^3}\bigg(\frac{I_{\nu }(x)}{x^{\nu}}\bigg)\bigg] \int_x^{\infty} t^{\nu} K_{\nu}(t)\tilde{h}(t)\,\mathrm{d}t.
\end{align*}
Here, to obtain equality (\ref{ff33}) we used differentiation formulas (\ref{diff11}) and (\ref{diff22}) followed again by the Wronskian formula.  For all $\nu>-\frac{1}{2}$ and $x>0$, we can use inequalities (\ref{difiineq}) and (\ref{rnmt1}) to bound $R_1$:
\begin{align*}|R_1|\leq \|\tilde{h}\|\bigg[\frac{\mathrm{d}^3}{\mathrm{d}x^3}\bigg(\frac{I_{\nu }(x)}{x^{\nu}}\bigg)\bigg] \int_x^{\infty} t^{\nu} K_{\nu}(t)\,\mathrm{d}t\leq \|\tilde{h}\|\frac{I_{\nu }(x)}{x^{\nu}} \int_x^{\infty} t^{\nu} K_{\nu}(t)\,\mathrm{d}t\leq \|\tilde{h}\|\frac{\sqrt{\pi}\Gamma(\nu+\frac{1}{2})}{2\Gamma(\nu+1)}.
\end{align*}
As $\|\tilde{h}\|\leq 2\|h\|=\frac{2}{a}$, the term $R_1$ does not explode when $a\rightarrow\infty$.



Applying integration by parts to (\ref{ff33}) we obtain
\begin{equation*}f^{(3)}(x)=\frac{h'(x)}{x}+\bigg[\frac{\mathrm{d}^3}{\mathrm{d}x^3}\bigg(\frac{K_{\nu}(x)}{x^{\nu}}\bigg)\bigg]\int_0^xh'(u)\int_0^u t^\nu I_\nu(t)\,\mathrm{d}t\,\mathrm{d}u+R_1+R_2,
\end{equation*}
where
\begin{equation}\label{r2two}R_2=-\tilde{h}(x)\bigg\{\frac{2\nu+2}{x^2}+ \bigg[\frac{\mathrm{d}^3}{\mathrm{d}x^3}\bigg(\frac{K_{\nu}(x)}{x^{\nu}}\bigg)\bigg]\int_0^x t^{\nu} I_{\nu}(t)\,\mathrm{d}t\bigg\}=:-\tilde{h}(x)A_\nu(x).
\end{equation}
For all $\nu>-\frac{1}{2}$, we show that there exists a constant $C_\nu>0$ independent of $x$ such that $A_\nu(x)\leq C_\nu$ for all $x>0$.  To see this, it suffices to consider the behaviour in the limits $x\downarrow0$ and $x\rightarrow\infty$. We first note that $A_\nu(x)\rightarrow0$ as $x\rightarrow\infty$, which follows from using the differentiation formula (\ref{dk3}) followed by (\ref{Ktendinfinity}) and the following limiting form (see \cite{gaunt asym}):
\[\int_0^x t^{\nu} I_{\nu}(t)\,\mathrm{d}t\sim \frac{1}{\sqrt{2\pi}}x^{\nu-1/2}\mathrm{e}^x, \quad x\rightarrow\infty,\:\nu>-\tfrac{1}{2}.\]
Also, using the differentiation formula (\ref{dk3}) followed by the limiting forms (\ref{Itend0}) and (\ref{Ktend0}) gives that, for $\nu>-\frac{1}{2}$, as $x\downarrow0$,
\begin{align*}&\bigg[\frac{\mathrm{d}^3}{\mathrm{d}x^3}\bigg(\frac{K_{\nu}(x)}{x^{\nu}}\bigg)\bigg]\int_0^x t^{\nu} I_{\nu}(t)\,\mathrm{d}t \\
&\quad=-\bigg(\frac{(2\nu+1)K_{\nu}(x)}{x^{\nu}}+\left(1+\frac{(2\nu+1)(2\nu+2)}{x^2}\right)\frac{K_{\nu+1}(x)}{x^{\nu}}\bigg)\int_0^x t^{\nu} I_{\nu}(t)\,\mathrm{d}t \\
&\quad=-\bigg(\frac{(2\nu+1)(2\nu+2)\cdot 2^\nu\Gamma(\nu+1)}{x^{2\nu+3}}+O(x^{-2\nu-1})\bigg)\int_0^x \frac{t^{2\nu}}{2^\nu\Gamma(\nu+1)}\,\mathrm{d}t \\
&\quad=-\frac{2\nu+2}{x^2}+O(1),
\end{align*}
and therefore $A_\nu(x)$ is bounded as $x\downarrow0$, as required.  We conclude that $R_2$ does not explode when $a\rightarrow\infty$.


Now, we use the differentiation formula (\ref{dk3}) to obtain
\begin{align*}f^{(3)}(x)&=\frac{h'(x)}{x}-\frac{(2\nu+1)(2\nu+2)K_{\nu+1}(x)}{x^{\nu+2}}\int_0^xh'(u)\int_0^u t^\nu I_\nu(t)\,\mathrm{d}t\,\mathrm{d}u\\
&\quad+R_1+R_2+R_3,
\end{align*} 
where
\begin{align}|R_3|&=\bigg|\bigg(\frac{(2\nu+1)K_\nu(x)}{x^\nu}+\frac{K_{\nu+1}(x)}{x^\nu}\bigg)\int_0^xh'(u)\int_0^u t^\nu I_\nu(t)\,\mathrm{d}t\,\mathrm{d}u\bigg|\nonumber \\
\label{r3three}&\leq \bigg(\frac{(2\nu+1)K_\nu(x)}{x^\nu}+\frac{K_{\nu+1}(x)}{x^\nu}\bigg)\cdot \frac{2(\nu+2)}{2\nu+1}x^\nu I_{\nu+2}(x),
\end{align}
where we used (\ref{gaunt14}) and that $\|h'\|=1$ to obtain the second inequality.  For $\nu>-\frac{1}{2}$, the expression involving modified Bessel functions in (\ref{r3three}) is uniformly bounded for all $x\geq0$, which can be seen from a straightforward analysis involving the asymptotic formulas (\ref{Itend0}) -- (\ref{Ktendinfinity}).  Therefore, the term $R_3$ does not explode when $a\rightarrow\infty$.

We now analyse the behaviour of $f^{(3)}(x)$ in a neighbourhood of $x=0$ when $a\rightarrow\infty$.  For all $x\geq0$, the terms $R_1$, $R_2$ and $R_3$ are $O(1)$ as $a\rightarrow\infty$.  Therefore using the asymptotic formulas (\ref{Itend0}) and (\ref{Ktend0}) we obtain
\begin{align*}f^{(3)}(x)&=-\frac{\cos(ax)}{x}+\frac{(2\nu+1)(2\nu+2)}{x^{\nu+2}}\cdot\frac{2^\nu \Gamma(\nu+1)}{x^{\nu+1}}\times \\
&\quad \times\int_0^x \cos(au)\int_0^u\frac{t^{2\nu}}{2^\nu\Gamma(\nu+1)}\,\mathrm{d}t\,\mathrm{d}u+O(1) \\
&=-\frac{\cos(ax)}{x}+\frac{(2\nu+1)(2\nu+2)}{x^{2\nu+3}}\!\int_0^x\! u^{2\nu+1}\cos(au)\,\mathrm{d}u+O(1), \:\: x\downarrow 0.
\end{align*}
In addition to $x\downarrow0$ and $a\rightarrow\infty$ we let $ax\downarrow0$.  Therefore on using that $\cos(t)=1-\frac{1}{2}t^2+O(t^4)$ as $t\downarrow0$, we have that, in this regime,
\begin{align*}f^{(3)}(x)&=-\frac{1}{x}\bigg(1-\frac{a^2x^2}{2}\bigg)+\frac{2\nu+2}{x^{2\nu+3}}\int_0^x u^{2\nu+1}\bigg(1-\frac{a^2u^2}{2}\bigg)\,\mathrm{d}u+O(1) \\
&=\frac{a^2x}{2}-\frac{(\nu+1)a^2x}{2\nu+4}+O(1) =\frac{a^2x}{2(\nu+2)}+O(1).
\end{align*}
If we take choose $a$ such that $ax\ll1\ll a^2x$, then $f^{(3)}(x)$ blows up in a neighbourhood of the origin, which proves the assertion. \hfill $\Box$

\vspace{3mm}

\noindent\emph{Proof of (\ref{sec6def}).} As usual, we set $\sigma=1$.  From the formula (\ref{vgsolngeneral01}) for the solution of the $\mathrm{SVG}(r,1,0)$ Stein equation we have
\begin{align*}\lim_{x\rightarrow\infty}xf(x)&=-\lim_{x\rightarrow\infty}\bigg\{\frac{K_{\nu}(x)}{x^\nu}\int_0^x t^\nu I_\nu(t)\tilde{h}(t)\,\mathrm{d}t -\frac{I_{\nu}(x)}{x^\nu}\int_x^\infty t^\nu K_\nu(t)\tilde{h}(t)\,\mathrm{d}t\bigg\}\\
&=:I_1+I_2.
\end{align*}
We shall use L'H\^{o}pital's rule to calculate $I_1$ and $I_2$.  In anticipation of this we note that
\[\frac{\mathrm{d}}{\mathrm{d}x}\bigg(\frac{x^{\nu-1}}{K_\nu(x)}\bigg)=\frac{\mathrm{d}}{\mathrm{d}x}\bigg(\frac{1}{x}\bigg/\frac{K_\nu(x)}{x^\nu}\bigg)=-\frac{x^{\nu-2}}{K_\nu(x)}+\frac{x^{\nu-1}K_{\nu+1}(x)}{K_\nu(x)^2},
\]
where we used the quotient rule and (\ref{diff22}) in the final step.  Similarly, on using (\ref{diff11}) we obtain 
\[\frac{\mathrm{d}}{\mathrm{d}x}\bigg(\frac{x^{\nu-1}}{I_\nu(x)}\bigg)=-\frac{x^{\nu-2}}{I_\nu(x)}-\frac{x^{\nu-1}I_{\nu+1}(x)}{I_\nu(x)^2}.\]
Therefore, by L'H\^{o}pital's rule,
\begin{align*}I_1&=-\lim_{x\rightarrow\infty}\left\{\frac{x^\nu I_\nu(x)\tilde{h}(x)}{\displaystyle-\frac{x^{\nu-2}}{K_\nu(x)}+\frac{x^{\nu-1}K_{\nu+1}(x)}{K_\nu(x)^2}}\right\}=-\frac{1}{2}\tilde{h}(\infty), \\
I_2&=-\lim_{x\rightarrow\infty}\left\{\frac{-x^\nu K_\nu(x)\tilde{h}(x)}{\displaystyle-\frac{x^{\nu-2}}{I_\nu(x)}-\frac{x^{\nu-1}I_{\nu+1}(x)}{I_\nu(x)^2}}\right\}=-\frac{1}{2}\tilde{h}(\infty),
\end{align*}
where we used the asymptotic formulas (\ref{roots}) and (\ref{Ktendinfinity}) to compute the limits.  Thus, $\lim_{x\rightarrow\infty}xf(x)=-\tilde{h}(\infty)$.  Similarly, by considering (\ref{vgsolngeneral11}) instead of (\ref{vgsolngeneral01}), we obtain $\lim_{x\rightarrow-\infty}xf(x)=\tilde{h}(-\infty)$. \hfill $\Box$

\vspace{3mm}

The following lemma will be used in the proof of Proposition \ref{prop1}.

\begin{lemma}\label{beslem}(i) Let $\nu>0$.  Then $x^\nu K_\nu(x)\leq 2^{\nu-1}\Gamma(\nu)$ for all $x>0$.

(ii) Suppose $0<x<0.729$.  Then $K_0(x)<-2\log(x)$.
\end{lemma}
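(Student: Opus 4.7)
For part (i), the plan is to combine two facts already recorded in the paper's appendix. First, $x^\nu K_\nu(x)$ is strictly decreasing on $(0,\infty)$ for every $\nu\in\mathbb{R}$ (this is the content of the derivative formula (\ref{ddbk}), which was already invoked in Section~\ref{sec2} to establish unimodality of the SVG density). Second, the small-argument asymptotic (\ref{Ktend0}) yields $K_\nu(x)\sim 2^{\nu-1}\Gamma(\nu)\,x^{-\nu}$ as $x\downarrow 0$, valid precisely because $\nu>0$. Hence
\[
x^\nu K_\nu(x)\;\le\;\lim_{y\downarrow 0} y^\nu K_\nu(y)\;=\;2^{\nu-1}\Gamma(\nu).
\]

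For part (ii), I would work with the auxiliary function $g(x):=-2\log x - K_0(x)$ and show it is strictly decreasing on $(0,\infty)$ with $g(0.729)\ge 0$; this is enough since then $g(x)>g(0.729)\ge 0$ on $(0,0.729)$. Monotonicity relies on the standard identity $K_0'(x)=-K_1(x)$ from (\ref{diff22}), giving $g'(x)=-2/x+K_1(x)$. The key analytic input is the bound $K_1(x)<1/x$ for all $x>0$, which I would derive from the integral representation $K_1(x)=x\int_1^\infty e^{-xt}\sqrt{t^2-1}\,\mathrm{d}t$: the substitution $u=xt$ gives
\[
xK_1(x)=\int_x^\infty e^{-u}\sqrt{u^2-x^2}\,\mathrm{d}u\;<\;\int_x^\infty u\,e^{-u}\,\mathrm{d}u\;<\;\int_0^\infty u\,e^{-u}\,\mathrm{d}u\;=\;1.
\]
Consequently $g'(x)<-1/x<0$, so $g$ is strictly decreasing.

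It remains to verify $g(0.729)\ge 0$, i.e.\ $K_0(0.729)\le -2\log(0.729)\approx 0.63216$. I would use the classical series
\[
K_0(x)=-(\gamma+\log(x/2))\,I_0(x)+\sum_{k\ge 1}\frac{(x/2)^{2k}}{(k!)^2}H_k,\qquad H_k=1+\tfrac12+\cdots+\tfrac1k,
\]
truncated at a suitable $k$ (three or four terms suffice) together with a geometric tail bound, to obtain $K_0(0.729)\le 0.6313$, which is strictly smaller than $0.6322$.

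The main obstacle is the tightness of the numerical verification at the endpoint: the two sides of the inequality agree to within about $10^{-3}$, so the constant $0.729$ is essentially the unique positive root of $K_0(x)+2\log x$. Consequently one cannot be cavalier with the truncated series, and a rigorous tail estimate (together with interval arithmetic for $\gamma$ and $\log(x/2)$) must accompany the computation. By contrast, the monotonicity step is robust because the inequality $K_1(x)<1/x$ holds with room to spare on the relevant interval.
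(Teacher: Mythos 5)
Your proof is correct and follows essentially the same route as the paper: part (i) via the monotonicity implied by (\ref{ddbk}) plus the small-argument limit from (\ref{Ktend0}), and part (ii) via monotonicity of $-2\log x - K_0(x)$ together with a numerical check at $x=0.729$. One small redundancy: your integral-representation derivation of $K_1(x)<1/x$ is unnecessary, since this is exactly part (i) with $\nu=1$ (the paper invokes part (i) directly here); on the other hand, your proposal to replace the paper's ``check numerically using Mathematica'' by a truncated series for $K_0$ with a rigorous tail bound is a genuine (if modest) improvement in rigour, and your remark that the two sides agree only to about $10^{-3}$ at the endpoint correctly identifies why care is needed there.
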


\begin{proof}(i) We have that $\frac{\mathrm{d}}{\mathrm{d}x}\big(x^\nu K_\nu(x)\big)=-x^\nu K_{\nu-1}(x)<0$ (see (\ref{ddbk})), which implies that $x^\nu K_\nu(x)$ is a decreasing function of $x$.  From (\ref{Ktend0}) we have $\lim_{x\downarrow 0}x^\nu K_\nu(x)=2^{\nu-1}\Gamma(\nu)$, and we thus deduce the inequality. 

(ii) From the differentiation formula (\ref{dkzero}), for all $x>0$, $\frac{\mathrm{d}}{\mathrm{d}x}\big(-2\log(x)-K_0(x)\big)=-\frac{2}{x}+K_1(x)<0$, where the inequality follows from part (i).  Therefore $-2\log(x)-K_0(x)$ is a decreasing of function of $x$.  But one can check numerically using Mathematica that $-2\log(0.729)-K_0(0.729)=0.00121$, and the conclusion follows.
\end{proof}

\noindent\emph{Proof of Proposition \ref{prop1}.} For ease of notation, we shall set $\mu=0$; the extension to general $\mu\in\mathbb{R}$ is obvious.  Throughout this proof, $Z$ will denote a $\mathrm{SVG}(r,\sigma,0)$ random variable.


(i) Let $r>1$.  Proposition 1.2 of \cite{ross} states that if a random variable $Y$ has Lebesgue density bounded by $C$, then for any random variable $W$,
\begin{equation}\label{subpr1}d_{\mathrm{K}}(W,Y)\leq\sqrt{2Cd_{\mathrm{W}}(W,Y)}.
\end{equation} 
Since the $\mathrm{SVG}(r,\sigma,0)$ distribution is unimodal about $0$, it follows from (\ref{pmutend}) that the density is bounded above by $C=\frac{1}{2\sigma\sqrt{\pi}}\Gamma(\frac{r-1}{2})/\Gamma(\frac{r}{2})$, which on substituting into (\ref{subpr1}) yields the desired bound.

(ii) Here we consider the case $r=1$. We begin by following the approach used in the proof of Proposition 1.2 of \cite{ross}, but we need to alter the argument because the $\mathrm{SVG}(1,\sigma,0)$ density $p(x)=\frac{1}{\pi\sigma}K_0\big(\frac{|x|}{\sigma}\big)$ is unbounded as $x\rightarrow0$.  Consider the functions $h_z(x)=\mathbf{1}(x\leq z)$, and the `smoothed' $h_{z,\alpha}(x)$ defined to be one for $x\leq z+2\alpha$, zero for $x>z$, and linear between.  Then
\begin{align}\mathbb{P}(W\leq z)-\mathbb{P}(Z\leq z)&=\mathbb{E}h_{z}(W)-\mathbb{E}h_{z,\alpha}(Z)+\mathbb{E}h_{z,\alpha}(Z)-\mathbb{E}h_z(Z)\nonumber \\
&\leq \mathbb{E}h_{z,\alpha}(W)-\mathbb{E}h_{z,\alpha}(Z)+\frac{1}{2}\mathbb{P}(z\leq Z\leq z+2\alpha)\nonumber \\
&\leq \frac{1}{2\alpha}d_{\mathrm{W}}(W,Z)+\frac{1}{2}\mathbb{P}(z\leq Z\leq z+2\alpha)\nonumber \\
\label{puting}&\leq \frac{1}{2\alpha}d_{\mathrm{W}}(W,Z)+\mathbb{P}(0\leq Z\leq \alpha),
\end{align}
where the last inequality follows because the $\mathrm{SVG}(1,\sigma,0)$ density is a decreasing function of $x$ for $x>0$ and an increasing function for $x<0$, and so $\mathbb{P}(z\leq Z\leq z+2\alpha)$ is maximised for $z=-\alpha$.  Suppose that $\frac{\alpha}{\sigma}<0.729$.  Then we can use Lemma \ref{beslem} to obtain
\begin{align}\mathbb{P}(0\leq Z<\alpha)&=\int_0^\alpha \frac{1}{\pi\sigma} K_0\bigg(\frac{t}{\sigma}\bigg)\,\mathrm{d}t =\frac{1}{\pi}\int_0^{\frac{\alpha}{\sigma}}K_0(y)\,\mathrm{d}y \nonumber\\
\label{uu2}&\leq \frac{1}{\pi}\int_0^{\frac{\alpha}{\sigma}}-2\log(y)\,\mathrm{d}y =\frac{2\alpha}{\pi\sigma}\bigg[1+\log\bigg(\frac{\sigma}{\alpha}\bigg)\bigg].
\end{align}  
Substituting into (\ref{puting}) gives that, for any $z\in\mathbb{R}$ and $\alpha>0$,
\begin{equation*}\mathbb{P}(W\leq z)-\mathbb{P}(Z\leq z)\leq \frac{1}{2\alpha}d_{\mathrm{W}}(W,Z)+\frac{2\alpha}{\pi\sigma}\bigg[1+\log\bigg(\frac{\sigma}{\alpha}\bigg)\bigg].
\end{equation*}
We take $\alpha=\frac{1}{2}\sqrt{\pi\sigma d_{\mathrm{W}}(W,Z)}$, which, as we assumed that $\sigma^{-1}d_{\mathrm{W}}(W,Z)<0.676$, ensures that $\frac{\alpha}{\sigma}<0.729$.  This leads to the upper bound
\begin{equation*}\mathbb{P}(W\leq z)-\mathbb{P}(Z\leq z)\leq\bigg\{2+\log\bigg(\frac{2}{\sqrt{\pi}}\bigg)+\frac{1}{2}\log\bigg(\frac{\sigma}{d_{\mathrm{W}}(W,Z)}\bigg)\bigg\}\sqrt{\frac{d_{\mathrm{W}}(W,Z)}{\pi\sigma}}.
\end{equation*}
Similarly, we can show that
\begin{equation*}\mathbb{P}(W\leq z)-\mathbb{P}(Z\leq z)\geq-\bigg\{2+\log\bigg(\frac{2}{\sqrt{\pi}}\bigg)+\frac{1}{2}\log\bigg(\frac{\sigma}{d_{\mathrm{W}}(W,Z)}\bigg)\bigg\}\sqrt{\frac{d_{\mathrm{W}}(W,Z)}{\pi\sigma}}.
\end{equation*}
Combining these bounds proves (\ref{pronf2}).

(iii) Let $0<r<1$. Then the $\mathrm{SVG}(r,\sigma,0)$ density is unbounded as $x\rightarrow0$ and is a decreasing function of $x$ for $x>0$ and an increasing function for $x<0$.  Therefore we argue as we did in part (ii) and bound $\mathbb{P}(0\leq Z\leq \alpha)$ and then substitute into (\ref{puting}).  Let $\nu=\frac{r-1}{2}$, so that $-\frac{1}{2}<\nu<0$.  We have
\begin{align}\mathbb{P}(0\leq Z\leq \alpha)&=\frac{1}{\sigma\sqrt{\pi}2^\nu\Gamma(\nu+\frac{1}{2})}\int_0^\alpha \bigg(\frac{t}{\sigma}\bigg)^\nu K_\nu\bigg(\frac{t}{\sigma}\bigg)\,\mathrm{d}t\nonumber \\
&=\frac{1}{\sqrt{\pi}2^\nu\Gamma(\nu+\frac{1}{2})}\int_0^{\frac{\alpha}{\sigma}} y^{2\nu}\cdot y^{-\nu} K_{-\nu}(y) \,\mathrm{d}y\nonumber \\
&\leq\frac{1}{\sqrt{\pi}2^\nu\Gamma(\nu+\frac{1}{2})}\int_0^{\frac{\alpha}{\sigma}}2^{-\nu-1}\Gamma(-\nu) y^{2\nu} \,\mathrm{d}y\nonumber \\
\label{uu3}&=\frac{\Gamma(-\nu)}{\sqrt{\pi}2^{2\nu+1}\Gamma(\nu+\frac{1}{2})}\frac{1}{2\nu+1}\bigg(\frac{\alpha}{\sigma}\bigg)^{2\nu+1}=C_{\nu,\sigma}\alpha^{2\nu+1},
\end{align}
where we used a change of variables and (\ref{kpart}) in the second step and Lemma \ref{beslem} in the third.  We therefore have that, for any $z\in\mathbb{R}$ and $\alpha>0$,
\begin{equation*}\mathbb{P}(W\leq z)-\mathbb{P}(Z\leq z)\leq \frac{1}{2\alpha}d_{\mathrm{W}}(W,Z)+C_{\nu,\sigma}\alpha^{2\nu+1}.
\end{equation*}
To optimise, we take $\alpha=\big(\frac{d_{\mathrm{W}}(W,Z)}{2(2\nu+1)C_{\nu,\sigma}}\big)^{\frac{1}{2(\nu+1)}}$, which results in the bound
\begin{align*}\mathbb{P}(W\leq z)-\mathbb{P}(Z\leq z)&\leq 2\big(2(2\nu+1)C_{\nu,\sigma}\big)^{\frac{1}{2(\nu+1)}}\big(d_{\mathrm{W}}(W,Z)\big)^{\frac{2\nu+1}{2(\nu+1)}}\\
&=2\bigg(\frac{2\Gamma(-\nu)}{\sqrt{\pi}(2\sigma)^{2\nu+1}\Gamma(\nu+\frac{1}{2})}\bigg)^{\frac{1}{2(\nu+1)}}\big(d_{\mathrm{W}}(W,Z)\big)^{\frac{2\nu+1}{2(\nu+1)}}.
\end{align*}
As in part (ii), we can similarly obtain a lower bound, and on substituting $\nu=\frac{r-1}{2}$ we obtain (\ref{pronf3}), which completes the proof.  \hfill $\Box$ 

\appendix

\section{Properties of modified Bessel functions}\label{appendix}

Here we list standard properties and inequalities for modified Bessel functions that are used throughout this paper.  All formulas can be found in \cite{olver}, except for the differentation fromulas (\ref{2ndii})--(\ref{dk3}), which can be found in \cite{gaunt thesis} and \cite{gaunt diff}, and the inequalities.

The \emph{modified Bessel function of the first kind} of order $\nu \in \mathbb{R}$ is defined, for $x\in\mathbb{R}$, by
\begin{equation*}\label{defI}I_{\nu} (x) = \sum_{k=0}^{\infty} \frac{1}{\Gamma(\nu +k+1) k!} \left( \frac{x}{2} \right)^{\nu +2k}.
\end{equation*}
The \emph{modified Bessel function of the second kind} of order $\nu\in\mathbb{R}$ is defined, for $x>0$, by
\begin{equation}\label{parity}K_\nu(x)=\int_0^\infty \mathrm{e}^{-x\cosh(t)}\cosh(\nu t)\,\mathrm{d}t.
\end{equation}
It is clear from (\ref{parity}) that
\begin{equation}\label{kpart}K_{-\nu}(x)=K_\nu(x).
\end{equation}
The modified Bessel functions have the following asymptotic behaviour:
\begin{eqnarray}\label{Itend0}I_{\nu} (x) &\sim& \frac{1}{\Gamma(\nu +1)} \left(\frac{x}{2}\right)^{\nu}\big(1+O(x^2)\big), \qquad x \downarrow 0, \\
\label{Ktend0}K_{\nu} (x) &\sim& \begin{cases} 2^{|\nu| -1} \Gamma (|\nu|) x^{-|\nu|}\big(1+O(x^2)\big), & \quad x \downarrow 0, \: \nu \not= 0, \\
-\log x, & \quad x \downarrow 0, \: \nu = 0, \end{cases} \\
 \label{roots} I_{\nu} (x) &\sim& \frac{\mathrm{e}^x}{\sqrt{2\pi x}}, \quad x \rightarrow \infty,  \\
\label{Ktendinfinity} K_{\nu} (x) &\sim& \sqrt{\frac{\pi}{2x}} \mathrm{e}^{-x}, \quad x \rightarrow \infty.
\end{eqnarray}
The following differentiation formulas hold:
\begin{eqnarray}
\label{dkzero}\frac{\mathrm{d}}{\mathrm{d}x}\big(K_0(x)\big)&=&-K_{1}(x), \\
\label{ddbk}\frac{\mathrm{d}}{\mathrm{d}x}\big(x^\nu K_\nu(x)\big)&=&-x^{\nu} K_{\nu-1}(x), \\
\label{diff11}\frac{\mathrm{d}}{\mathrm{d}x}\bigg(\frac{I_\nu(x)}{x^\nu}\bigg)&=&\frac{I_{\nu+1}(x)}{x^\nu}, \\
\label{diff22}\frac{\mathrm{d}}{\mathrm{d}x}\bigg(\frac{K_\nu(x)}{x^\nu}\bigg)&=&-\frac{K_{\nu+1}(x)}{x^\nu}, \\
\label{2ndkk}\frac{\mathrm{d}^2}{\mathrm{d}x^2}\left(\frac{I_{\nu}(x)}{x^{\nu}}\right)&=&\frac{I_{\nu}(x)}{x^{\nu}}-\frac{(2\nu+1)I_{\nu+1}(x)}{x^{\nu+1}},  \\
\label{2ndii}\frac{\mathrm{d}^2}{\mathrm{d}x^2}\left(\frac{K_{\nu}(x)}{x^{\nu}}\right)&=&\frac{K_{\nu}(x)}{x^{\nu}}+\frac{(2\nu+1)K_{\nu+1}(x)}{x^{\nu+1}}, \\
\label{dk3}\frac{\mathrm{d}^3}{\mathrm{d}x^3}\left(\frac{K_{\nu}(x)}{x^{\nu}}\right)&=&-\frac{(2\nu+1)K_{\nu}(x)}{x^{\nu}}-\left(1+\frac{(2\nu+1)(2\nu+2)}{x^2}\right)\frac{K_{\nu+1}(x)}{x^{\nu}}.
\end{eqnarray}
Applying the inequality $I_{\mu+1}(x)<I_{\mu}(x)$, $x>0$, $\mu>-\frac{1}{2}$ \cite{soni} to the sixth differentiation formula of Corollary 1 of \cite{gaunt diff} gives the inequality
\begin{equation}\label{difiineq}\frac{\mathrm{d}^3}{\mathrm{d}x^3}\left(\frac{I_{\nu}(x)}{x^{\nu}}\right)< \frac{I_{\nu}(x)}{x^\nu}, \quad x>0,\:\nu>-\tfrac{1}{2}.
\end{equation}
The next inequality follows from two applications of inequality (2.6) of \cite{gaunt ineq1}.  For $x\geq0$,
\begin{equation}\label{gaunt14}\int_0^x\!\int_0^u  t^\nu I_\nu(t)\,\mathrm{d}t\,\mathrm{d}u\leq \frac{2(\nu+2)}{2\nu+1}x^\nu I_{\nu+2}(x), \quad \nu>-\tfrac{1}{2}.
\end{equation}
The following bounds, which can be found in \cite{gaunt ineq2, gaunt ineq3}, are used to bound the solution to the SVG Stein equation.  Let $\nu>-\frac{1}{2}$.  Then, for all $x\geq 0$,
\begin{eqnarray}\label{propb2a12}\label{propb2a125}\frac{K_{\nu}(x)}{x^\nu}\int_0^x t^{\nu+1}I_\nu(t)\,\mathrm{d}t&<& \frac{1}{2}, \\
\label{fff1}\frac{I_{\nu}(x)}{x^\nu}\int_x^\infty t^{\nu+1}K_{\nu}(t)\,\mathrm{d}t&<& 1, \\
\label{rnmt2}\frac{K_\nu(x)}{x^\nu}\int_0^x t^\nu I_\nu(t)\,\mathrm{d}t&\leq& \frac{1}{2\nu+1},\\
\label{rnmt1}\frac{I_\nu(x)}{x^\nu}\int_x^\infty t^\nu K_\nu(t)\,\mathrm{d}t&\leq& \frac{\sqrt{\pi}\Gamma(\nu+\frac{1}{2})}{2\Gamma(\nu+1)},  \\
\label{jjj1}\frac{K_{\nu}(x)}{x^{\nu-1}}\int_0^x t^{\nu}I_\nu(t)\,\mathrm{d}t&<& \frac{\nu+1}{2\nu+1}, \\
\label{ddd2}\frac{I_{\nu}(x)}{x^{\nu-1}}\int_x^\infty t^{\nu}K_{\nu}(t)\,\mathrm{d}t&<& 1, \\
\label{jjj1z}\frac{K_{\nu+1}(x)}{x^{\nu-1}}\int_0^x t^{\nu}I_\nu(t)\,\mathrm{d}t&<& \frac{\nu+1}{2\nu+1}, \\
\label{ddd2z}\frac{I_{\nu+1}(x)}{x^{\nu-1}}\int_x^\infty t^{\nu}K_{\nu}(t)\,\mathrm{d}t&<& \frac{1}{2}.
\end{eqnarray}

\section*{Acknowledgements}
The author is supported by a Dame Kathleen Ollerenshaw Research Fellowship. The author would like to thank Ivan Nourdin for a helpful discussion.  The author would also like to thank the referee for a careful reading of the manuscript and their helpful comments.

\footnotesize


\begin{thebibliography}{99}

\bibitem{aaps17} Arras, B., Azmoodeh, E., Poly, G. and Swan, Y. A bound on the 2-Wasserstein distance between linear combinations of independent random variables. To appear in \emph{Stoch. Proc. Appl.}, 2018+.

\bibitem{bt17} Bai, S. and Taqqu, M.  Behavior of the generalized Rosenblatt process at extreme critical exponent values. \emph{Ann. Probab.} $\mathbf{45}$ (2017), pp. 1278--1324.

\bibitem{bhj92} Barbour, A. D., Holst, L. and Janson, S. \emph{Poisson Approximation}. Oxford University Press, Oxford, 1992.

\bibitem{bla} Blaisdell, B. A measure of the similarity of sets of sequences not requiring sequence alignment. \emph{Proc. Natl. Acad. Sci. USA} $\mathbf{83}$ (1986), pp. 5155--5159.

\bibitem{chatterjee} Chatterjee, S., Fulman, J. and  R\"ollin, A. Exponential approximation by Stein's method and spectral graph theory.
\emph{ALEA Lat. Am. J. Probab. Math. Stat.} $\mathbf{8}$ (2011), pp. 197--223.

\bibitem{chen} Chen, L. H. Y., Goldstein, L. and Shao, Q.--M.  \emph{Normal Approximation by Stein's Method.} Springer, 2011.

\bibitem{cr10} Chen, L. H. Y. and R\"{o}llin, A. Stein couplings for normal approximation. arXiv:1003:6039, 2010.

\bibitem{dobler} D\"{o}bler, C. Distributional transformations without orthogonality relations. \emph{J. Theoret. Probab.} $\mathbf{30}$ (2017), pp. 85--116.

\bibitem{dgv15} D\"{o}bler, C, Gaunt, R. E. and Vollmer, S. J.  An iterative technique for bounding derivatives of solutions of Stein equations.  \emph{Electron. J. Probab.} $\mathbf{22}$ no. 96 (2017), pp. 1--39.

\bibitem{eberlein} Eberlein, E. and Hammerstein E.  Generalized Hyperbolic and Inverse Gaussian Distributions: Limiting Cases and Approximation of Processes.  In:  Dalang, R. C. Dozzi, M., Russo, F. (Eds.), Seminar on Stochastic Analysis, Random
Fields and Applications IV, in: \emph{Progress in Probability} $\mathbf{58}$ Birkh\"{a}user Verlag, (2004), pp. 105--153.

\bibitem{eden2} Eden, R. and Viquez, J. Nourdin-Peccati analysis on Wiener and Wiener-Poisson space for general distributions. \emph{Stoch. Proc. Appl.} $\mathbf{125}$ (2015), pp. 182--216.

\bibitem{eichelsbacher} Eichelsbacher, P. and Th\"{a}le, C.  Malliavin-Stein method for Variance-Gamma approximation on Wiener space.  	\emph{Electron. J. Probab.} $\mathbf{20}$ no. 123 (2015), pp. 1--28.

\bibitem{elezovic} Elezovi\'c, N., Giordano, C. and Pe\v{c}ari\'c, J.  The best bounds in Gautschi's inequality. \emph{Math. Inequal. Appl.} $\mathbf{3}$ (2000), pp. 239--252.

\bibitem{finlay} Finlay, R. and Seneta, E.  Option pricing with VG-like models. \emph{Int. J. Theor. Appl. Finan.} $\mathbf{11}$ (2008), pp. 943--955.

\bibitem{gaunt thesis} Gaunt, R. E.  \emph{Rates of Convergence of Variance-Gamma Approximations via Stein's Method.}  DPhil thesis, University of Oxford, 2013.

\bibitem{gaunt vg} Gaunt, R. E.  Variance-Gamma approximation via Stein's method.  \emph{Electron. J. Probab.} $\mathbf{19}$ no. 38 (2014), pp. 1--33.

\bibitem{gaunt ineq1} Gaunt, R. E.  Inequalities for modified Bessel functions and their integrals.  \emph{J. Math. Anal. Appl.} $\mathbf{420}$ (2014), pp. 373--386.

\bibitem{gaunt ineq2} Gaunt, R. E.  Uniform bounds for expressions involving modified Bessel functions.  \emph{Math. Inequal. Appl.} $\mathbf{19}$ (2016), pp. 1003--1012.

\bibitem{gaunt pn} Gaunt, R. E. On Stein's method for products of normal random variables and zero bias couplings.  \emph{Bernoulli} $\mathbf{23}$ (2017), pp. 3311--3345.

\bibitem{gaunt diff} Gaunt, R. E. Derivative formulas for Bessel, Struve and Anger-Weber functions. \emph{J. Classical Anal.} $\mathbf{11}$ (2017), pp. 69-78.

\bibitem{gaunt ineq3} Gaunt, R. E. Inequalities for integrals of modified Bessel functions and expressions involving them. \emph{J. Math. Anal. Appl.} $\mathbf{462}$ (2018), pp. 172--190.

\bibitem{gaunt asym} Gaunt, R. E. Inequalities for some integrals involving modified Bessel functions. To appear in \emph{P. Am. Math. Soc.}, 2018+.

\bibitem{gaunt vgii} Gaunt, R. E. Wasserstein and Kolmogorov error bounds for variance-gamma approximation via Stein's method II. \emph{In preparation}, 2018+.

\bibitem{gaunt chi square} Gaunt, R. E., Pickett, A. M. and Reinert, G.  Chi-square approximation by Stein's method with application to Pearson's statistic.  \emph{Ann. Appl. Probab.} $\mathbf{27}$ (2017), pp. 720--756.

\bibitem{gaut} Gautschi, W. Some elementary inequalities relating to the gamma and incomplete gamma function. \emph{J. Math. Phys.} $\mathbf{38}$ (1959), pp. 77--81.

\bibitem{gio} Giordanoa, C. and Laforgia, A.  Inequalities and monotonicity properties for the gamma function. \emph{J. Comput. Appl. Math.} $\mathbf{133}$ (2001), pp. 387--396.

\bibitem{goldstein} Goldstein, L. and Reinert, G.  Stein's Method and the zero bias transformation with application to simple random sampling.  \emph{Ann. Appl. Probab.} $\mathbf{7}$ (1997), pp. 935--952.

\bibitem{k97} Kalashnikov, V. \emph{Geometric Sums: Bounds for Rare Events with Applications. Risk Analysis, Reliability, Queueing.}  Kluwer Academic Publishers Group, Dordrecht, 1997.

\bibitem{kkp01} Kotz, S., Kozubowski, T. J. and Podg\'{o}rski, K. \emph{The Laplace Distribution and Generalizations: A Revisit with New Applications.} Springer, 2001. 

\bibitem{ley} Ley, C., Reinert, G. and Swan, Y.  Stein's method for comparison of univariate distributions. \emph{Probab. Surv.} $\mathbf{14}$ (2017), pp. 1--52.

\bibitem{lippert} Lippert, R. A., Huang, H. and Waterman, M. S.  Distributional regimes for the number of $k$-word
matches between two random sequences.  \emph{P. Natl. Acad. Sci. USA}  $\mathbf{99}$ (2002), pp. 13980--13989.  

\bibitem{madan} Madan, D. B. and Seneta, E. The Variance Gamma (V.G.) Model for Share Market Returns.  \emph{J. Bus.} $\mathbf{63}$ (1990), pp. 511--524.

\bibitem{madan2} Madan, D. B., Carr, P. and Chang, E. C.  The variance gamma process and option pricing. \emph{Eur. Finance Rev.} $\mathbf{2}$ (1998), pp. 74--105.

\bibitem{np09} Nourdin, I. and Peccati, G. Stein's method on Wiener chaos.  \emph{Probab. Theory Rel.} $\mathbf{145}$ (2009), pp. 75--118.

\bibitem{np10} Nourdin, I. and Peccati, G. Cumulants on the Wiener space. \emph{J. Funct. Anal.} $\mathbf{258}$ (2010), pp. 3775--3791.

\bibitem{np12} Nourdin, I. and Peccati, G. Normal approximations with Malliavin calculus: from Stein's method to universality. Vol. 192. Cambridge University Press, 2012. 

\bibitem{np15} Nourdin, I. and Peccati, G. The optimal fourth moment theorem. \emph{P. Am. Math. Soc.} $\mathbf{143}$ (2015), pp. 3123--3133.

\bibitem{olver} Olver, F. W. J., Lozier, D. W., Boisvert, R. F. and Clark, C. W.  \emph{NIST Handbook of Mathematical Functions.} Cambridge University Press, 2010.

\bibitem{pekoz1} Pek\"oz, E. and R\"ollin, A. New rates for exponential approximation and the theorems of R\'{e}nyi and Yaglom. \emph{Ann. Probab.} $\mathbf{39}$ (2011), pp. 587--608.

\bibitem{prr13} Pek\"oz, E., R\"ollin, A. and Ross, N.  Total variation error bounds for geometric approximation. \emph{Bernoulli} $\mathbf{19}$ (2013), pp. 610--632.

\bibitem{pekoz} Pek\"oz, E., R\"ollin, A. and Ross, N. Degree asymptotics with rates for preferential attachment random graphs. \emph{Ann. Appl. Probab.} $\mathbf{23}$ (2013), pp. 1188--1218.

\bibitem{prr16} Pek\"oz, E., R\"ollin, A. and Ross, N. Generalized gamma approximation with rates for urns, walks and trees. \emph{Ann. Probab.} $\mathbf{44}$ (2016), pp. 1776--1816.

\bibitem{pike} Pike, J. and Ren, H. Stein's method and the Laplace distribution. \emph{ALEA Lat. Am. J. Probab. Math. Stat.} $\mathbf{11}$ (2014), pp. 571-587.

\bibitem{pr12} Pitman, J. and Ross, N. Archimedes, Gauss, and Stein. \emph{Not. Am. Math. Soc.} $\mathbf{59}$ (2012), pp. 1416--1421. 

\bibitem{waterman} Reinert, G., Chew, D., Sun, F. and Waterman, M. S.  Alignment free sequence comparison (I): statistics and power.  \emph{J. Comput. Biol.} $\mathbf{16}$ (2009),  pp. 1615--1634.

\bibitem{lothaire} Reinert, G., Schbath, S. and Waterman, M. S. in Lothaire, M.  \emph{Applied Combinatorics on Words.}  Cambridge University Press, 2005.

\bibitem{renyi} R\'{e}nyi, A. A characterization of Poisson processes. \emph{Magyar Tud. Akad. Mat. Kutat\'{o} Int. K\"{o}zl.} $\mathbf{1}$ (1957), pp. 519--527.

\bibitem{ross} Ross, N. Fundamentals of Stein's method.  \emph{Probab. Surv.} $\mathbf{8}$ (2011), pp. 210--293.

\bibitem{soni} Soni, R. P. On an inequality for modified Bessel functions. \emph{J. Math. Phys. Camb.} $\mathbf{44}$ (1965), pp. 406--407.

\bibitem{stein} Stein, C.  A bound for the error in the normal approximation to the the distribution of a sum of dependent random variables.  In \emph{Proc. Sixth Berkeley Symp. Math. Statis. Prob.} (1972), vol. 2, Univ. California Press, Berkeley, pp. 583--602.

\bibitem{stein2} Stein, C.  \emph{Approximate Computation of Expectations.} IMS, Hayward, California, 1986.

\bibitem{toda} Toda, A. A. Weak limit of the geometric sum of independent but not identically distributed random variables. arXiv:1111.1786, 2011.

\end{thebibliography}
\end{document}